\documentclass[11pt]{article}

\usepackage[
	a4paper,
	margin=1in
]{geometry}
\usepackage{setspace}

\usepackage{sectsty} 
\usepackage{titlesec}
\allsectionsfont{\boldmath} 

\usepackage{appendix}

\usepackage{amsmath, amssymb, amsthm}
\usepackage{mathtools}

\numberwithin{equation}{section}

\usepackage[
	setpagesize=false,
	colorlinks=true,
	linkcolor=BrickRed,
        citecolor=OliveGreen,
        urlcolor=black,
	pdfencoding=auto,
	psdextra,
]{hyperref}
\usepackage{cleveref}

\usepackage{etoolbox} 
\usepackage[shortlabels]{enumitem}
\usepackage{xparse} 
\usepackage{blkarray,multirow}

\usepackage{graphicx}
\usepackage[dvipsnames]{xcolor}
\usepackage{tikz}
\usetikzlibrary{patterns}
\usetikzlibrary{decorations.markings, arrows.meta}

\usepackage{todonotes}
\usepackage[
    deletedmarkup=sout,
    commentmarkup=uwave,
    authormarkuptext=name
]{changes}

\usepackage{comment}

\usepackage{tocloft}

\setlist[enumerate,1]{label=(\arabic*), ref=(\arabic*)}
\setlist[enumerate,3]{label=(\roman*), ref=(\roman*)}

\theoremstyle{plain}
\newtheorem{theorem}{Theorem}[section]
\newtheorem{lemma}[theorem]{Lemma}
\newtheorem{corollary}[theorem]{Corollary}
\newtheorem{proposition}[theorem]{Proposition}

\newtheorem{conjecture}[theorem]{Conjecture}
\newtheorem{problem}[theorem]{Problem}

\newtheorem{claim}[theorem]{Claim}
\newtheorem*{claim*}{Claim}

\makeatletter
\newenvironment{claimproof}[1][Proof]{\par
	\pushQED{\qed}%
	
	\normalfont \topsep6\p@\@plus6\p@\relax
	\trivlist
	\item[\hskip\labelsep
	\textit{#1}\@addpunct{.}~]\ignorespaces
}{%
	\popQED\endtrivlist\@endpefalse
}
\makeatother

\theoremstyle{definition}
\newtheorem{definition}[theorem]{Definition}
\newtheorem{remark}[theorem]{Remark}

\newcommand{\calC}{\mathcal{C}}
\newcommand{\calD}{\mathcal{D}}
\newcommand{\calF}{\mathcal{F}}

\newcommand{\calH}{\mathcal{H}}

\newcommand{\calP}{\mathcal{P}}
\newcommand{\calQ}{\mathcal{Q}}

\newcommand{\calW}{\mathcal{W}}
\newcommand{\calX}{\mathcal{X}}
\newcommand{\calY}{\mathcal{Y}}
\newcommand{\calZ}{\mathcal{Z}}

\newcommand{\ve}{\varepsilon}

\newcommand{\ori}[1]{\overrightarrow{#1}}
\newcommand{\dir}[1]{\overleftrightarrow{#1}}

\newcommand{\Aut}{\mathrm{Aut}}

\newcommand{\defeq}{\coloneqq}

\title{Kahn--Lov\'{a}sz-type inequalities for graph factors}

\author{
Hyunwoo Lee%
        \thanks{Department of Mathematical Sciences, KAIST, South Korea and Extremal Combinatorics and Probability Group (ECOPRO), Institute for Basic Science (IBS).
        E-mail: {\ttfamily hyunwoo.lee@kaist.ac.kr.} Supported by the National Research Foundation of Korea (NRF) grant funded by the Korea government(MSIT) No. RS-2023-00210430, and the Institute for Basic Science (IBS-R029-C4).}
}

\usepackage[square,sort,comma,numbers]{natbib}
\begin{document}
\maketitle

\begin{abstract}
    The Kahn--Lov\'{a}sz theorem gives a sharp upper bound on the number of perfect matchings in a graph in terms of its degree sequence, extending the classical Br\'{e}gman--Minc inequality for bipartite graphs. In this paper, we establish an asymptotically sharp extension of the Kahn--Lov\'{a}sz theorem to $F$-factors for every Hamiltonian graph $F$. As a consequence, we asymptotically determine the maximum number of $F$-factors in an $n$-vertex $m$-edge graph, yielding an $F$-factor analogue of Kruskal--Katona-type theorems.

    We also prove a multigraph analogue of the Kahn--Lov\'{a}sz theorem. Combining this with our results for Hamiltonian graphs, we obtain an asymptotically sharp Kruskal--Katona-type bound for a further class of connected graphs $F$, including those containing two vertex-disjoint cycles of equal length whose union spans $V(F)$.
\end{abstract}


\setlength{\cftbeforesecskip}{3pt}
\setlength{\cftbeforesubsecskip}{0pt}
\tableofcontents


\section{Introduction}\label{sec:intro}

The \emph{permanent} of an $n \times n$ matrix $A$, denoted by $\mathrm{per}(A)$, is a fundamental quantity in combinatorial matrix theory and in extremal and enumerative combinatorics. It is defined by
\begin{equation}\label{eq:perm-def}
    \mathrm{per}(A) \defeq \sum_{\sigma \in S_n} \prod_{i \in [n]} A_{i,\sigma(i)},
\end{equation} 
where $S_n$ is the symmetric group on $n$ elements.
The permanent of a $0$--$1$ matrix is of particular interest since it coincides with the number of perfect matchings in an associated bipartite graph. Let $G$ be a bipartite graph with bipartition $L \cup R$, where $L = \{\ell_1, \dots, \ell_n\}$ and $R = \{r_1, \dots, r_n\}$. Define the $0$--$1$ matrix $B(G)$ by setting $B(G)_{i, j} = 1$ if and only if $\ell_i r_j \in E(G)$ for all $i, j \in [n]$. 
Then by \eqref{eq:perm-def}, the permanent of $B(G)$ is precisely the number of perfect matchings in $G$. We call $B(G)$ a \emph{bipartite adjacency matrix} of $G$. 

This connection implies that estimating the number of perfect matchings in a bipartite graph is equivalent to estimating the permanent of its bipartite adjacency matrix. In general, designing an efficient algorithm for computing the exact value of the permanent is one of the central challenges in theoretical computer science. Indeed, even when restricted to $0$--$1$ matrices, the problem remains computationally intractable since Valiant~\cite{Valiant} proved that computing the permanent of a $0$--$1$ matrix is $\#P$-complete. Because computing the permanent exactly is difficult in general, considerable effort has been devoted to establishing general upper and lower bounds on the permanent.

In 1963, Minc~\cite{Minc} conjectured an upper bound on the permanent of a $0$--$1$ matrix with prescribed row sums. This conjecture was famously resolved by Br\'{e}gman~\cite{Bregman} in 1973. Since the permanent of a bipartite adjacency matrix counts the number of perfect matchings in the bipartite graph, the Br\'{e}gman--Minc inequality can be stated in the following graph-theoretic form. For graphs $F$ and $G$, we denote by $N_{\mathrm{factor}}(F; G)$ the number of distinct $F$-factors in $G$. Note that a $K_2$-factor is equivalent to a perfect matching.

\begin{theorem}[Br\'{e}gman--Minc inequality]\label{thm:bregman-minc}
    Let $G$ be a bipartite graph on bipartition $V(G) = L \cup R$ with $|L| = |R|$. Then we have
    $$
        N_{\mathrm{factor}}(K_2; G) \leq \prod_{v\in L} (d_G(v)!)^{\frac{1}{d_G(v)}},
    $$ where $d_G(v)$ is the degree of $v$ in $G$.
\end{theorem}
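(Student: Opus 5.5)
We prove the bound with the entropy method of Radhakrishnan, in the form that averages over a uniformly random ordering of $L$. If $G$ has no perfect matching there is nothing to prove, and any vertex of $L$ with $d_G(v)=0$ forces $N_{\mathrm{factor}}(K_2;G)=0$. So assume the set $\mathcal{M}$ of perfect matchings of $G$ is nonempty and let $M$ be uniform on $\mathcal{M}$. Then $H(M)=\log|\mathcal{M}|$. Since $M$ is determined by the partner function $\sigma\colon L\to R$, we may write $M=(X_v)_{v\in L}$, where $X_v$ is the partner of $v$. Fix an ordering $\tau$ of $L$. The chain rule gives
\[
\log|\mathcal{M}| = \sum_{v\in L} H\bigl(X_v \mid (X_u)_{u \prec_\tau v}\bigr).
\]
We then average this identity over a uniformly random ordering $\tau$, which is independent of $M$.

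Fix $v$, a matching $m\in\mathcal{M}$ and an ordering $\tau$. Let $N_v(m,\tau)$ be the number of neighbours of $v$ that are not matched in $m$ to any $u$ preceding $v$ in $\tau$. The value $X_v$ lies in this set, and the set is determined by the conditioning. Since entropy is at most the logarithm of the support size, and using concavity,
\[
H\bigl(X_v \mid (X_u)_{u\prec_\tau v}\bigr) \le \mathbb{E}_{M}\bigl[\log N_v(M,\tau)\bigr].
\]
Averaging over $\tau$ gives
\[
\log|\mathcal{M}| \le \sum_{v\in L} \mathbb{E}_{M}\,\mathbb{E}_\tau\bigl[\log N_v(M,\tau)\bigr].
\]

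The key step is to compute $\mathbb{E}_\tau[\log N_v(m,\tau)]$ for a fixed $m$. The $d=d_G(v)$ neighbours of $v$ are each matched in $m$ to distinct vertices of $L$, and one of these is $v$ itself. Hence $N_v(m,\tau)$ is one plus the number of the other $d-1$ partners of neighbours that come after $v$ in $\tau$. Under a uniform ordering, the relative position of $v$ among these $d$ vertices is uniform, so $N_v$ is uniform on $\{1,\dots,d\}$. Therefore
\[
\mathbb{E}_\tau[\log N_v] = \frac{1}{d}\sum_{k=1}^{d}\log k = \log (d!)^{1/d},
\]
which does not depend on $m$. Summing over $v\in L$ and exponentiating yields the theorem.

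The main obstacle is making sure this uniformity argument is correct. Two points need care:
\begin{itemize}
\item the partners in $m$ of the neighbours of $v$ must be distinct vertices of $L$, which holds because $m$ is a perfect matching;
\item the count must be taken over all $d$ neighbours, including those matched to vertices earlier in $\tau$, rather than only over $X_v$.
\end{itemize}
The interchange of the expectation over $\tau$ with the chain-rule identity is routine, since the identity holds for every fixed $\tau$.
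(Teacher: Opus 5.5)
Your proof is correct. It is Radhakrishnan's entropy proof of the Br\'egman--Minc inequality. The paper does not prove Theorem~\ref{thm:bregman-minc}: it quotes it as a black box, cites Radhakrishnan's argument, and adapts that same scheme (random order of exposure plus the chain rule) in its proof of Theorem~\ref{thm:multigraph-KL}.

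Where the paper uses continuous labels and then applies Jensen's inequality, you compute exactly that $N_v$ is uniform on $\{1,\dots,d_G(v)\}$, which is what gives the sharp factor $(d_G(v)!)^{1/d_G(v)}$. One small point: the conditional-entropy step needs no concavity. It only uses $H(X_v\mid (X_u)_{u\prec_\tau v}=(x_u))\le \log N_v$ for each value of the conditioning, followed by averaging.
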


As bipartite graphs and perfect matchings are fundamental objects in graph theory, Theorem~\ref{thm:bregman-minc} has numerous applications to bounding the number of large combinatorial structures, including balanced orientations of a graph~\cite{Schrijver-orientation}, directed Hamilton paths in a tournament~\cite{Alon-Ham-path}, and Hamilton decompositions of a graph~\cite{Count-Ham-decomp,Count-di-Ham-decomp}.

A natural extension of \Cref{thm:bregman-minc} is to allow the host graph to be non-bipartite. Kahn and Lov\'{a}sz proved the following extension, although their proof was not published; see~\cite{Kahn-Lovasz-ref}. Since then, the Kahn--Lov\'{a}sz theorem has been rediscovered and reproved several times~\cite{KL-1,KL-2,KL-3}.

\begin{theorem}[Kahn--Lov\'{a}sz]\label{thm:kahn-lovasz}
    Let $G$ be a graph. Then we have 
    $$
        N_{\mathrm{factor}}(K_2; G) \leq  \prod_{v\in V(G)} (d_G(v)!)^{\frac{1}{2 d_G(v)}}.
    $$
\end{theorem}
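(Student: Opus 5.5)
The plan is to reduce to the Br\'{e}gman--Minc inequality (Theorem~\ref{thm:bregman-minc}) through the bipartite double cover and a squaring trick. Let $M$ denote the set of perfect matchings of $G$. We may assume $M\neq\emptyset$, since otherwise there is nothing to prove. Then $|V(G)|=2n$ for some $n$.

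\textbf{Step 1: squaring.} Pairs $(M_1,M_2)\in M\times M$ are in bijection-like correspondence with spanning subgraphs $H=M_1\cup M_2$. Each component of such an $H$ is either a single edge (an edge lying in both matchings) or an even cycle of length at least $4$ whose edges alternate between $M_1$ and $M_2$. Given $H$, each even cycle admits exactly $2$ ways to assign its alternating classes to $(M_1,M_2)$. Hence
\[
|M|^2=\sum_{H} 2^{c(H)},
\]
where $H$ ranges over spanning subgraphs of $G$ whose components are edges or even cycles of length $\ge 4$, and $c(H)$ is the number of cycle components.

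\textbf{Step 2: comparison with the double cover.} Let $B=G\times K_2$ be the bipartite double cover, with $L=\{v^-\}$, $R=\{v^+\}$, and $v^-u^+\in E(B)$ if and only if $uv\in E(G)$. Perfect matchings of $B$ are exactly the permutations $\sigma$ of $V(G)$ with $v\sigma(v)\in E(G)$ for all $v$. Such $\sigma$ decompose into:
\begin{itemize}
\item $2$-cycles, which correspond to edges of $G$;
\item directed cycles of length $\ge 3$, each of which corresponds to a cycle of $G$ taken with one of $2$ orientations.
\end{itemize}
Restricting to those $\sigma$ all of whose cycles are even, each $H$ from Step~1 arises from exactly $2^{c(H)}$ permutations. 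Therefore
\[
|M|^2\le \#\{\text{perfect matchings of } B\}=\mathrm{per}(A_G),
\]
where $A_G$ is the adjacency matrix of $G$. The inequality holds because odd-cycle permutations only contribute extra, nonnegative terms.

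\textbf{Step 3: applying Br\'{e}gman--Minc.} Since $d_B(v^-)=d_G(v)$, Theorem~\ref{thm:bregman-minc} gives
\[
|M|^2\le \prod_{v}(d_G(v)!)^{1/d_G(v)}.
\]
Taking square roots yields the claim.

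The main point needing care is the bookkeeping in Steps 1--2: checking that the multiplicities $2^{c(H)}$ match exactly. Two cases deserve explicit attention. First, an edge counted in both matchings corresponds to a transposition, and this contributes a factor $1$ on both sides. Second, cycles of length $\ge 4$ contribute a factor $2$ on both sides. With this verified, the reduction to Theorem~\ref{thm:bregman-minc} is immediate.
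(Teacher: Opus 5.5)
Your proof is correct. It is the known Alon--Friedland argument: first show $N_{\mathrm{factor}}(K_2;G)^2\le\mathrm{per}(A_G)$, then apply Br\'egman--Minc to the bipartite double cover. Your multiplicity bookkeeping is exactly right: a factor $1$ for each common edge, which is a transposition, and a factor $2$ for each alternating cycle of length at least $4$, which is one of two orientations. Assuming a perfect matching exists also rules out isolated vertices, where $(0!)^{1/0}$ would be undefined.

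The paper never proves Theorem~\ref{thm:kahn-lovasz}. It quotes it as a known result, citing reproofs, and uses it only as motivation, so there is no in-paper proof to match. The closest argument in the paper is the entropy proof of the multigraph analogue, Theorem~\ref{thm:multigraph-KL}. That proof runs Radhakrishnan's random-ordering argument directly on the non-bipartite graph, with no reduction to the bipartite case.

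For a simple graph ($\mu_v=1$, $d_v=d_G(v)$), that entropy argument gives only $\exp\bigl(\sum_v 2/\sqrt{d_G(v)}\bigr)\prod_v (d_G(v)/e)^{1/2}$. This is weaker than your bound by a factor $\exp\bigl(O(1/\sqrt{d_G(v)})\bigr)$ per vertex. Its advantage is that it tolerates parallel edges with controlled loss.

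Your reduction gets the exact, sharp inequality essentially for free from Theorem~\ref{thm:bregman-minc}, but it depends on the $0$--$1$ structure. For a multigraph, your squaring step still shows that the square of the number of perfect matchings is at most the permanent of the multiplicity matrix. However, Br\'egman--Minc fails for integer matrices: the $1\times 1$ matrix with entry $2$ has permanent $2>\sqrt{2!}$. That is where the paper's entropy route is needed.
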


We remark that the inequalities in Theorems~\ref{thm:bregman-minc} and~\ref{thm:kahn-lovasz} are sharp, with equality attained when $G$ is a disjoint union of complete balanced bipartite graphs.

As a simple corollary, the celebrated Kruskal--Katona theorem~\cite{Kruskal,Katona} gives a sharp upper bound on the number of copies of $K_r$ in an $n$-vertex $m$-edge graph. For an integer $m$, let $u$ be the nonnegative real number satisfying $\binom{u}{2}=m$. Then every $n$-vertex $m$-edge graph contains at most $\binom{u}{r}$ copies of $K_r$, and this bound is sharp~\cite{Lovasz-KK}. Motivated by this, Alon~\cite{Alon-KK} determined, up to a multiplicative constant, the maximum number of copies of $F$ in an $m$-edge graph for every fixed graph $F$. Since then, various extensions of this problem under additional restrictions on the host graph have been studied; see, for instance,~\cite{Gerbner-Nagy-Patkos-Vizer,Kirsch-Radcliffe,Chakraborti-Chen,Chao-Yu}. We call a problem or theorem \emph{Kruskal--Katona-type} if it concerns the maximum number of copies of a specified subgraph, which need not be fixed or small, in a graph with a prescribed number of edges.

Combining \Cref{thm:kahn-lovasz} with Stirling's approximation and the AM-GM inequality yields the following Kruskal--Katona-type result for perfect matchings.

\begin{corollary}\label{cor:KK-kahn-lovasz}
    For all $\ve > 0$, there exists $C > 0$ such that for all even $n > 0$ and $m$ with $m \geq Cn$, we have
    $$
        \max_{\substack{G: |V(G)| = n, \\|E(G)| = m}} N_{\mathrm{factor}}(K_2; G) = \left[(1 \pm \ve)\left(\frac{2m}{e n} \right)\right]^{\frac{n}{2}},
    $$ where $e$ is Euler's number.
\end{corollary}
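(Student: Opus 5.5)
The proof has two halves: an upper bound from \Cref{thm:kahn-lovasz} combined with Stirling and a concavity argument, and a matching lower bound from an explicit construction. Throughout, let $d$ denote the average degree $2m/n$; since $m \geq Cn$, we have $d \geq 2C$, which is large.

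\textbf{Upper bound.} By \Cref{thm:kahn-lovasz}, $\log N_{\mathrm{factor}}(K_2;G) \leq \frac12\sum_v f(d_G(v))$, where $f(x) = \frac{1}{x}\log(x!)$ and $f(0)=0$.

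1. By Stirling, $\log(x!) = x\log(x/e) + \frac12\log(2\pi x) + o(1)$. Hence $f(x) = \log(x/e) + O\!\left(\frac{\log x}{x}\right)$ for $x \geq 1$.
2. The function $g(x) = \log(x/e)$ is concave, so Jensen gives $\frac1n\sum_v g(d_G(v)) \leq g(d)$.
3. The error terms need care, because low-degree vertices have errors that are not small relative to $g$. To handle this uniformly, I would show that $f(x) \leq \log\!\left((x+c)/e\right) + c'\frac{\log(x+2)}{x+2}$ for absolute constants $c, c'$, valid for all integers $x \geq 0$. Alternatively, I would show directly that $h(x) = \log((x+1)!)/(x+1)$ is concave, using log-convexity/Stirling-type estimates, and apply Jensen to it.
4. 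The simplest route is probably the cruder bound $f(x) \leq \log((x+1)/e) + \frac{\log(e\cdot 2\pi(x+1))}{2x}$. Vertices of degree at most $K$ then contribute only $O_K(1)$ each. The remaining vertices contribute at most $g(d_G(v)) + \delta$, with $\delta$ small once $K$ is large.
5. Combining via Jensen on $\log((x+1)/e)$, with the bounded low-degree contributions absorbed, I would aim for
\[
\tfrac1n\sum_v f(d_G(v)) \leq \log\!\left(\tfrac{d+1}{e}\right) + \delta + \tfrac{1}{n}\sum_{v:\,d_G(v)\leq K} O_K(1).
\]
6. The last term is the sticking point: it is not obviously small relative to $\log d$, because it is only $O_K(1)$ per low-degree vertex. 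The fix is to use that $f(x) \leq \log(x+1)$ is bounded by a concave function for all $x$ with an additive error of $O(1)$. Then $\frac1n\sum_v f \leq \log(d+1) - 1 + \eta(K) + O(1)\cdot(\text{fraction of vertices of degree} \leq K)$. Here I must show the total error is $o(\log d)$ in the exponent base. Since we need a multiplicative $(1\pm\ve)$ on $d/e$, an additive error $\ve$ in the logarithm suffices.
7. So the clean statement to prove is: for every $\ve$ there is $K$ such that $f(x) \leq \log\!\left((x+K)/e\right) + \ve$ for all $x \geq 0$. This holds because $f(x) - \log(x/e) \to 0$, and for small $x$ the shift by $K$ dominates.
8. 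Then Jensen gives $\log N \leq \frac n2\left[\log\!\left((d+K)/e\right) + \ve\right]$. Finally, $(d+K)/d \leq 1+\ve$ once $C \geq K/\ve$.

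\textbf{Lower bound.} I would build $G$ as a disjoint union of $\frac{n}{2t}$ copies of $K_{t,t}$, with $t \approx d/2$ chosen so that $t$ divides $n/2$. Each copy has $t!$ perfect matchings, giving $N = (t!)^{n/2t} \geq (t/e)^{n/2}$.

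1. This only reaches about $(d/2e)^{n/2}$, which is a factor of $2$ too small. So I would instead take disjoint cliques $K_{s}$ with $s \approx d+1$ and $s$ even. A clique $K_s$ has $(s-1)!! \approx (s/e)^{s/2}$ perfect matchings, so the product is about $(d/e)^{n/2}$.
2. Divisibility and the exact edge count are handled by letting one leftover block absorb the remainder, or by using cliques of two sizes $s$ and $s+2$. Leftover edges are added arbitrarily, which does not decrease $N$. Each adjustment changes the base by a factor of $1\pm\ve$ when $d$ is large.
3. The case $m$ close to $\binom n2$ needs $s \leq n$, which is fine since the base then is $\approx n/e$.

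I expect the main obstacle to be the uniform error estimate in step 7 of the upper bound, because low-degree vertices break a naive Stirling approximation.
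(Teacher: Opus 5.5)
Your upper bound is the paper's argument. Step~7 is exactly Proposition~\ref{prop:factorial-linear} in logarithmic form, and Jensen for $\log$ on $\prod_v (d_G(v)+K)$ is the AM--GM step the paper combines with Theorem~\ref{thm:kahn-lovasz}. Steps~3--6 can simply be deleted.

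The gap is in the lower bound, which the statement asserts for \emph{every} $m$ with $Cn\le m\le\binom{n}{2}$. Your disjoint-clique construction works only when $d=2m/n\le c(\ve)n$. In that range a single absorbing clique of size less than $2s$ costs $O(s^2)=o(nd)$ extra edges, which you pay for by taking $s$ slightly below $d$. When $d=\Theta(n)$ the leftover block has linear size, and your claim that each adjustment changes the base only by a factor $1\pm\ve$ is false. For example, if $d=0.6n$:
\begin{enumerate}
\item[$\bullet$] $\lfloor n/s\rfloor=1$, so the two-size trick is unavailable;
\item[$\bullet$] absorbing the remainder produces $K_n$, which has too many edges;
\item[$\bullet$] $K_s\cup K_{n-s}$ has about $(0.6n/e)^{0.3n}(0.4n/e)^{0.2n}=(2/3)^{0.2n}(d/e)^{n/2}$ perfect matchings, an exponential loss;
\item[$\bullet$] the leftover edges, added arbitrarily, give no guaranteed gain.
\end{enumerate}
No choice of clique sizes repairs this. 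At most one clique has more than $n/2$ vertices, so at least about $0.2n$ vertices lie in cliques of size at most $\frac56 d$. Meanwhile the vertex-average clique size is at most $d+1$, so strict concavity of $\log$ costs a factor $e^{-cn}$. Your remark about $m$ near $\binom{n}{2}$ does not reach this range either. Already for $m=\binom{n}{2}-\ve n^2$ with $\ve$ small, the leftover clique of size about $2\ve n$ costs roughly $(2\ve)^{\ve n}$, which is more than $(1-\ve)^{n/2}$ allows.

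What is missing is a dense construction that is not a union of cliques. The simplest argument covering all $m$ at once is a first-moment count. Let $G$ be a uniformly random graph on $[n]$ with exactly $m$ edges. Each of the $(n-1)!!\ge(n/e)^{n/2}$ perfect matchings of $K_n$ lies in $G$ with probability $\prod_{i=0}^{n/2-1}\frac{m-i}{\binom{n}{2}-i}\ge\bigl(\frac{2(m-n/2)}{n^2}\bigr)^{n/2}$. Hence $\mathbb{E}[N_{\mathrm{factor}}(K_2;G)]\ge\bigl((1-\frac{1}{2C})\frac{2m}{en}\bigr)^{n/2}$, and some $n$-vertex $m$-edge graph attains this. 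The paper's one-line justification only yields the upper bound, and its disjoint-clique sharpness examples in Section~\ref{sec:lowerbounds} are stated for $m=o(n^2)$. So the dense range needs exactly this kind of extra argument.

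Separately, your dismissal of $\frac{n}{2t}\cdot K_{t,t}$ rests on an arithmetic slip. Every vertex there has degree $t$, so $t=2m/n$ exactly, and $(t!)^{n/(2t)}\ge(d/e)^{n/2}$ is already tight. This is the equality case of Theorem~\ref{thm:kahn-lovasz} that the paper cites, although it has the same limitation for $m=\Theta(n^2)$.
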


In this paper, we extend the Kahn--Lov\'asz theorem to $F$-factors and establish Kruskal--Katona-type results for $F$-factors. As an application, we obtain nontrivial upper bounds on the number of distinct cycle decompositions in terms of the degree sequence of the host graph. These bounds are asymptotically sharp for many graphs.


\subsection{Main results}\label{subsec:Results}

Our first main result extends \Cref{thm:kahn-lovasz} to $F$-factors for several classes of graphs $F$. In particular, we obtain an asymptotically sharp result for every Hamiltonian graph $F$.

\begin{theorem}\label{thm:main-factors}
    For every $\ve > 0$ and a Hamiltonian graph $F$, there exists a constant $C = C(F, \ve) > 0$ such that the following holds. For all $n$-vertex graph $G$, we have
    \begin{equation}\label{eq:main-equation}
        N_{\mathrm{factor}}(F; G) \leq \left( (1  + \ve) \frac{|V(F)|}{|\mathrm{Aut}(F)|} \right)^{\frac{n}{|V(F)|}} \cdot \left( \prod_{v\in V(G)} \frac{d_G(v) + C}{e} \right)^{1 - \frac{1}{|V(F)|}},
    \end{equation} where $\mathrm{Aut}(F)$ denotes the automorphism group of $F$.
\end{theorem}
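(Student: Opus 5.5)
The plan is to reduce $F$-factors to $C_k$-factors, where $k\defeq|V(F)|$, and then count $C_k$-factors with a Br\'egman-type entropy argument in the style of Radhakrishnan. I expect the constant $1+\ve$ to be needed only to absorb lower-order losses; the computation sketched below suggests $C=k+1$ with no $\ve$-loss, but this is unchecked.

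\textbf{Step 1 (reduction to cycle factors).} Let $h(F)\ge1$ be the number of Hamilton cycles of $F$. Double count pairs consisting of a copy of $F$ on a fixed $k$-set $S$ and a Hamilton cycle of that copy. There are $\frac{k!}{|\Aut(F)|}h(F)$ such pairs, and there are $\frac{(k-1)!}{2}$ Hamilton cycles on $S$. By symmetry, each fixed $k$-cycle on $S$ lies in exactly $x\defeq \frac{2k\,h(F)}{|\Aut(F)|}$ copies of $F$ on $S$. Every $F$-factor of $G$ contains exactly $h(F)^{n/k}$ $C_k$-factors obtained by choosing a Hamilton cycle in each copy. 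Every $C_k$-factor is contained in at most $x^{n/k}$ $F$-factors. Hence
$$N_{\mathrm{factor}}(F;G)\le \Big(\tfrac{2k}{|\Aut(F)|}\Big)^{n/k} N_{\mathrm{factor}}(C_k;G).$$
It therefore suffices to show
$$N_{\mathrm{factor}}(C_k;G)\le \Big(\tfrac{1+\ve}{2}\Big)^{n/k}\prod_v\Big(\tfrac{d_G(v)+C}{e}\Big)^{1-1/k}.$$
For $k\ge3$, each $C_k$-factor has exactly $2^{n/k}$ orientations. It is therefore enough to bound the number $N$ of permutations $\sigma$ of $V(G)$ with $\sigma(v)\in N_G(v)$ for all $v$ and all cycles of length exactly $k$ by $(1+\ve)^{n/k}\prod_v((d_v+C)/e)^{1-1/k}$. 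The case $k=2$ is exactly \Cref{thm:kahn-lovasz} together with $(d!)^{1/d}\le (d+C)/e$.

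\textbf{Step 2 (entropy with a cycle-adapted revealing order).} Let $\sigma$ be uniform among these permutations, so that $\log N=H(\sigma)$. Independently of $\sigma$, give each vertex $v$ a uniform time $t_v\in[0,1]$. Each cycle $Z$ of $\sigma$ gets time $\tau_Z=\min_{v\in Z}t_v$ and start $s_Z=\arg\min$. Reveal the cycles in increasing order of $\tau_Z$. Within a cycle, reveal $\sigma(s_Z),\sigma^2(s_Z),\dots$ along the cycle. Then the value $\sigma(v)$ for the unique $v\in Z$ with $\sigma(v)=s_Z$ is forced by the revealed data once the $k-1$ earlier steps are known. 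By the chain rule for each fixed ordering, averaged over the times,
$$H(\sigma)\le \sum_v \mathbb{E}\big[\mathbf 1[v\text{ not forced}]\cdot\log A_v\big],$$
where $A_v$ is the number of still-available values for $\sigma(v)$: neighbours of $v$ that are not yet images. As in Radhakrishnan's proof, the standard trick (using $\log$ of the support size and then Jensen) lets me replace $A_v$ by its conditional expectation given $\sigma$ and $t_v$'s cycle time.

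\textbf{Step 3 (the computation).} Fix $\sigma$ and $v$ in cycle $Z$. By symmetry of i.i.d.\ times, the identity of the argmin is independent of $\tau_Z$. So the event that $v$ is not forced has probability $1-1/k$ and is independent of $\tau_Z$, whose density is $k(1-t)^{k-1}$. A neighbour $w\in N(v)$ is still available only if $w=\sigma(v)$, or $w$ lies in $Z$ (at most $k$ such $w$), or $w$'s cycle has time greater than $\tau_Z$. The last event has probability $(1-t)^k$ given $\tau_Z=t$, independently of $\tau_Z$. Thus $\mathbb E[A_v\mid \tau_Z=t]\le k+1+d_v(1-t)^k$, and by Jensen
$$\mathbb E[\log A_v]\le \int_0^1 k(1-t)^{k-1}\log\big((d_v+k+1)(1-t)^k\big)\,dt+o(1)=\log\frac{d_v+C}{e},$$
using $\int_0^1 k(1-t)^{k-1}\cdot k\log(1-t)\,dt=-1$. (The additive $k+1$ must be absorbed carefully near $t\to1$, which is where $C$ and possibly the $1+\ve$ enter.) Summing gives $H(\sigma)\le(1-\tfrac1k)\sum_v\log\frac{d_v+C}{e}$.

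\textbf{Main obstacle.} The delicate point is Step 3: the conditioning. The identity of the forced vertex and the availability of neighbours are defined through $\sigma$ itself. One must check that the chain-rule bound with a $\sigma$-dependent, but time-randomized, revealing order is legitimate, as in the Radhakrishnan argument, and handle the additive error from the at most $k$ vertices of $Z$ and $\sigma(v)$ without losing more than the constant $C$. I would first try to mirror the proof of Br\'egman's theorem via entropy verbatim, treating whole cycles as the revealed units.
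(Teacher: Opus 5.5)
Your proposal is correct. Step 1 is the paper's Lemma~\ref{lem:aut-double-counting} specialised to $H=\frac{n}{k}\cdot C_k$, but the core bound on cycle factors follows a genuinely different route.

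\textbf{Comparison with the paper.} The paper never runs an entropy argument on cycle factors. It proceeds as follows:
\begin{itemize}
\item it fixes an equipartition $(V_0,\dots,V_{k-1})$;
\item it bounds the embeddings respecting that partition by products of perfect matchings between consecutive classes, using Br\'egman--Minc as a black box;
\item it takes the geometric mean over which consecutive pair is omitted, which is where $1-\frac1k$ comes from;
\item it applies H\"older over partitions;
\item it removes the partition by counting out-degree-one subdigraphs weighted by $k^{|\mathrm{Comp}(D)|}$ (Lemmas~\ref{lem:partition-cycle-sum} and~\ref{lem:exponential-count}).
\end{itemize}
You instead run Radhakrishnan's argument directly on $\sigma$, revealing whole cycles. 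The exponent $1-\frac1k$ appears as the probability that a vertex is not the forced one, and $1/e$ appears as $\int_0^1\log u\,du$ after substituting $u=(1-t)^k$.

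Your route is shorter and avoids Br\'egman--Minc, H\"older and Lemma~\ref{lem:exponential-count}. It gives an explicit per-vertex bound, and is close in spirit to the paper's own proof of Theorem~\ref{thm:multigraph-KL}. The paper's partition framework is more modular: it transfers essentially verbatim to spanning trees (Theorem~\ref{thm:tree-factor}), which is how the paper covers every connected $F$.

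\textbf{Your ``main obstacle'' is not an obstacle.} The next start is always the earliest-time unrevealed vertex, so each query is a function of the times and the previous answers. Hence:
\begin{itemize}
\item for fixed times, $\sigma$ is in bijection with the answer sequence, and the chain rule applies to that sequence;
\item $A_v\ge 1$ always, since $\sigma(v)$ itself is available;
\item for fixed $\sigma$, Jensen is applied over the times, conditioned on $\tau_Z$ and on $v$ not being forced. This is valid because the argmin of i.i.d.\ uniforms is independent of their minimum, and the other cycles' times are independent of those of $Z$.
\end{itemize}

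\textbf{Step 3 needs fixing.} Your display implicitly uses $\log\bigl(k+1+d_v(1-t)^k\bigr)\le\log\bigl((d_v+k+1)(1-t)^k\bigr)$, which is false. With $a=k+1$ (or $a=k-1$), the exact value is
\[
\int_0^1 k(1-t)^{k-1}\log\bigl(a+d_v(1-t)^k\bigr)\,dt=\int_0^1\log(a+d_v u)\,du=\log\frac{d_v+a}{e}+\frac{a}{d_v}\log\Bigl(1+\frac{d_v}{a}\Bigr).
\]
The last term has order $\log d_v/d_v$, so your hoped-for $C=k+1$ with no $\varepsilon$-loss does not follow. For $k=2$, disjoint copies of $K_{d,d}$ show that $\varepsilon=0$ is impossible for any fixed $C$. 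For the same reason, $(d!)^{1/d}\le (d+C)/e$ fails for large $d$; use Proposition~\ref{prop:factorial-linear} with its factor $1+\varepsilon$.

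The remedy is routine. If some $d_v=0$ there is nothing to prove. Otherwise:
\begin{itemize}
\item for $d_v\ge D_0(k,\varepsilon)$, the extra term is at most $\frac{\log(1+\varepsilon)}{k-1}$, so after multiplying by $1-\frac1k$ and summing it costs at most $(1+\varepsilon)^{n/k}$;
\item for $1\le d_v<D_0$, use $\frac{a}{d_v}\log(1+\frac{d_v}{a})\le 1$ and take $C\ge e(D_0+a)$, so that $\log\frac{d_v+a}{d_v+C}\le -1$.
\end{itemize}
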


The bound in \Cref{thm:main-factors} is asymptotically sharp. The extremal construction is given by disjoint unions of cliques satisfying suitable divisibility conditions; see \Cref{sec:lowerbounds}. 

\begin{remark}
    Unfortunately, the inequality \eqref{eq:main-equation} fails for some non-Hamiltonian graphs; highly unbalanced bipartite graphs provide examples. Nevertheless, in Section~\ref{sec:F-factors}, we establish a nontrivial upper bound on $N_{\mathrm{factor}}(F; G)$ in terms of the degree sequence of the host graph for every connected graph $F$. We refer the reader to Theorem~\ref{thm:general-factors}.
\end{remark} 

Applying the AM-GM inequality to \Cref{thm:main-factors}, we also obtain an asymptotically sharp Kruskal--Katona-type result for $F$-factors whenever $F$ is Hamiltonian. To state this result, let $N_F(n,m)$ denote the maximum number of $F$-factors in an $n$-vertex $m$-edge graph.

\begin{corollary}\label{cor:factor-kruskal-katona}
    For every $\varepsilon > 0$ and a Hamiltonian graph $F$, there exists a constant $C = C(F, \ve) > 0$ such that the following holds. 
    For all positive integers $n$ and $m$ with $n$ divisible by $|V(F)|$ and $m \geq Cn$, we have
    $$
        N_F(n, m) \leq \left[(1 + \ve) \left( \frac{|V(F)|}{|\mathrm{Aut}(F)|} \right)^{\frac{1}{|V(F)| - 1}} \frac{2m}{e n} \right]^{\left(1 - \frac{1}{|V(F)|}\right) n}.
    $$
    Moreover, this bound is asymptotically sharp for infinitely many pairs $(n,m)$.
\end{corollary}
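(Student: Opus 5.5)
The upper bound follows from \Cref{thm:main-factors} by the AM--GM inequality. Write $k \defeq |V(F)|$ and let $G$ be an $n$-vertex $m$-edge graph. Given $\ve>0$, choose $\ve'>0$ small, let $C_0 = C(F,\ve')$ be the constant from \Cref{thm:main-factors}, and apply that theorem to $G$. AM--GM on the degree product gives
$$
\prod_{v\in V(G)} \frac{d_G(v)+C_0}{e} \le \left(\frac{1}{n}\sum_{v}\frac{d_G(v)+C_0}{e}\right)^{n} = \left(\frac{2m/n + C_0}{e}\right)^{n}.
$$
If $m \ge Cn$ with $C$ large compared with $C_0/\ve'$, then $2m/n + C_0 \le (1+\ve')\,2m/n$. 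Substituting into \eqref{eq:main-equation} yields
$$
N_{\mathrm{factor}}(F;G) \le \left((1+\ve')\frac{k}{|\Aut(F)|}\right)^{n/k}\left((1+\ve')\frac{2m}{en}\right)^{(1-1/k)n}.
$$
Since $n/k = \frac{1}{k-1}\cdot(1-1/k)n$, the first factor can be written as $\bigl[(1+\ve')^{1/(k-1)}(k/|\Aut(F)|)^{1/(k-1)}\bigr]^{(1-1/k)n}$. Combining the $(1+\ve')$ factors into a single $(1+\ve)$, which is possible once $\ve'$ is small enough in terms of $\ve$ and $k$, gives the stated bound. Taking the maximum over $G$ finishes the upper bound.

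For sharpness, take $G$ to be a disjoint union of $t$ cliques $K_s$, with $s$ divisible by $k$ and $n = ts$. Then $m = t\binom{s}{2}$, so $2m/n = s-1$. Each $K_s$ contains exactly $s!/\bigl((s/k)!\,|\Aut(F)|^{s/k}\bigr)$ distinct $F$-factors, because copies of $F$ on a fixed $k$-set correspond to $k!/|\Aut(F)|$ labelings. Hence
$$
N_{\mathrm{factor}}(F;G) = \left(\frac{s!}{(s/k)!\,|\Aut(F)|^{s/k}}\right)^{t}.
$$
By Stirling's formula, $s!/(s/k)! = (s/e)^{s}(ek/s)^{s/k}e^{o(s)}$, which equals $\bigl[(k^{1/(k-1)}\,s/e)^{1-1/k}\bigr]^{s}e^{o(s)}$. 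Dividing by $|\Aut(F)|^{s/k}$ and raising to the power $t$ gives
$$
\left[(1-o(1))\left(\frac{k}{|\Aut(F)|}\right)^{1/(k-1)}\frac{s}{e}\right]^{(1-1/k)n}.
$$
Since $s = (1+o(1))\,2m/n$, this matches the upper bound up to a factor $(1\pm\ve)$ inside the bracket once $s$ is large. Letting $s\to\infty$ along multiples of $k$, with $t$ arbitrary, produces infinitely many pairs $(n,m)$ for which the bound is asymptotically sharp, and in these pairs $m/n = (s-1)/2 \ge C$.

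The only delicate point is the Stirling bookkeeping: one has to check that the $e^{o(s)}$ error and the shift from $s-1$ to $s$ become a $(1+o(1))$ factor inside the bracket, and not an error in the exponent. This works because the bracket is raised to the power $(1-1/k)n$ and the per-clique error is $e^{o(s)} = (1+o(1))^{s}$. The upper-bound direction is routine given \Cref{thm:main-factors}.
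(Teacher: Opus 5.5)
Your proposal is correct and follows the paper's route: the upper bound is AM--GM applied to the degree product in Theorem~\ref{thm:main-factors}, and sharpness comes from disjoint unions of cliques whose order is a multiple of $|V(F)|$, exactly as in Proposition~\ref{prop:lower-disj-cliques}. The only difference is cosmetic: you apply Stirling's formula directly, where the paper uses its consequence, Proposition~\ref{prop:factorial-linear}.
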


The lower bound of Corollary~\ref{cor:factor-kruskal-katona} is attained by disjoint unions of cliques, as described in \Cref{sec:lowerbounds}.

We also prove an analogue of Corollary~\ref{cor:factor-kruskal-katona} for graphs $F$ that contain two vertex-disjoint cycles of equal length whose union spans $V(F)$. This follows from Theorem~\ref{thm:main-factors} and a multigraph analogue of the Kahn--Lov\'{a}sz theorem, established in Section~\ref{sec:multigraph}.

\begin{theorem}\label{thm:two-cycles}
    Let $F$ be a connected graph that contains two vertex-disjoint cycles of equal length whose union spans $V(F)$. Then for every $\ve > 0$, there exists a constant $C = C(F, \ve) > 0$ such that the following holds. 
    For all positive integers $n$ and $m$ with $n$ divisible by $|V(F)|$ and $m \geq Cn$, we have
    $$
        N_F(n, m) \leq \left[(1 + \ve) \left( \frac{|V(F)|}{|\mathrm{Aut}(F)|} \right)^{\frac{1}{|V(F)| - 1}} \frac{2m}{e n} \right]^{\left(1 - \frac{1}{|V(F)|}\right) n}.
    $$
    Moreover, this bound is asymptotically sharp for infinitely many pairs $(n,m)$.
\end{theorem}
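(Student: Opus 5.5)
Let $F$ have $k = 2\ell$ vertices, and fix two vertex-disjoint cycles $C^1, C^2$ of length $\ell$ whose union spans $V(F)$. The plan is to compare $F$-factors with $H$-factors for an auxiliary structure and count these with a multigraph Kahn--Lov\'asz bound.

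\textbf{Encoding step.} Each $F$-factor of $G$, restricted to the chosen cycles, gives a $C_\ell$-factor of $G$. Since $C_\ell$ is Hamiltonian, \Cref{thm:main-factors} bounds the number of $C_\ell$-factors by
\[
\Bigl((1+\ve)\tfrac{\ell}{2\ell}\Bigr)^{n/\ell}\Bigl(\prod_v \tfrac{d_G(v)+C}{e}\Bigr)^{1-1/\ell}.
\]
Given a $C_\ell$-factor $\mathcal{Q}$, an $F$-factor is obtained by pairing up the cycles of $\mathcal{Q}$ and then choosing how the two cycles are glued so that the remaining edges of $F$ are present. Form the auxiliary multigraph $M_{\mathcal{Q}}$ on vertex set $\mathcal{Q}$ of size $n/\ell$. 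It has one edge between $Q,Q'$ for each embedding of $F$ on $V(Q)\cup V(Q')$ that uses $Q,Q'$ as $C^1,C^2$. The number of $F$-factors, weighted by $|\Aut(F)|/(\text{number of cycle-pair choices})$ to correct for overcounting, is then at most the sum over $\mathcal{Q}$ of the number of perfect matchings in $M_{\mathcal{Q}}$.

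\textbf{Multigraph step.} Apply the multigraph Kahn--Lov\'asz theorem from Section~\ref{sec:multigraph}. I expect it to have the form $\prod_x (d_M(x)!)^{1/(2d_M(x))}$ with multiplicities counted, or an asymptotic variant of this. It bounds the number of perfect matchings of $M_{\mathcal{Q}}$ by roughly $\prod_{Q}(d_M(Q)/e)^{1/2}$. Fix a connecting edge $uv$ of $F$ between $C^1$ and $C^2$, which exists because $F$ is connected. Each edge of $M_{\mathcal{Q}}$ at $Q$ is determined, up to $O_F(1)$ choices, by an edge of $G$ from $V(Q)$ to some other cycle. Hence $d_M(Q)\le O_F(1)\sum_{x\in V(Q)} d_G(x)$. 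A sharper count is needed for sharpness: $d_M(Q)$ is at most about (number of gluings)$\cdot \sum_{x\in Q} d_G(x)/\ell$. Multiplying, I would use AM--GM over the whole graph rather than vertex by vertex. Fix $|E(G)|=m$ and write $d=2m/n$ for the average degree. Then the $C_\ell$-factor count is $\lesssim (1/2)^{n/\ell}(d/e)^{(1-1/\ell)n}$, and the matching count is $\lesssim (c_F d/e)^{n/(2\ell)}$. Their product has exponent $(1-1/\ell+1/(2\ell))n=(1-1/k)n$, which matches the target.

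\textbf{Main obstacle: the constants.} The constants must combine to exactly $(k/|\Aut(F)|)^{n/k}$ up to $(1+\ve)^n$, and the AM--GM over $\mathcal{Q}$ must be carried out uniformly in $\mathcal{Q}$. This requires a careful count of $d_M(Q)$. For a fixed $Q$ and a target cycle $Q'$, the number of valid gluings is at most $2\ell$ rotations/reflections times the indicator that the required cross edges are present. Summing over $Q'$, this is at most $2\ell\cdot e_G(Q,\text{rest})/|E_F(C^1,C^2)|\cdot(\cdots)$. The symmetry correction between $\Aut(F)$ and the automorphisms preserving $\{C^1,C^2\}$ must then be tracked. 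I would try to avoid proving that this is tight. Instead I would choose the cycle pair and use the inequality $\prod_Q \sum_{x\in Q} d_G(x)\le (\ell d)^{n/\ell}$ together with the crude bound, losing only when $F$ has few cross edges. If the constant falls short, I would refine the estimate by counting cross edges: the factor $d^{|E_F(C^1,C^2)|-1}$ is absorbed because $d\cdot(d/n)^{j-1}$ is small unless $G$ is dense, and in that case $m\ge Cn$ lets the error be absorbed into $(1+\ve)$.

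\textbf{Sharpness.} This follows from the disjoint-union-of-cliques construction of \Cref{sec:lowerbounds}, exactly as in \Cref{cor:factor-kruskal-katona}.
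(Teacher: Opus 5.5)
\textbf{There is a genuine gap.} Your architecture is the same as the paper's: bound the $C_\ell$-factors via \Cref{thm:main-factors}/\Cref{cor:factor-kruskal-katona}, then count pairings of the cycles with the multigraph Kahn--Lov\'asz bound. But the whole content of the theorem is the sharp constant, and you explicitly leave it open. Neither the $O_F(1)$ in $d_M(Q)\le O_F(1)\sum_{x\in V(Q)}d_G(x)$ nor the $c_F$ in your matching count is ever determined. So what you actually establish is $N_F(n,m)\le c^{\,n}\bigl[\cdots\bigr]^{(1-1/|V(F)|)n}$ for an unspecified $c=c(F)$, which is not the statement.

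The remedies you sketch do not close this:
\begin{itemize}
\item The heuristic that gluings using $j\ge 2$ cross edges are rare because $d\,(d/n)^{j-1}$ is small fails on the extremal graphs themselves. In disjoint copies of $K_{c|V(F)|}$ we have $d/n\to 0$, yet every cross edge between two cycles in a common clique is present. The ``sharper count'' you propose is also $\ell$ times too large in a clique.
\item Your overcounting weight $|\Aut(F)|/(\text{number of cycle-pair choices})$ is wrong. If the edges of $M_{\calQ}$ are embeddings, each copy of $F$ is counted exactly $|\Aut(F)|$ times in total over all $\calQ$. Hence $\sum_{\calQ}N_{\mathrm{factor}}(K_2;M_{\calQ})=|\Aut(F)|^{n/(2\ell)}N_{\mathrm{factor}}(F;G)$ holds exactly, and any smaller weight loses an exponential factor.
\end{itemize}

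\textbf{The missing idea.} No refinement is needed: use a single cross edge and compute its constant exactly. Fix $uv\in E(F)$ with $u\in V(C^1)$ and $v\in V(C^2)$. An embedding $\phi$ with $\{\phi(C^1),\phi(C^2)\}=\{Q,Q'\}$ is determined by three things:
\begin{itemize}
\item the edge $\phi(u)\phi(v)$ of $G$ between $V(Q)$ and $V(Q')$;
\item which endpoint of that edge is $\phi(u)$;
\item the reflections of $C^1$ and $C^2$ fixing $u$ and $v$.
\end{itemize}
So each cross edge of $G$ produces at most $8=|\Aut(W_\ell)|$ edges of $M_{\calQ}$ for $\ell\ge 3$, where $W_\ell$ is $2\cdot C_\ell$ plus one joining edge. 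Hence $|E(M_{\calQ})|\le 8m$ and all multiplicities are at most $8\ell^2$. The argument of \Cref{cor:multigraph-KK} (via \Cref{thm:multigraph-KL} with suitable $d_v$) then gives, uniformly in $\calQ$,
$N_{\mathrm{factor}}(K_2;M_{\calQ})\le\bigl((1+\ve)\,8\ell\cdot\tfrac{2m}{en}\bigr)^{n/(2\ell)}$.
Combine this with $|\calC|\le 2^{-n/\ell}\bigl((1+\ve)\tfrac{2m}{en}\bigr)^{(1-1/\ell)n}$ and the exact weight $|\Aut(F)|^{n/(2\ell)}$. The constants collapse to exactly $\bigl(2\ell/|\Aut(F)|\bigr)^{n/(2\ell)}$, with exponent $(1-\tfrac{1}{2\ell})n$ on $(1+\ve)\tfrac{2m}{en}$, which is the claimed bound.

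The paper packages the same computation in two steps. It first counts $W_\ell$-factors, where each cross edge of $G$ is exactly one edge of $M_H$, so $|E(M_H)|\le m$ and $\mu\le\ell^2$. It then passes to $F$ by \Cref{lem:aut-double-counting}, which contributes $(8/|\Aut(F)|)^{n/(2\ell)}$. The point you were missing is that the ``crude'' one-edge bound is already sharp once its constant is pinned down as $|\Aut(W_\ell)|$.
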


We note that there exist many interesting non-Hamiltonian graph but contain a spanning disjoint union of two cycles of equal length, for instance, the Petersen graph. In other words, Theorem~\ref{thm:main-factors} and Corollary~\ref{cor:factor-kruskal-katona} do not apply to the Petersen graph, but Theorem~\ref{thm:two-cycles} works for the Petersen graph.


\subsection{High-level overview}\label{subsec:overview}
We now give a high-level overview of the main ideas behind our proofs.

We first discuss Theorem~\ref{thm:main-factors}. Let $F$ be a Hamiltonian graph on $\ell$ vertices. Since $F$ contains a spanning cycle $C_\ell$, a double-counting argument reduces the problem to bounding the number of embeddings of $C_\ell$-factors into a given host graph. To prove it, we partition $V(G)$ into $\ell$ equally sized classes $V_0,\dots,V_{\ell-1}$ and count only those cycle factors whose vertices appear successively in these classes. If we ignore the cycle edges between one fixed pair of consecutive classes, the remaining edges form perfect matchings between the other consecutive pairs. Thus, the Br\'{e}gman--Minc inequality (Theorem~\ref{thm:bregman-minc}), gives an upper bound in terms of the numbers of neighbors that each vertex has in the next class.

We apply this estimate for each of the $\ell$ possible pairs of consecutive classes and take the geometric mean. Since every vertex contributes to exactly $\ell-1$ of the resulting estimates, this produces the exponent $1-1/\ell$ in Theorem~\ref{thm:cycle-factor}, which corresponds to the exponent $1 - 1/|V(F)|$ in Theorem~\ref{thm:main-factors}. We then sum over all equal partitions and apply H\"{o}lder's inequality. The remaining task is to control a weighted sum over partitions, which is precisely the content of Lemma~\ref{lem:partition-cycle-sum}.

The main idea behind Lemma~\ref{lem:partition-cycle-sum} is to reverse the order of counting. Recall that the Br\'egman--Minc inequality naturally gives a bound involving factorials of the relevant degrees. 
Using Stirling's approximation, in particular Proposition~\ref{prop:factorial-linear}, we replace these factorial terms, up to an arbitrarily small multiplicative error, by linear functions of the degrees. 
This linearization is crucial, as it allows us to interpret the resulting product in purely graph-theoretic terms. More precisely, replace every edge of $G$ by the two possible orientations and add a bounded number of directed loops at each vertex. 
For a fixed partition, the resulting product counts the number of ways to choose exactly one outgoing edge from every vertex, subject to the condition that each chosen non-loop edge goes from one class to the next. We instead first fix the resulting directed graph and count the number of partitions compatible with it.

Observe that each connected component of such a directed graph contains a unique directed cycle, and once the class of one vertex in a component is fixed, the classes of all other vertices are determined. 
Hence, if the directed graph has $k$ connected components, it is compatible with at most $\ell^k$ partitions. 
Lemma~\ref{lem:exponential-count} shows that this additional weight can be absorbed into an arbitrarily small exponential loss: graphs with few components have small weight, while graphs with many components necessarily contain many small components and are correspondingly sparse. 
This proves Lemma~\ref{lem:partition-cycle-sum}, and hence Theorem~\ref{thm:cycle-factor} and Theorem~\ref{thm:main-factors}.

The proof of Theorem~\ref{thm:general-factors}, the general connected graph case, follows the same strategy. We choose a spanning tree $T$ of $F$ and reduce the problem to Theorem~\ref{thm:tree-factor}. After fixing an equal partition of $V(G)$, we root $T$ at one of its vertices and orient every edge towards the root. The resulting structure can again be counted using perfect matchings between appropriate pairs of classes. The Br\'egman--Minc inequality, geometric averaging over the choices of the root, and H\"older's inequality then give the desired bound. The additional factor involving $a^a b^b$ comes from the two color classes of $T$, of sizes $a$ and $b$.

Finally, to prove Theorem~\ref{thm:two-cycles}, we first count factors consisting of cycles using Corollary~\ref{cor:factor-kruskal-katona}. For a fixed cycle factor $H$, we construct an auxiliary loopless multigraph whose vertices correspond to the cycles of $H$, with multiplicities given by the numbers of edges of $G$ joining pairs of cycles. Perfect matchings in this multigraph correspond to ways to pair the cycles and add one joining edge to each pair. We therefore prove the multigraph Kahn--Lov\'asz theorem, Theorem~\ref{thm:multigraph-KL}, using the entropy method. When the multiplicities are bounded, and the average degree is sufficiently large, the resulting error term is negligible, giving Corollary~\ref{cor:multigraph-KK}. Applying this to the auxiliary multigraph and then using Lemma~\ref{lem:aut-double-counting} yields Theorem~\ref{thm:two-cycles}.


\paragraph{Organization.}
The rest of the paper is organized as follows. In Section~\ref{sec:prelim}, we introduce the notation used throughout the paper and briefly review some basic properties of information entropy. In Section~\ref{sec:lowerbounds}, we establish the lower bounds for our main results. Section~\ref{sec:F-factors} contains the proof of Theorem~\ref{thm:main-factors}, together with its extension to more general cases. As mentioned above, Theorem~\ref{thm:two-cycles} follows from Theorem~\ref{thm:main-factors} and a multigraph analogue of the Kahn--Lov\'{a}sz theorem. In Section~\ref{sec:multigraph}, we establish this multigraph extension and then prove Theorem~\ref{thm:two-cycles}. We conclude the paper with some remarks and open problems.


\phantomsection
\addcontentsline{toc}{subsection}{Statement of AI use}
\paragraph{Statement of AI use.}
We used ChatGPT 5.6 Pro/Sol to improve the exposition and proofread the manuscript. Apart from this, no AI tools were used. All mathematical ideas were developed independently by the author, who takes full responsibility for this article.


\section{Preliminaries}\label{sec:prelim}

\subsection{Notation}\label{subsec:notation}
We write $[n]=\{1,\dots,n\}$ for each positive integer $n$. We use the notation
$\beta \ll \alpha_1,\dots,\alpha_t$ to mean that there exists a function $f$ such that $\beta \leq f(\alpha_1,\dots,\alpha_t)$. We do not attempt to specify the function $f$ explicitly.

Throughout the paper, we use standard notation from graph theory.  All graphs and digraphs are considered simple unless otherwise stated. 
For a graph $G$, we denote its vertex set and edge set by $V(G)$ and $E(G)$, respectively, and write $v(G) \defeq |V(G)|$ and $e(G) \defeq |E(G)|$. For a vertex $v \in V(G)$, we denote by $N_G(v)$ and $d_G(v)$ the set of neighbors and degree of $v$ in $G$, respectively. We often omit the subscript when the underlying graph is clear from the context.
For a digraph $D$ and a vertex $v \in V(D)$, we use analogous notation. In particular, we denote by $N^+_D(v)$ and $N^-_D(v)$ be out- and in-neighbors of $v$ in $D$, and refer their sizes as out- and in-degree of $v$, namely $d^+_D(v)$ and $d^-_D(v)$, respectively. Again, we omit the subscript when the underlying digraph is clear from the context. For a graph $G$ and two disjoint vertex subsets $X, Y\subseteq V(G)$, we write $G[X]$ the induced subgraph of $G$ induced by $X$, and $G[X, Y]$ the induced bipartite subgraph of $G$, where the edges of $G[X, Y]$ the edges whose endpoints are lies in both $X$ and $Y$.

In this paper, we also consider a loopless multigraph and a digraph with multi-loops in the proof of our theorems. We also consider loopless multigraphs and digraphs with multiple loops. For a loopless multigraph $M$ and a pair $e\in\binom{V(M)}{2}$, let $\mu(e)$ denote the multiplicity of $e$, and let $\mu(M)$ denote the maximum edge multiplicity of $M$. For $v\in V(M)$, we define $d_M(v)\defeq\sum_{u\in V(M)}\mu(uv)$, that is, the number of edges incident with $v$, counted with multiplicity. 
We say a digraph $D$ has multi-loops if some vertices of $D$ have multiple loops directed out and also into themselves. 
We write $\mathrm{Comp}(G)$ and $\mathrm{Comp}(D)$ for the set of connected components of $G$ and the set of connected components of the underlying graph of $D$, respectively.
Note that all simple graphs and digraphs are also loopless multigraphs and digraphs with multi-loops.

For graphs $F$ and $G$, we denote by $N(F; G)$ the number of $F$ copies in $G$. We write $m \cdot F$ for the disjoint union of $m$ copies of $F$ for a positive integer $m$. With these notation, $N_{\mathrm{factor}}(F; G)$ is the same with $N\left(\frac{|V(G)|}{|V(F)|} \cdot F; G\right)$ whenever $|V(F)|$ divides $|V(G)|$. We say a function $\phi: V(F) \to V(G)$ is an \emph{embedding} if $\phi$ is an injective homomorphism from $F$ to $G$. We write $\mathrm{Emb}(F; G)$ for the set of embeddings from $F$ to $G$. We note that 
\begin{equation}\label{eq:embedding-automorphism}
    N(F; G) = \frac{|\mathrm{Emb}(F; G)|}{|\Aut(F)|}
\end{equation}
holds, where $\Aut(F)$ is the automorphism group of $F$.

We now introduce a new notation regarding the degree of a vertex in a given graph with respect to a certain vertex partition. Let $G$ be a graph and $v$ be a vertex of $G$. 
We define an \emph{$r$-partition} $\calP$ of $V(G)$ as an $r$-tuple $(V_0, V_1, \dots, V_{r-1})$, where the indices are members of the cyclic group $\mathbb{Z}_{r}$ such that $V_i \cap V_j = \emptyset$ whenever $i\neq j$ and $V(G) = \cup_{i\in \mathbb{Z}_r} V_i$. We say this partition is \emph{$r$-equipartition} if $|V_i| = |V_j|$ for all $i, j \in \mathbb{Z}_r$. Consider a labelled digraph $D$ on the vertex set $\mathbb{Z}_r$. Without loss of generality, assume $v\in V_0$. Then we define the out-degree of $v$ with respect to the pair $(\calP, D)$, denoted by $d^+_G(v; (\calP, D))$, as
$$
    d^+_G(v; (\calP, D)) \defeq \sum_{i \in N^+_D(0)} |N_G(v) \cap V_i|.
$$
The notion of $d^+_G(v; (\calP, D))$ plays a crucial role in the proof of Theorem~\ref{thm:main-factors}.

Lastly, throughout the paper, all the logarithms are taken base $e$ unless we indicate the base, and we consider $\log 0 = 0$.


\subsection{Entropy basics}\label{subsec:entropy-basic}

We use an entropy method inspired by Radhakrishnan's elegant proof~\cite{Radhakrishnan} of the Br\'{e}gman--Minc inequality to prove Theorem~\ref{thm:multigraph-KL} in Section~\ref{sec:multigraph}.
Information entropy (Shannon's entropy) is a very useful quantity to capture the randomness of a given discrete random variable that was introduced by Shannon in 1948. 
For a random variable $Z$ on a probability space $\calZ$ and $z \in \calZ$, we denote by $P_Z(z)$ the value $\Pr[Z = z]$.
The following is the definition of information entropy. 

\begin{definition}\label{def:entropy}
    Let $X$ be a discrete random variable on a finite set $\calX$. Then the entropy of $X$, denoted as $H(X)$ is defined by
    $$
        H(X) \defeq - \sum_{x\in \calX} P_X(x) \log P_X(x).
    $$
\end{definition}

We also need a conditional entropy defined as follows.

\begin{definition}\label{def:conditional-entropy}
    Let $X$ and $Y$ be discrete random variables on a finite set $\calX$ and $\calY$, respectively. Then the conditional entropy of $X$ and $Y$, denoted as $H(X | Y)$ is defined by
    $$
         H(X | Y) \defeq - \sum_{(x, y)\in \calX \times \calY} P_{(X, Y)}(x, y) \log \frac{P_{(X, Y)}(x, y)}{P_Y(y)}.
    $$
\end{definition}
Equivalently, $H(X | Y) = \mathbb{E}_Y[H(X|Y = y)]$.

Rather than its definition itself, entropy has been used in many combinatorial problems because of its highly useful properties. We summarize basic properties of entropy that we need below.

\begin{proposition}\label{prop:entropy}
    Let $X, Y, X_1, \dots, X_n$ be discrete random variables. Then the following properties hold.
    \begin{enumerate}
        \item[$\bullet$] \textbf{[Maximality of the uniform]} We have $0 \leq H(X) \leq \log |\calX|$, and the equality holds if and only if $X$ is a uniform random variable on $\calX$.
        \item[$\bullet$] \textbf{[Monotonicity]}  If $Y = f(X)$ for some function $f$, then $H(Y) \leq H(X)$.
        \item[$\bullet$] \textbf{[Chain rule]} $H[(X_1, \dots, X_n)] = \sum_{i\in [n]} H(X_i | X_1, \dots, X_{i-1})$. 
    \end{enumerate}
\end{proposition}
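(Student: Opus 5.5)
The statement just above is Proposition~\ref{prop:entropy}: maximality of the uniform distribution, monotonicity under functions, and the chain rule. All three are standard facts. The plan is to derive each directly from Definitions~\ref{def:entropy} and~\ref{def:conditional-entropy}, using only concavity of $\log$ and the factorization of joint probabilities.

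\emph{Maximality of the uniform.} Nonnegativity is immediate: each $P_X(x)\in[0,1]$, so each term $-P_X(x)\log P_X(x)$ is nonnegative. For the upper bound, rewrite
$$
H(X)=\sum_{x:\,P_X(x)>0} P_X(x)\log\frac{1}{P_X(x)} .
$$
Apply Jensen's inequality to the concave function $\log$ with weights $P_X(x)$. This gives
$$
H(X)\le \log\sum_{x:\,P_X(x)>0} 1\le \log|\calX| .
$$
Strict concavity handles the equality case. Equality in Jensen forces $1/P_X(x)$ to be constant on the support. Equality in the second step forces the support to be all of $\calX$. Together these mean $X$ is uniform, and conversely a uniform $X$ clearly attains $\log|\calX|$.

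\emph{Monotonicity.} Let $Y=f(X)$. For each $y$, we have $P_Y(y)=\sum_{x\in f^{-1}(y)}P_X(x)$, so $P_X(x)\le P_Y(f(x))$ for every $x$. Hence $-\log P_X(x)\ge -\log P_Y(f(x))$ whenever $P_X(x)>0$. Multiply by $P_X(x)$ and sum over $x$, grouping the terms by their value $y=f(x)$. The left side becomes $H(X)$ and the right side becomes $H(Y)$, so $H(Y)\le H(X)$.

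\emph{Chain rule.} First prove the two-variable case $H(X,Y)=H(Y)+H(X\mid Y)$. By Definition~\ref{def:conditional-entropy},
$$
H(X\mid Y)=-\sum_{x,y}P_{(X,Y)}(x,y)\log P_{(X,Y)}(x,y)+\sum_{x,y}P_{(X,Y)}(x,y)\log P_Y(y).
$$
The first sum is $H(X,Y)$. Marginalizing over $x$ turns the second sum into $\sum_y P_Y(y)\log P_Y(y)=-H(Y)$. Rearranging gives the two-variable identity.

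The general statement then follows by induction on $n$. Apply the two-variable case with $Y=(X_1,\dots,X_{n-1})$ and $X=X_n$, then use the induction hypothesis for $H(X_1,\dots,X_{n-1})$.

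None of these steps is a real obstacle. The only point needing care is the convention $\log 0=0$ (equivalently $0\log 0=0$): every sum should be restricted to the support, so that the Jensen step and the marginalization step are valid.
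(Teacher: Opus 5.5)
Your proof is correct. The paper gives no proof of Proposition~\ref{prop:entropy} at all: it quotes the three properties as standard background and refers to Galvin's survey~\cite{Galvin}, so there is no competing argument to compare yours against. Citing keeps the preliminaries short, while your version makes the section self-contained, and your arguments are the standard ones:
\begin{itemize}
\item For maximality, you apply Jensen's inequality for the concave $\log$ with weights $P_X(x)$ on the support. This gives $H(X)\le\log|\mathrm{supp}(X)|\le\log|\calX|$, and strict concavity plus full support handles the equality case.
\item For monotonicity, you use the pointwise bound $P_X(x)\le P_Y(f(x))$, multiply by $P_X(x)$, and group the terms by the fibres of $f$.
\item For the chain rule, you prove the two-variable identity $H(X,Y)=H(Y)+H(X\mid Y)$ and then induct.
\end{itemize}
Two minor points are worth stating explicitly. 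First, the ``if and only if'' clause refers only to the upper bound, which is how you read it; the lower bound $H(X)=0$ is attained exactly when $X$ is almost surely constant. Second, restricting every sum to the relevant support, as you propose, is also what makes the quotient $P_{(X,Y)}(x,y)/P_Y(y)$ in Definition~\ref{def:conditional-entropy} well defined. The paper's convention $\log 0=0$ by itself does not resolve the case $0/0$.
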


For more properties on information entropy and its applications in combinatorics, we refer the readers to a survey of Galvin~\cite{Galvin}.


\subsection{Factorial estimates}\label{subsec:numeric}

As we discussed in Section~\ref{subsec:overview}, our proof uses the Br\'{e}gman--Minc inequality and the Kahn--Lov\'{a}sz theorem as black-box theorems. Both of them contain many factorials in the statements, and we approximate them with a polynomial to connect those quantities to the number of certain digraphs. To achieve this, we collect upper and lower bounds of factorials via Stirling's approximation. Below is Stirling's approximation.

\begin{proposition}[Stirling's approximation]
    For a positive integer $n$, we have
    \begin{equation}\label{eq:stirling}
        \sqrt{2\pi n} \left( \frac{n}{e} \right)^n \leq n! \leq  \sqrt{2\pi n} \left( \frac{n}{e} \right)^n e^{\frac{1}{12 n}}.
    \end{equation}
\end{proposition}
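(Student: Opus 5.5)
The plan is to prove the two inequalities in \eqref{eq:stirling} separately, using the standard Euler--Maclaurin/telescoping approach. Set
$$
    a_n \defeq \log n! - \left(n + \tfrac12\right)\log n + n .
$$
The whole argument is to show that $a_n$ is decreasing, converges to $\tfrac12\log(2\pi)$, and satisfies $a_n - a_{n+1} \le \frac{1}{12n} - \frac{1}{12(n+1)}$.

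\textbf{Computing the increments.} I first compute
$$
    a_n - a_{n+1} = \left(n+\tfrac12\right)\log\frac{n+1}{n} - 1 .
$$
With $x = 1/(2n+1)$ one has $\frac{n+1}{n} = \frac{1+x}{1-x}$, so the series
$$
    \tfrac12\log\frac{1+x}{1-x} = x + \frac{x^3}{3} + \frac{x^5}{5} + \cdots
$$
gives
$$
    a_n - a_{n+1} = \sum_{k\ge1}\frac{x^{2k}}{2k+1}.
$$

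\textbf{Bounding the increments.} Every term of this series is nonnegative, so $a_n$ is decreasing. For the upper bound, I replace each $1/(2k+1)$ by $1/3$ and sum the geometric series:
$$
    a_n - a_{n+1} \le \frac{x^2}{3(1-x^2)} = \frac{1}{12n(n+1)} = \frac{1}{12n} - \frac{1}{12(n+1)} .
$$
This says that $a_n - \frac{1}{12n}$ is increasing.

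\textbf{Identifying the limit.} Since $a_n$ decreases and $a_n - \frac{1}{12n}$ increases, both sequences converge to a common limit $L$. This yields the sandwich
$$
    L \le a_n \le L + \frac{1}{12n}.
$$
Exponentiating gives $e^{L} n^{n+1/2}e^{-n} \le n! \le e^{L} n^{n+1/2}e^{-n} e^{1/(12n)}$. This is exactly \eqref{eq:stirling}, provided $e^{L} = \sqrt{2\pi}$.

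\textbf{The main obstacle.} The hardest step is identifying $L = \tfrac12\log 2\pi$. I would do this through Wallis' product. The integrals $I_k = \int_0^{\pi/2}\sin^k t\,dt$ are monotone in $k$ and satisfy $I_{2n}/I_{2n+1}\to 1$. This gives
$$
    \frac{(2^n n!)^4}{((2n)!)^2(2n+1)} \to \frac{\pi}{2}.
$$
Substituting $n! \sim e^{L} n^{n+1/2}e^{-n}$ and $(2n)! \sim e^{L}(2n)^{2n+1/2}e^{-2n}$ makes the left side tend to $e^{2L}/4$. Therefore $e^{2L} = 2\pi$, which completes the proof.
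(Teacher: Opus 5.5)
The paper gives no proof of this statement: it quotes Stirling's formula (in Robbins' sharpened form) as a classical fact and uses it only to derive Proposition~\ref{prop:factorial-linear}. Your argument is the standard Robbins proof, and it is correct.
\begin{itemize}
\item With $x = 1/(2n+1)$ one has $n+\tfrac12 = \frac{1}{2x}$ and $1 - x^2 = \frac{4n(n+1)}{(2n+1)^2}$. So the increment series and the geometric bound $\frac{1}{12n(n+1)}$ come out exactly as you say.
\item The two monotone sequences give $L \le a_n \le L + \frac{1}{12n}$.
\item In the Wallis step the left-hand side becomes $\frac{e^{2L} n}{2(2n+1)} \to \frac{e^{2L}}{4}$, as claimed.
\end{itemize}
The only omitted details are routine. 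One is the reduction formula $I_k = \frac{k-1}{k} I_{k-2}$, which gives the closed forms of $I_{2n}$ and $I_{2n+1}$. The other is the squeeze $1 \le I_{2n}/I_{2n+1} \le I_{2n-1}/I_{2n+1} = \frac{2n+1}{2n}$.

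Besides making the paper self-contained, your argument shows that its hardest step is unnecessary for the paper's purposes. Since $a_1 = 1$, monotonicity alone gives $\frac{11}{12} = a_1 - \frac{1}{12} \le L \le a_1 = 1$. Hence $e^{11/12}\sqrt{n}\,(n/e)^n \le n! \le e^{1 + \frac{1}{12n}}\sqrt{n}\,(n/e)^n$. The proof of Proposition~\ref{prop:factorial-linear} only uses $n! \ge (n/e)^n$ and the fact that the $n$-th root of the prefactor tends to $1$. So this cruder bound already suffices there, without identifying $e^{L} = \sqrt{2\pi}$ via Wallis.
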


As a consequence of the above inequality, we obtain the following useful inequality.

\begin{proposition}\label{prop:factorial-linear}
    For every real number $\ve > 0$, there exists $C = C(\ve) > 0$ such that the following holds.
    For every positive integer $n$, we have
    $$
        \frac{n}{e} \leq (n!)^{\frac{1}{n}} \leq (1 + \ve)\frac{n + C}{e}.
    $$
\end{proposition}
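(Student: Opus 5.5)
The plan is to derive both inequalities directly from Stirling's approximation \eqref{eq:stirling}. The lower bound is immediate. From the left inequality in \eqref{eq:stirling} we get $n! \geq \sqrt{2\pi n}\,(n/e)^n \geq (n/e)^n$, because $\sqrt{2\pi n} \geq 1$ for every positive integer $n$. Taking $n$-th roots gives $(n!)^{1/n} \geq n/e$.

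For the upper bound, I will take $n$-th roots of the right inequality in \eqref{eq:stirling}:
$$
    (n!)^{\frac{1}{n}} \leq \frac{n}{e}\,(2\pi n)^{\frac{1}{2n}} e^{\frac{1}{12n^2}}.
$$
The correction factor $g(n) \defeq (2\pi n)^{1/(2n)} e^{1/(12 n^2)}$ tends to $1$ as $n \to \infty$. Indeed, $\log g(n) = \frac{\log(2\pi n)}{2n} + \frac{1}{12n^2} \to 0$. Hence there is an $N = N(\ve)$ such that $g(n) \leq 1+\ve$ for all $n \geq N$. For these $n$ we obtain $(n!)^{1/n} \leq (1+\ve)\, n/e \leq (1+\ve)(n+C)/e$ for any $C \geq 0$.

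Small $n$ are handled by choosing $C$ large. For $1 \leq n < N$ there are finitely many values, and $(n!)^{1/n} \leq n \leq N$. I will take $C \defeq eN$. Then $(1+\ve)(n+C)/e \geq C/e = N \geq (n!)^{1/n}$ for all such $n$. Combined with the previous paragraph, this $C = C(\ve)$ works for every positive integer $n$.

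No step here is a real obstacle. The only point needing care is uniformity in $n$, which the split into $n \geq N$ and $n < N$ handles. If an explicit constant is wanted, one can use $\log(2\pi n)/(2n) \leq \ve/2$ once $n \gtrsim \ve^{-1}\log(1/\ve)$, but this is not needed for the statement.
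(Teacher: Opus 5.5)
Your proof is correct and follows the paper's argument essentially verbatim. You use Stirling's bounds for the lower bound and for large $n$, where the correction factor $(2\pi n)^{1/(2n)}e^{1/(12n^2)}\to 1$, and the crude bound $(n!)^{1/n}\le n$ with $C=eN$ for the finitely many small $n$; unlike the paper, you correctly note that convergence of the correction factor to $1$ suffices, so its monotonicity is not needed.
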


\begin{proof}[Proof of Proposition~\ref{prop:factorial-linear}]
    The lower bound directly comes from \eqref{eq:stirling}. Since the function $f(n) = \left(\sqrt{2\pi n} \cdot e^{\frac{1}{12 n}}\right)^{1/n}$ is decreasing and tends to $1$ as $n$ goes to infinity, there exists a natural number $k_{\ve} > 0$ such that for all integers $n > k_{\ve}$ satisfies $f(n) \leq 1 + \ve$. Hence, for this regime, we have the desired inequality by \eqref{eq:stirling}. We now assume $n \leq k_{\ve}$. Then we have $(n!)^{1/n} \leq (n^n)^{1/n} = n \leq k_{\ve}$. Thus, by choosing the constant $C$ as $e \cdot k_{\ve}$, we obtain the desired inequality. This completes the proof.
\end{proof}


\section{Sharpness constructions}\label{sec:lowerbounds}

In this section, we prove the lower bounds of Corollary~\ref{cor:factor-kruskal-katona} and Theorem~\ref{thm:two-cycles}. We also discuss lower bounds for general $F$-factor cases for connected graphs $F$.

We note that for every graph $F$, the number of $F$ copies in $K_{|V(F)|}$ is $\frac{|V(F)|!}{|\Aut(F)|}$. Therefore, for an integer $c \geq 1$, we have
\begin{equation}\label{eq:factors-in-clique}
    N_{\mathrm{factor}}(F; K_{c |V(F)|}) = \frac{(c|V(F)|)!}{c! \cdot (|V(F)|!)^{c}} \cdot \left( \frac{|V(F)|!}{|\Aut(F)|} \right)^c = \frac{(c|V(F)|)!}{c! \cdot |\Aut(F)|^c}
\end{equation}
Let $n$ be a positive integer divisible by $c|V(F)|$ and let $G$ be an $n$-vertex graph which is $\frac{n}{c|V(F)|} \cdot K_{c|V(F)|}$. Then $|E(G)| = (c|V(F)|-1)n/2 =: m$. By \eqref{eq:factors-in-clique}, we have
\begin{equation*}
    N_{\mathrm{factor}}(F; G) = \left( \frac{(c|V(F)|)!}{c! \cdot |\Aut(F)|^c}\right)^{\frac{n}{c|V(F)|}} \geq \left(\frac{1}{|\Aut(F)|} \right)^{\frac{n}{|V(F)|}} \cdot \left( \frac{(c|V(F)|)!}{c!}\right)^{\frac{n}{c|V(F)|}}.
\end{equation*}
Thus, for large enough $c > 0$, Proposition~\ref{prop:factorial-linear} implies that
\begin{equation*}
    N_{\mathrm{factor}}(F; G) \geq \left((1 - \ve)\left(\frac{|V(F)|}{|\Aut(F)|}\right) \right)^{\frac{n}{|V(F)|}} \cdot \left( \frac{c|V(F)|}{e} \right)^{\left(1 - \frac{1}{|V(F)|} \right)n}.
\end{equation*}
Since $2m/n = c|V(F)|-1 \leq c|V(F)|$, we summarize as follows.

\begin{proposition}\label{prop:lower-disj-cliques}
    For a given real number $\ve > 0$, there are infinitely many pairs of $(n, m)$ such that
    \begin{equation}\label{eq:lower-factor}
        N_F(n, m) \geq \left[(1 - \ve) \left( \frac{|V(F)|}{|\mathrm{Aut}(F)|} \right)^{\frac{1}{|V(F)| - 1}} \frac{2m}{e n} \right]^{\left(1 - \frac{1}{|V(F)|}\right) n}.
    \end{equation}
\end{proposition}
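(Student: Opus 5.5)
The plan is to take $G$ to be a disjoint union of cliques and compute its number of $F$-factors exactly; this is essentially the computation preceding the statement, made precise. Write $\ell \defeq |V(F)|$. Fix a large integer $c$, chosen in terms of $\ve$ and $F$. For every $n$ divisible by $c\ell$, let $G = \frac{n}{c\ell}\cdot K_{c\ell}$ and $m \defeq e(G) = (c\ell-1)n/2$. Letting $n$ range over all multiples of $c\ell$ gives infinitely many pairs $(n,m)$. An $F$-factor of $G$ is exactly a choice of an $F$-factor in each clique, because $F$ is connected and so each copy of $F$ lies inside one clique. Hence \eqref{eq:factors-in-clique} gives
$$N_F(n,m)\ge N_{\mathrm{factor}}(F;G)=\left(\frac{(c\ell)!}{c!\,|\Aut(F)|^c}\right)^{\frac{n}{c\ell}}.$$

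Next I estimate the factorials using Proposition~\ref{prop:factorial-linear} and \eqref{eq:stirling}. The lower bound gives $((c\ell)!)^{1/(c\ell)}\ge c\ell/e$. For the denominator I will bound $c!$ from above by Stirling: $(c!)^{1/(c\ell)} \le \left((1+\delta)\,c/e\right)^{1/\ell}$ once $c$ is large in terms of $\delta$. Substituting these gives
$$N_{\mathrm{factor}}(F;G)\ \ge\ \left(\frac{c\ell}{e}\right)^{n}\left(\frac{e}{(1+\delta)c}\right)^{n/\ell}|\Aut(F)|^{-n/\ell} =\left(\frac{\ell}{(1+\delta)|\Aut(F)|}\right)^{n/\ell}\left(\frac{c\ell}{e}\right)^{(1-1/\ell)n}.$$
This bookkeeping corrects the displayed argument, which drops the $c!$ factor too early; that factor has to be retained to produce the gain $\ell^{n/\ell}$.

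Finally I convert to $m$. Since $2m/n=c\ell-1\le c\ell$, we have $c\ell/e\ge 2m/(en)$. Pulling the factor $\left(\ell/|\Aut(F)|\right)^{n/\ell}$ into the bracket with exponent $(1-1/\ell)n$ turns it into $\left(\ell/|\Aut(F)|\right)^{1/(\ell-1)}$. In the same way the factor $(1+\delta)^{-1/(\ell-1)}$ becomes at least $1-\ve$ provided $\delta$ is chosen small enough in terms of $\ve$ and $\ell$. This yields \eqref{eq:lower-factor}.

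The only delicate step is the ordering of constants. First choose $\delta=\delta(\ve,\ell)$, then choose $c$ large enough that the Stirling error $(\sqrt{2\pi c}\,e^{1/(12c)})^{1/c}\le 1+\delta$. Every error term is then uniform in $n$, so the bound holds for every multiple $n$ of $c\ell$.
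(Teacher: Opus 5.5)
Your proof is correct and follows essentially the same route as the paper: take $\frac{n}{c\ell}\cdot K_{c\ell}$, count its $F$-factors via \eqref{eq:factors-in-clique}, estimate $((c\ell)!)^{1/(c\ell)}$ from below and $(c!)^{1/c}$ from above by Stirling/Proposition~\ref{prop:factorial-linear}, and finish with $2m/n\le c\ell$. One small correction to your commentary: the paper's display does keep the $c!$ (it writes $\bigl((c|V(F)|)!/c!\bigr)^{n/(c|V(F)|)}$ and absorbs it in the next step), so your bookkeeping reproduces the paper's argument rather than correcting it.
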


\begin{remark}
    By the monotonicity of $N_F(n, m)$ with respect to $m$ and a slight modification of the aforementioned construction, \eqref{eq:lower-factor} holds for $\Omega(n) \leq m \leq o(n^2)$. We omit the proof. 
\end{remark}

Note that, since Corollary~\ref{cor:factor-kruskal-katona} is a direct consequence of Theorem~\ref{thm:main-factors} and Proposition~\ref{prop:lower-disj-cliques} shows that Corollary~\ref{cor:factor-kruskal-katona} is asymptotically sharp, Theorem~\ref{thm:main-factors} is asymptotically sharp as well.

We now provide another construction for the number of $F$-factors, which is better than \eqref{eq:lower-factor} for some non-Hamiltonian graphs $F$. The basic idea is to consider an optimal proper coloring of $F$. Let $\chi(F) = r$ and $a_1, \dots, a_r$ be the size of each color class of $F$, respectively. For an integer $c > 0$, let $H \defeq K_{ca_1, \dots, ca_r}$ be the complete $r$-partite. Then we take $\frac{n}{c|V(F)|}\cdot H$ as a host graph. Then this construction gives a better bound than \eqref{eq:lower-factor} for some cases, including highly unbalanced bipartite graphs. For simplicity, we only demonstrate the case of unbalanced bipartite graphs.

Let $F$ be a connected bipartite graph on the bipartition sizes $a$ and $b$ where $a \neq b$. Observe that the number of $F$ copies in $K_{a, b}$ is $\frac{a!b!}{|\Aut(F)|}$ since $a\neq b$. Thus, for an integer $c \geq 1$, we have
\begin{equation}\label{eq:factors-in-biclique}
    N_{\mathrm{factor}}(F; K_{ca, cb}) = \frac{(ca)!(cb)!}{c! \cdot (a! b!)^{c}} \cdot \left( \frac{a!b!}{|\Aut(F)|} \right)^c = \frac{(ca)!(cb)!}{c! \cdot |\Aut(F)|^c}.
\end{equation}
Let $n$ be a positive integer divisible by $c(a + b)$ and let $G$ be an $n$-vertex graph which is $\frac{n}{c(a + b)} \cdot K_{ca, cb}$. Then $|E(G)| = \frac{cab}{a+b}n =: m$. Without loss of generality, assume $a< b$ and $r\defeq b/a - 1$. Then by \eqref{eq:factors-in-biclique}, it holds that
\begin{equation*}
    N_{\mathrm{factor}}(F; G) = \left( \frac{(ca)!(cb)!}{c! \cdot |\Aut(F)|^c} \right)^{\frac{n}{c(a + b)}} \geq \left(\frac{1}{|\Aut(F)|} \right)^{\frac{n}{|V(F)|}} \cdot \left( \frac{(ca)!(cb)!}{c!}\right)^{\frac{n}{c|V(F)|}}.
\end{equation*}
For large enough $c$, by Proposition~\ref{prop:factorial-linear}, we have
\allowdisplaybreaks
\begin{align*}
    N_{\mathrm{factor}}(F; G) &\geq \left(\frac{1}{|\Aut(F)|} \right)^{\frac{n}{|V(F)|}} \cdot \left( \frac{(ca)!(cb)!}{c!}\right)^{\frac{n}{c|V(F)|}}\\
    &\geq \left( \frac{(1-\ve)}{|\Aut(F)|} \right)^{\frac{n}{|V(F)|}} \cdot \left( a^a b^b \right)^{\frac{n}{|V(F)|}} \left( \frac{c}{e} \right)^{\left(1 - \frac{1}{|V(F)|} \right)n}\\
    &= \left((1 - \ve)\left( \frac{|V(F)|}{|\Aut(F)|}\right) \right)^{\frac{n}{|V(F)|}} \cdot \left( \frac{2cab}{e(a+b)} \right)^{\left(1 - \frac{1}{|V(F)|} \right)n}\\
    &\qquad \qquad \cdot \left(2^{-|V(F)| + 1}\cdot (a+b)^{|V(F)|-2} \cdot a^{a - |V(F)| + 1} \cdot b^{b - |V(F)| + 1} \right)^{\frac{n}{|V(F)|}}\\
    &= \left((1 - \ve)\left( \frac{|V(F)|}{|\Aut(F)|}\right) \right)^{\frac{n}{|V(F)|}} \cdot \left( \frac{2cab}{e(a+b)} \right)^{\left(1 - \frac{1}{|V(F)|} \right)n}\\
    &\qquad \qquad \cdot \left(2^{-(2 + r)a + 1} \cdot (2+r)^{(2+r)a - 2} \cdot (1 + r)^{-a + 1} \right)^{\frac{n}{|V(F)|}}
\end{align*}
Observe that for all large enough $a$ and sufficiently large $r$ compared with $a$, we have
$$
    2^{-(2 + r)a + 1} \cdot (2+r)^{(2+r)a - 2} \cdot (1 + r)^{-a + 1} > 1.
$$
Since $2m/n = \frac{2cab}{a+b}$, we summarize as follows.

\begin{proposition}\label{prop:lower-disj-bicliques}
    There exist universal constants $a_0$ and $r_0$ with the following property.
    Let $F$ be a connected bipartite graph with bipartition sizes $a$ and $b$, where $a \geq a_0$ and $b \geq r_0 a$. Then there exists a constant $c_F > 1$, depending only on $F$, such that for every $\ve > 0$, there are infinitely many pairs $(n,m)$ satisfying
    \begin{equation*}
        N_F(n, m) \geq c_F^n \cdot \left[(1 - \ve) \left( \frac{|V(F)|}{|\mathrm{Aut}(F)|} \right)^{\frac{1}{|V(F)| - 1}} \frac{2m}{e n} \right]^{\left(1 - \frac{1}{|V(F)|}\right) n}.
    \end{equation*}
\end{proposition}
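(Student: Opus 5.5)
The construction is $G = \frac{n}{c(a+b)}\cdot K_{ca,cb}$, the disjoint union of complete bipartite graphs computed just before the statement. I will fix $F$ with $a\ge a_0$ and $b\ge r_0 a$, and take $c$ large, depending on $\ve$ and $F$. With $n$ ranging over multiples of $c(a+b)$ and $m = \frac{cab}{a+b}n$, this gives infinitely many pairs $(n,m)$. Every copy of $F$ in $K_{ca,cb}$ has its $a$-side in the $ca$-part and its $b$-side in the $cb$-part, because $a\neq b$ and $F$ is connected. Hence \eqref{eq:factors-in-biclique} holds, and so does the displayed lower bound
\[
N_{\mathrm{factor}}(F;G)\ \ge\ \Bigl(\tfrac{(1-\ve)|V(F)|}{|\Aut(F)|}\Bigr)^{n/|V(F)|}\Bigl(\tfrac{2m}{en}\Bigr)^{(1-1/|V(F)|)n}\cdot \Phi(a,r)^{n/|V(F)|},
\]
where $r=b/a-1$ and $\Phi(a,r)=2^{-(2+r)a+1}(2+r)^{(2+r)a-2}(1+r)^{-a+1}$.

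The first step is to recheck the use of Proposition~\ref{prop:factorial-linear}. Write $(ca)!(cb)!/c! \ge (ca/e)^{ca}(cb/e)^{cb}\cdot(c!)^{-1}$. Then bound $c!\le (1+\ve')^{c}((c+C)/e)^{c}$ by the proposition. For $c$ large relative to $C$ and $\ve$, the quantity $((c+C)/c)^{c/(c|V(F)|)\cdot n}$ is at most $(1+\ve)^{n/|V(F)|}$. This yields the factor $(a^ab^b)^{n/|V(F)|}(c/e)^{(1-1/|V(F)|)n}$, up to $(1-\ve)^{n/|V(F)|}$. Rewriting $c/e$ via $2m/n=2cab/(a+b)$ is pure algebra and produces $\Phi$.

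The main step is to show that $\Phi(a,r)>1$ for $a\ge a_0$ and $r\ge r_0-1$, with $a_0$ and $r_0$ universal. Taking logarithms, I need
\[
\log\Phi=\bigl((2+r)a-2\bigr)\log\tfrac{2+r}{2} - 2\log 2\cdot 0 + \bigl(-a+1\bigr)\log(1+r) + (\text{bounded adjustment}) > 0.
\]
More precisely, $\log\Phi=((2+r)a-2)\log(2+r)-((2+r)a-1)\log 2-(a-1)\log(1+r)$. The dominant term is $(2+r)a\log\frac{2+r}{2}$, which for $r\ge r_0$ (say $r_0\ge 10$) is at least $a\log(1+r)\cdot(2+r)\cdot\frac{\log((2+r)/2)}{\log(1+r)}$. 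That is much larger than $a\log(1+r)+2\log(2+r)+\log 2$ once $a\ge1$ and $r$ exceeds a fixed constant. So in fact $a_0=1$ with a suitable $r_0$ should work. I would verify the inequality $(2+r)\log\frac{2+r}{2}\ge 2\log(1+r)+2\log(2+r)$ for $r\ge r_0$ by monotonicity, since the left side grows like $r\log r$ and the right side like $\log r$.

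Finally, I set $c_F=\Phi(a,r)^{1/|V(F)|}>1$. This depends only on $F$, and not on $\ve$, because $\Phi$ involves only $a$ and $b$. Combining this with the displayed bound, and noting $(1-\ve)^{n/|V(F)|}=\bigl((1-\ve)^{1/(|V(F)|-1)}\bigr)^{(1-1/|V(F)|)n}$, I get the stated inequality after renaming $\ve$. The only delicate point is keeping $c_F$ independent of $\ve$. This holds because every $\ve$-dependence is absorbed through the choice of $c$, and $c$ cancels from $\Phi$.
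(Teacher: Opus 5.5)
Your proposal is correct and follows the paper's proof essentially step for step: the same construction $\frac{n}{c(a+b)}\cdot K_{ca,cb}$, the same use of Proposition~\ref{prop:factorial-linear} with $c$ chosen large depending on $\ve$, and the same $\ve$-independent factor $\Phi(a,r)=2^{-(2+r)a+1}(2+r)^{(2+r)a-2}(1+r)^{-a+1}$. Your explicit check that $\log\Phi>0$ for every $a\ge 1$ once $r$ exceeds an absolute constant adds something the paper only asserts, and it shows that one may take $a_0=1$.
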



\section{Kahn--Lov\'{a}sz theorem for $F$-factors}\label{sec:F-factors}

The purpose of this section is to prove generalizations of the Kahn--Lov\'{a}sz theorem to Hamiltonian graphs (Theorem~\ref{thm:main-factors}) and to general connected graphs (Theorem~\ref{thm:general-factors}).
To this end, we begin with the following reduction lemma, which allows us to reduce to the cases of cycle factors and tree factors, respectively.

\begin{lemma}\label{lem:aut-double-counting}
    Let $F, H$, and $G$ be graphs such that $H$ is a spanning subgraph of $F$. Then we have 
    $$
        N(F; G) \leq \frac{|\Aut(H)|}{|\Aut(F)|} \cdot N(H; G).
    $$
\end{lemma}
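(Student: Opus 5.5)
The plan is to pass to embeddings via \eqref{eq:embedding-automorphism} and compare embedding counts directly. Since $H$ is a spanning subgraph of $F$, we have $V(H) = V(F)$ and $E(H) \subseteq E(F)$. Hence every injective homomorphism $\phi \colon V(F) \to V(G)$ of $F$ into $G$ is also an injective homomorphism of $H$ into $G$, since it maps every edge of $H \subseteq F$ to an edge of $G$. This gives the inclusion
$$
    \mathrm{Emb}(F; G) \subseteq \mathrm{Emb}(H; G),
$$
so $|\mathrm{Emb}(F; G)| \leq |\mathrm{Emb}(H; G)|$.

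Next I would divide by the automorphism group sizes using \eqref{eq:embedding-automorphism} applied to both $F$ and $H$:
$$
    N(F; G) = \frac{|\mathrm{Emb}(F; G)|}{|\Aut(F)|} \leq \frac{|\mathrm{Emb}(H; G)|}{|\Aut(F)|} = \frac{|\Aut(H)|}{|\Aut(F)|} \cdot \frac{|\mathrm{Emb}(H;G)|}{|\Aut(H)|} = \frac{|\Aut(H)|}{|\Aut(F)|}\, N(H; G).
$$
This is exactly the claimed inequality.

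I expect no genuine obstacle here. The only point to double-check is that \eqref{eq:embedding-automorphism} holds, namely that each copy of $F$ in $G$ corresponds to exactly $|\Aut(F)|$ embeddings. This follows because $\Aut(F)$ acts freely on $\mathrm{Emb}(F;G)$ by precomposition, and its orbits are precisely the copies, i.e.\ the images $\phi(F)$ viewed as subgraphs. I would also note, without needing it, that $\Aut(F)$ need not be a subgroup of $\Aut(H)$, which is why the argument goes through embeddings rather than through group-theoretic comparison of copies.
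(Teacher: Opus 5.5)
Your proof is correct, but it takes a different and shorter route than the paper. The paper works with unlabelled copies. It forms the bipartite incidence graph between copies of $F$ and copies of $H$ in $G$. Each copy of $F$ contains exactly $N(H;F)$ copies of $H$, while each copy of $H$ lies in at most $\mathrm{ext}(H;F)$ copies of $F$; spanning-ness is used here, since every such copy of $F$ must live on the same vertex set. The paper then evaluates the ratio $\mathrm{ext}(H;F)/N(H;F)=|\Aut(H)|/|\Aut(F)|$ by a separate double count of pairs $H'\subseteq F'$ inside $K_{v(F)}$.

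You instead use spanning-ness to get $\mathrm{Emb}(F;G)\subseteq \mathrm{Emb}(H;G)$ directly, and then apply \eqref{eq:embedding-automorphism} to both $F$ and $H$. This absorbs both the extension count and the auxiliary identity into the free action of the automorphism groups on embeddings, which you justify correctly. Your route is essentially a one-liner. Its intermediate bound $N(F;G)\le |\mathrm{Emb}(H;G)|/|\Aut(F)|$ is exactly the form the paper actually invokes in \eqref{eq:factor-emb}, with $F$ replaced by $m\cdot F$ and $H$ by $m\cdot C_\ell$.

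The paper's route stays entirely at the level of copies and passes through the slightly more explicit inequality \eqref{eq:ext-inequality}. The two arguments carry the same content, though: the paper's Claim is an embedding count in $K_{v(F)}$ in disguise, so nothing is lost by your shortcut.
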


\begin{proof}[Proof of Lemma~\ref{lem:aut-double-counting}]
    Denote $\mathrm{ext}(H; F)$ by the number of distinct ways to extend a given $H$ to $F$.

    \begin{claim}\label{clm:identity}
        $|\Aut(H)| \cdot N(H; F) = |\Aut(F)| \cdot \mathrm{ext}(H; F)$.
    \end{claim}

    \begin{claimproof}[Proof of Claim~\ref{clm:identity}]
        Let $n = v(F) = v(H)$. Consider the collection $\calC$ of pairs $(F', H')$ such that $H' \subseteq F' \subseteq K_n$ such that $F'$ and $H'$ are isomorphic to $F$ and $H$, respectively. Then the number of copies of $F$ in $K_n$ is $\frac{n!}{|\Aut(F)|}$, and by the definition, each such copy contains exactly $N(H; F)$ copies of $H$. Hence, we have
        \begin{equation}\label{eq:identity-1}
            |\calC| = \frac{n!}{|\Aut(F)|}N(H; F).
        \end{equation}
        On the other hand, the number of copies of $H$ in $K_n$ is $\frac{n!}{|\Aut(H)|}$. Also, by its definition, the number of distinct ways to extend each of such $H$ copies is exactly $\mathrm{ext}(H; F)$. Thus we have
        \begin{equation}\label{eq:identity-2}
            |\calC| = \frac{n!}{|\Aut(H)|} \cdot \mathrm{ext}(H; F).
        \end{equation}
        From \eqref{eq:identity-1} and \eqref{eq:identity-2}, we obtain the desired identity.
    \end{claimproof}

    Let $\calF$ and $\calH$ be the set of copies of $F$ and $H$ in $G$, respectively. Observe that $|\calF| = N(F; G)$ and $|\calH| = N(H; G)$. We now consider an auxiliary bipartite graph $M$ on the bipartition $\calF \cup \calH$ such that $(F', H') \in E(M)$ if and only if $H' \subseteq F'$. Then for each $H' \in \calH$, the degree of $H'$ in $M$ is at most $\mathrm{ext}(H; F)$, but for all $F' \in \calF$, its degree in $M$ is $N(H; F)$. This implies
    $
        e(M) = \sum_{F'\in \calF} N(H; F) \leq \sum_{H' \in \calH} \mathrm{ext}(H; F).
    $
    Hence, we deduce that
    \begin{equation}\label{eq:ext-inequality}
        N(F; G) \leq \frac{\mathrm{ext}(H; F)}{N(H; F)} \cdot N(H; G).
    \end{equation}
    From Claim~\ref{clm:identity}, we have the identity
    $
        \mathrm{ext}(H; F)/N(H; F) = |\Aut(H)|/|\Aut(F)|.
    $
    Together with \eqref{eq:ext-inequality}, we obtain
    $$
        N(F; G) \leq \frac{|\Aut(H)|}{|\Aut(F)|} \cdot N(H; G).
    $$
    This completes the proof.
    
\end{proof}

\subsection{Proof of Theorem~\ref{thm:main-factors}}\label{subsec:proof-main-factor}

We prove Theorem~\ref{thm:main-factors} first. Together with Lemma~\ref{lem:aut-double-counting}, it suffices to show the cycle factor case. For later purposes, we state it not in terms of $N_{\mathrm{factor}}(C_{\ell}; G)$ but in terms of the number of embeddings.

\begin{theorem}\label{thm:cycle-factor}
    For all integer $\ell \geq 2$ and real number $\ve > 0$, there exists a constant $C = C(\ell, \ve) > 0$ such that 
    \begin{equation*}
        \bigg|\mathrm{Emb}\left(\frac{n}{\ell}\cdot C_{\ell}; G\right)\bigg| \leq \left( \frac{n}{\ell} \right)! \cdot \left( (1  + \ve) \ell \right)^{\frac{n}{\ell}} \cdot \left( \prod_{v\in V(G)} \frac{d_G(v) + C}{e} \right)^{1 - \frac{1}{\ell}}
    \end{equation*}
    holds for all $n$-vertex graph $G$ where $n$ is divisible by $\ell$.
\end{theorem}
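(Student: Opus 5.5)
We follow the strategy outlined in Section~\ref{subsec:overview}. Write $k=n/\ell$. Every embedding $\phi$ of $k\cdot C_\ell$ into $G$ labels the vertices of each copy of $C_\ell$ by $0,1,\dots,\ell-1$ in cyclic order. Setting $V_i=\phi(\{\text{vertices labelled } i\})$ therefore yields an $\ell$-equipartition $\calP=(V_0,\dots,V_{\ell-1})$ in which every cycle visits $V_0,V_1,\dots,V_{\ell-1}$ successively. Conversely, fix such a $\calP$ and an ordering of the $k$ copies. The embeddings inducing $\calP$ then correspond, up to the factor $(n/\ell)!$ for ordering the cycles, to choices of perfect matchings $M_i$ in $G[V_i,V_{i+1}]$ for all $i\in\mathbb{Z}_\ell$ whose composition closes up into $k$ cycles of length $\ell$. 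Call the number of such structures $X(\calP)$, so that $|\mathrm{Emb}|\le (n/\ell)!\sum_{\calP}X(\calP)$; I would check the factor $(n/\ell)!$ carefully against the labelling convention.

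\textbf{Bound for a fixed $\calP$.} Fix $j\in\mathbb{Z}_\ell$ and drop the matching between $V_j$ and $V_{j+1}$. The closing-up condition then determines that matching from the others, so $X(\calP)$ is at most the product over $i\neq j$ of the number of perfect matchings in $G[V_i,V_{i+1}]$. By Theorem~\ref{thm:bregman-minc}, applied with $L=V_i$, and by Proposition~\ref{prop:factorial-linear}, this is at most $\prod_{i\ne j}\prod_{v\in V_i}(1+\ve)\frac{d_i(v)+C}{e}$, where $d_i(v)=|N_G(v)\cap V_{i+1}|$; this is exactly $d^+_G(v;(\calP,D))$ for $D$ the directed cycle. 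Taking the geometric mean over the $\ell$ choices of $j$, each vertex appears in exactly $\ell-1$ of the $\ell$ estimates, giving
\[
X(\calP)\le (1+\ve)^{n}\prod_{v}\left(\frac{d^+(v;\calP)+C}{e}\right)^{1-\frac1\ell}.
\]
We absorb $(1+\ve)^n$ by rescaling $\ve$; it must in the end combine into $(1+\ve)^{n/\ell}$, so we choose the $\ve$ in Proposition~\ref{prop:factorial-linear} small enough relative to $\ell$.

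\textbf{Summing over partitions.} By H\"older's inequality with exponents $\ell/(\ell-1)$ and $\ell$,
\[
\sum_{\calP}\prod_v(\cdot)^{1-\frac1\ell}\le \left(\sum_{\calP}\prod_v \frac{d^+(v;\calP)+C}{e}\right)^{1-\frac1\ell}\cdot N_{\mathrm{part}}^{1/\ell},
\]
where $N_{\mathrm{part}}\le \ell^{n}$ is the number of $\ell$-equipartitions. This contributes $\ell^{n/\ell}$, matching the target. It remains to show $\sum_{\calP}\prod_v(d^+(v;\calP)+C)\le(1+\ve)^{n}\prod_v(d_G(v)+C')$. This is the content of Lemma~\ref{lem:partition-cycle-sum}, and it is the main obstacle. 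Expand the product: add $C$ loops at each vertex and consider both orientations of every edge. Then $\prod_v(d^+(v;\calP)+C)$ counts the functional digraphs $f$, each vertex choosing one out-edge or loop, such that every non-loop choice goes from $V_i$ to $V_{i+1}$. Swapping the order of summation, we fix $f$ and count compatible partitions. Each component of $f$ has its partition labels determined by one vertex, so there are at most $\ell^{c(f)}$ of them, where $c(f)$ is the number of components.

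\textbf{Absorbing the weight $\ell^{c(f)}$.} This is the hard step, and it is Lemma~\ref{lem:exponential-count}. The total number of $f$ is $\prod_v(d_G(v)+C)$. I would attempt to absorb $\ell^{c(f)}$ by increasing the loop constant, trading $C$ for a larger $C'$ and bounding $\sum_f \ell^{c(f)}\le(1+\ve)^n\prod_v(d(v)+C')$. The mechanism is a per-vertex charging: a component of size $s$ receives weight $\ell$, i.e.\ $\ell^{1/s}$ per vertex. Small components, those of size at most $s_0$ with $\ell^{1/s_0}\le1+\ve$, require vertices to point within a small set; loops already supply these. So we replace $C$ by $\ell C$ for loop-created components, and for small non-loop components bound their number by a generating-function or induction argument over component sizes. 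I would first try to prove $\sum_f\prod_{\text{comp}}w(\text{comp})\le\prod_v(d(v)+C')$ by exposing vertices one at a time.
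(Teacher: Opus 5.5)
Your reduction is the paper's proof step for step, and it is correct up to the point where you must bound $\sum_f \ell^{c(f)}$. That includes the identity $|\mathrm{Emb}_{\calP}(\tfrac{n}{\ell}\cdot C_\ell;G)|=(n/\ell)!\,X(\calP)$, dropping one matching and applying Theorem~\ref{thm:bregman-minc} with Proposition~\ref{prop:factorial-linear}, and the geometric mean over the dropped index. It also includes H\"older's inequality contributing $\ell^{n/\ell}$, and the reduction of Lemma~\ref{lem:partition-cycle-sum} to out-degree-one subdigraphs of the doubled graph with loops, each compatible with at most $\ell^{c(f)}$ partitions (the paper's Claims~\ref{clm:cyclic-labelling} and~\ref{clm:double-counting-components}). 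Had you simply invoked Lemma~\ref{lem:exponential-count}, as the paper's proof does through Lemma~\ref{lem:partition-cycle-sum}, you would be done. But you treat that lemma as still to be proved, and for it you give only a plan. This is exactly where the nontrivial content lies, so as a self-contained proof the proposal has a gap there.

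Your plan gets two things right. Components of size at least $s_0$ (with $\ell^{1/s_0}\le 1+\ve$) contribute at most $\ell^{n/s_0}\le(1+\ve)^n$. Components whose cycle is a loop are in bijection with vertices choosing a loop, so their weight is absorbed exactly by passing from $C$ to $\ell C$ loops. What is missing is any bound for the small components. Neither suggested route (a generating function over component sizes, or exposing vertices one at a time) is carried out, and the latter faces the obstruction that a vertex's component is not determined when its out-arc is exposed.

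The idea you need, which is also what the paper uses, is to root each small component at a vertex $r$ on its unique cycle. Deleting the roots' out-arcs keeps every component connected. So first choose the out-arcs of all non-roots freely, in at most $\prod_{v\notin R}(d_G(v)+C)$ ways; this already fixes each root's component. Then $r$ must point into that component, leaving at most $s_0+C$ options instead of $d_G(r)+C'$. Writing $\ell^{c_{\mathrm{small}}(f)}=\sum_{A}(\ell-1)^{|A|}$ over sets $A$ of small components and summing over root sets $R$ gives $\sum_f\ell^{c_{\mathrm{small}}(f)}\le\bigl(1+(\ell-1)(s_0+C)/C'\bigr)^n\prod_v(d_G(v)+C')$. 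This is at most $(1+\ve)^n\prod_v(d_G(v)+C')$ once $C'$ is large.

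The paper runs the same rooting argument but thresholds on the number of components rather than their size. If $c(f)\le T=\log_\ell((1+\ve)^n/2)$, the weight is trivially small. Otherwise at least $T/2=\Omega(n)$ components have size at most $2n/T=O(1)$, and rooting them shows such $f$ are rare enough to pay even the crude weight $\ell^n$.
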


We note that $C_2$ is an edge. Keeping Theorem~\ref{thm:cycle-factor} in mind, the proof of Theorem~\ref{thm:main-factors} follows. 

\begin{proof}[Proof of Theorem~\ref{thm:main-factors}]
    Let $v(F) = \ell \geq 2$ and $v(G) = n$. If $n$ is not divisible by $\ell$, then obviously $N_{\mathrm{factor}}(F; G) = 0$. Hence, we may assume that $n$ is divisible by $\ell$ and let $m \defeq \frac{n}{\ell}$.
    Since $F$ is Hamiltonian, it contains $C_{\ell}$ as a subgraph. Then by Theorem~\ref{thm:cycle-factor}, for a given $\ve > 0$, there exists a constant $C$ that depends only on $\ve$ and $\ell$ such that
    \begin{equation}\label{eq:cycle-factor-eq}
        |\mathrm{Emb}\left(m \cdot C_{\ell}; G\right)| \leq m! \cdot \left( (1  + \ve) \ell \right)^{\frac{n}{\ell}} \cdot \left( \prod_{v\in V(G)} \frac{d_G(v) + C}{e} \right)^{1 - \frac{1}{\ell}}.
    \end{equation}
    We note that \eqref{eq:embedding-automorphism} implies 
    $
        N(m\cdot C_{\ell}; G) = |\mathrm{Emb}(m \cdot C_{\ell}; G)|/|\Aut(m \cdot C_{\ell})|
    $,
    so by Lemma~\ref{lem:aut-double-counting}, we have
    \begin{equation}\label{eq:factor-emb}
        N_{\mathrm{factor}}(F; G) = N(m \cdot F; G) \leq \frac{|\mathrm{Emb}(m \cdot C_{\ell}; G)|}{|\Aut(m \cdot F)|} = \frac{1}{m!} \cdot \left(\frac{1}{|\Aut(F)|}\right)^{\frac{n}{\ell}} \cdot |\mathrm{Emb}(m \cdot C_{\ell}; G)|.
    \end{equation}
    By combining \eqref{eq:cycle-factor-eq} and \eqref{eq:factor-emb}, we obtain
    $$
        N_{\mathrm{factor}}(F; G) \leq \left( (1  + \ve) \frac{\ell}{|\Aut(F)|} \right)^{\frac{n}{\ell}} \cdot \left( \prod_{v\in V(G)} \frac{d_G(v) + C}{e} \right)^{1 - \frac{1}{\ell}}.
    $$
    This completes the proof.
\end{proof}

To prove Theorem~\ref{thm:cycle-factor}, we consider pairs of partitions and digraphs as described in Section~\ref{subsec:overview}. For an integer $\ell \geq 2$, let $\ori{C_{\ell}}$ denote the labelled oriented cycle of length $\ell$ on the vertex set $\mathbb{Z}_{\ell}$ whose arcs are in the form of $i \to (i+1)$. Note that $\ori{C_{2}}$ is a $(\mathbb{Z}_2)$-labelled digon.

\begin{lemma}\label{lem:partition-cycle-sum}
    Let $n, \ell \geq 2$ be integers where $n$ is divisible by $\ell$. Then for all real number $\ve > 0$ and $C' > 0$, there exists $C > 0$ only depending on $\ve, \ell$, and $C'$ such that for all $n$-vertex graph $G$, it holds that
    \begin{equation}\label{eq:key-inequality}
        \sum_{\calP: \text{ $\ell$-partition}} \prod_{v\in V(G)} \bigg[ d^+_G\left(v; \left(\calP, \overrightarrow{C_{\ell}} \right) \right) + C' \bigg]
        \leq (1 + \ve)^n \prod_{v\in V(G)} (d_G(v) + C).
    \end{equation}
\end{lemma}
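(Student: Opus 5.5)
The plan is to expand the left-hand side of \eqref{eq:key-inequality} combinatorially and then reverse the order of summation, as in Section~\ref{subsec:overview}. Replacing $C'$ by $\lceil C' \rceil$ only enlarges the left-hand side, so assume $C'$ is a positive integer. Give every vertex $v$ exactly $C'$ labelled loops. For a fixed $\ell$-partition $\calP$, the product $\prod_v [d^+_G(v;(\calP,\ori{C_\ell}))+C']$ counts functional digraphs $D$ on $V(G)$: each vertex $v$ chooses one out-arc, either to a neighbour lying in the next class or to one of its loops. Swapping the two sums gives
\[
\text{LHS of \eqref{eq:key-inequality}} = \sum_{D} \#\{\calP : \calP \text{ compatible with } D\}.
\]
Here $D$ ranges over all $\prod_v (d_G(v)+C')$ functional digraphs in which each $v$ points to a neighbour or a loop, and compatibility means that $u$ lies in the class after that of $v$ for every non-loop arc $v\to u$. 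In each weak component of $D$, fixing the class of one vertex determines the classes of all others, because non-loop arcs force successive classes. Hence the number of compatible partitions is at most $\ell^{|\mathrm{Comp}(D)|}$. Each component contains exactly one cycle, where a loop counts as a cycle of length $1$. Therefore $|\mathrm{Comp}(D)| = \lambda(D) + X(D)$, where $\lambda(D)$ is the number of vertices choosing a loop and $X(D)$ is the number of cycles of length at least $2$.

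Next I charge these weights vertex by vertex. Fix a large threshold $K$ and a large length $L$, both depending on $\ve$ and $\ell$. Call a vertex \emph{low} if $d_G(v)\le K$ and \emph{high} otherwise. Give each low vertex a free factor $\ell$; this pays for any component whose cycle contains a low vertex. Give each loop a factor $\ell$. Cycles of length at least $L$ number at most $n/L$, so they cost at most $\ell^{n/L}\le(1+\ve/3)^n$. What remains is a weight $\ell^{Y(D)}$, where $Y$ counts cycles of length in $[2,L)$ all of whose vertices are high. Summing the vertexwise weights over all $D$ bounds the left-hand side by
\[
(1+\ve/3)^n \prod_{v\ \mathrm{low}} \ell\,(d_G(v)+\ell C') \prod_{v\ \mathrm{high}} (d_G(v)+\ell C') \cdot \mathbb{E}\big[\ell^{Y}\big],
\]
where the expectation is over a uniformly random such $D$. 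Strictly, $D$ should be drawn with loops weighted by $\ell$, which again picks each out-arc independently from a product distribution, and each specific non-loop arc out of $v$ has probability at most $1/d_G(v)$. Choosing $C\ge \ell(K+\ell C')+\ell C'$ gives $\ell(d+\ell C')\le d+C$ for low vertices, and trivially $d+\ell C'\le d+C$ for high ones.

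The main obstacle is bounding $\mathbb{E}[\ell^{Y}]$ by $(1+\ve/3)^n$. Any two cycles of $D$ are vertex-disjoint and their arcs are chosen independently. Writing $\ell^{Y}=\sum_{S}(\ell-1)^{|S|}$ over subfamilies $S$ of present short high cycles, I get
\[
\mathbb{E}[\ell^{Y}] \le \exp\Big(\sum_{\gamma}(\ell-1)\Pr[\gamma\subseteq D]\Big),
\]
where $\gamma$ ranges over short cycles on high vertices and $\Pr[\gamma\subseteq D]\le\prod_{v\in\gamma}1/d_G(v)$. For a fixed length $j<L$, I bound the sum over closed walks $v_1\to\cdots\to v_j\to v_1$ step by step in the style of a random walk. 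Summing over $v_j,\dots,v_2$ along neighbours, each weighted by $1/d$, contributes at most $1$ per step, and the remaining factor $\sum_{v_1} 1/d_G(v_1)\le n/K$ uses that $v_1$ is high. Summing over $j<L$ makes the exponent at most $(\ell-1)Ln/K$, which is at most $n\log(1+\ve/3)$ once $K\gg \ell,L,\ve$. Combining the three $(1+\ve/3)^n$ factors yields \eqref{eq:key-inequality}, with $L$ chosen first, then $K$, then $C$.
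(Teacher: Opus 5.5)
Your argument is correct. Its first half coincides with the paper's cyclic-labelling argument: you expand the product as a count of out-degree-one digraphs with $C'$ loops per vertex, swap the order of summation, and bound the number of compatible partitions by $\ell^{|\mathrm{Comp}(D)|}$.

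The routes diverge in bounding $\sum_D \ell^{|\mathrm{Comp}(D)|}$.
\begin{itemize}
\item \emph{The paper's route.} The paper isolates this step as Lemma~\ref{lem:exponential-count} and uses a global dichotomy. Digraphs with at most $T=\log_\ell((1+\ve)^n/2)$ components cost at most $\ell^T$ each. A digraph with more components has at least $T/2$ components of size at most $2n/T$. Choosing a cycle vertex in each of them, whose out-arc must stay inside its small component, gains a factor $((2n/T+C')/C)^{T/2}$. This gain beats the crude losses $\ell^n$ and $\binom{n}{\lceil T/2\rceil}\le 2^n$.
\item \emph{Your route.} You use that components are in bijection with cycles and charge cycles locally. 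Loops are paid for by reweighting them by $\ell$. Cycles through low-degree vertices are absorbed into the additive constant, and long cycles cost at most $\ell^{n/L}$. Short cycles on high-degree vertices are handled by an exponential-moment bound under a product measure, using that distinct cycles of a functional digraph are vertex-disjoint and hence independent.
\end{itemize}
The paper's argument is cruder but needs only component sizes and a cycle vertex in each component. Yours needs the probabilistic bookkeeping, but it is quantitatively much stronger.
\begin{itemize}
\item Your $C$ is polynomial, roughly $O(\ell^2\log\ell/\ve^2+\ell^2 C')$.
\item The paper recovers only $C^{-T/2}$ against $\ell^n$, with $T\approx 2n\log(1+\ve)/\log\ell$. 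This forces $C$ to be at least about $\ell^{\log\ell/\log(1+\ve)}$, exponential in $1/\ve$.
\item You also need no separate treatment of small $n$.
\end{itemize}

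One justification is misstated, though harmlessly. In the closed-walk estimate, summing $v_2,\dots,v_j$ against the weights $1/d_G(v_1),\dots,1/d_G(v_{j-1})$ leaves the factor $1/d_G(v_j)$, not $1/d_G(v_1)$. Pointwise, the closed-walk weight at a fixed $v_1$ can far exceed $1/d_G(v_1)$: take $v_1$ of huge degree all of whose neighbours have degree just above $K$. Since $v_j$ is also high, the leftover factor is at most $1/K$, so your bound $n/K$ for each length $j$ still holds. Also, only two factors $(1+\ve/3)^n$ actually arise, and $(1+\ve/3)^2\le 1+\ve$ for $\ve\le 3$.
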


Before proving Lemma~\ref{lem:partition-cycle-sum}, we establish the following weighted counting lemma for spanning subdigraphs of out-degree one.

\begin{lemma}\label{lem:exponential-count}
    Let $\ell, C' \geq 0$ and $\ve > 0$ be real numbers. Then there exists $C = C(\ell, C', \ve) > 0$ such that the following holds. Let $H$ be an $n$-vertex digraph in which every vertex has at most $C'$ directed loops, and let $\calD$ be the set of spanning subdigraphs of $H$ in which every vertex has out-degree $1$. Then we have
    \begin{equation*}
        \sum_{D\in \calD} \ell^{|\mathrm{Comp}(D)|} \leq (1 + \ve)^n \prod_{v\in V(H)} (d^+_H(v) + C).
    \end{equation*}
\end{lemma}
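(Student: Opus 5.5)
The idea is to reduce to a random functional digraph and show that the factor $\ell^{|\mathrm{Comp}(D)|}$ costs only $(1+\ve)^n$ on average, once every out-degree is padded by a large constant. The key structural fact is that every $D\in\calD$ is a functional digraph. So each weakly connected component contains exactly one directed cycle, and $|\mathrm{Comp}(D)|$ equals the number of directed cycles of $D$, with loops counting as cycles of length $1$. We split these cycles into three types: loops, short cycles of length $2\le k\le L$, and long cycles of length $>L$. Here $L=L(\ell,\ve)$ is a large constant. Since long cycles are vertex-disjoint, there are at most $n/L$ of them. Choosing $L$ with $\ell^{1/L}\le 1+\ve/3$, they contribute at most $(1+\ve/3)^n$ in total. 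It therefore suffices to bound $\sum_{D}\ell^{\#\text{loops}(D)}\,\ell^{\#\text{short}(D)}$.

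\textbf{Step 1 (padding and loops).} Let $K$ be a large constant to be fixed later. Form $H'$ from $H$ by adding $K$ new loops at every vertex, and give every loop of $H'$ weight $\ell$ and every non-loop arc weight $1$. Each $D\in\calD$ is also a functional subdigraph of $H'$. Moreover, the weight $\ell^{\#\text{loops}(D)}$ is exactly the product of the arc weights of $D$. Let $w(v)\defeq d^+_H(v)+(\ell-1)\cdot(\text{number of loops at } v)+\ell K$ denote the total arc weight out of $v$ in $H'$. Then $w(v)\le d^+_H(v)+C$ for $C\defeq \ell(C'+K)$. Now sample a random $D'$ by letting each vertex independently choose an out-arc of $H'$ with probability proportional to its weight. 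Then
\[
\sum_{D\in\calD}\ell^{\#\text{loops}(D)}\ell^{\#\text{short}(D)}\le \prod_{v}w(v)\cdot \mathbb{E}\big[\ell^{X}\big],
\]
where $X$ is the number of short non-loop cycles of $D'$.

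\textbf{Step 2 (the main estimate).} It remains to show $\mathbb{E}[\ell^X]\le(1+\ve/3)^n$. Expand $\ell^X=\sum_{S}(\ell-1)^{|S|}$, where $S$ ranges over sets of cycles of $D'$. This gives $\mathbb{E}[\ell^X]=\sum_S(\ell-1)^{|S|}\Pr[S\subseteq D']$, with $S$ over families of vertex-disjoint directed cycles of $H$ of lengths $2,\dots,L$. By independence, $\Pr[S\subseteq D']=\prod_{\text{arcs }u\to u'\in S}1/w(u)$.

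Assign each cycle in $S$ a root, namely any of its vertices. Distinct cycles in $S$ have distinct roots, so
\[
\mathbb{E}[\ell^X]\le\prod_{v}\Big(1+(\ell-1)\sum_{\text{rooted cycles at }v}\prod_{\text{arcs}}\tfrac{1}{w(\cdot)}\Big).
\]
For a fixed root $v$ and length $k$, the sum over closed walks $v=v_1\to\cdots\to v_k\to v$ factorizes step by step. At each of the first $k-1$ steps, summing $1/w(v_i)$ over the out-arcs of $v_i$ gives at most $1$. The closing arc $v_k\to v$ is a single prescribed arc, so it contributes at most $1/w(v_k)\le 1/(\ell K)$. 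Hence the inner sum is at most $L/(\ell K)$. Therefore $\mathbb{E}[\ell^X]\le (1+L/K)^n\le e^{Ln/K}\le(1+\ve/3)^n$ once $K$ is large in terms of $L$ and $\ve$.

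\textbf{Step 3 (conclusion).} Multiplying the three contributions gives
\[
\sum_{D\in\calD}\ell^{|\mathrm{Comp}(D)|}\le(1+\ve)^n\prod_v\big(d^+_H(v)+C\big).
\]
The main obstacle is Step 2: the exponential weight on components must not pick up degree-dependent factors. The padding by $K$ loops is exactly what makes every closing arc have probability at most $1/(\ell K)$, uniformly in the degrees. The one bookkeeping point needing care is that the dominating product over roots really covers every disjoint family $S$. It does, since each such family corresponds to choosing at most one rooted cycle per vertex.
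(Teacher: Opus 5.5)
Your proof is correct, and it takes a genuinely different route from the paper's.

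\textbf{The paper's route.} It splits $\calD$ according to whether $|\mathrm{Comp}(D)|$ exceeds $T=\log_\ell((1+\ve)^n/2)$.
\begin{itemize}
\item Digraphs with few components are bounded trivially by $\ell^T|\calD|$.
\item For digraphs with many components, it pays the crude factor $\ell^n$. By Markov's inequality, at least $T/2=\Omega(n)$ components have size $O(n/T)=O(1)$. Choosing a cycle vertex in each such small component gives a root whose out-arc has only $O(1)$ choices instead of $d^+_H(v)+C$.
\item Summing over at most $2^n$ root sets, the saving $(O(1)/C)^{\Omega(n)}$ beats $(2\ell)^n$ once $C$ is huge.
\end{itemize}

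\textbf{Your route.} You never bound $|\mathrm{Comp}(D)|$ globally; instead you classify cycles by length.
\begin{itemize}
\item Long cycles are trivially at most $n/L$ in number.
\item Loops are absorbed exactly by reweighting: loops get weight $\ell$, and you pad with $K$ extra loops, which is where $C$ comes from.
\item Short non-loop cycles are handled by expanding $\ell^X=\sum_S(\ell-1)^{|S|}$ for a weighted random functional digraph. The padding forces each closing arc to have probability at most $1/(\ell K)$, uniformly in the degrees.
\end{itemize}

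\textbf{What each approach buys.} Your argument is more quantitative and more local. It gives $C=\ell(C'+K)$ with $K=O(\log\ell/\ve^2)$, i.e.\ polynomial in $\ell$ and $1/\ve$, and it works uniformly in $n$. The paper's argument, by contrast, yields a constant of order $\exp(\Theta((\log \ell)^2/\ve))$ and must dispose of small $n$ through the choice of $C$. In return, the paper's proof is a purely combinatorial counting argument with no probabilistic reweighting or cluster-type expansion.

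\textbf{Two bookkeeping points to state explicitly.}
\begin{itemize}
\item You need $\ell\ge 1$ so that the coefficients $(\ell-1)^{|S|}$ are nonnegative when you dominate $\sum_S$ by the product over roots. This is harmless, since for $\ell<1$ the left-hand side is at most $|\calD|=\prod_v d^+_H(v)$.
\item The bound $1/w(v_k)$ on the closing step uses that $H$ has no parallel non-loop arcs. This holds in the setting where the lemma is applied, and the paper's count of $2n/T+C'$ choices implicitly uses the same fact.
\end{itemize}
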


\begin{proof}[Proof of Lemma~\ref{lem:exponential-count}]
    By monotonicity, we may assume that $\ell \geq 2$ and $\ve < 1/2$. We choose $C$ at the end of the proof.
    Observe that $|\calD| = \prod_{v\in V(H)}d^+_H(v)$ as $\calD$ is the set of out-degree $1$-regular spanning subgraphs of $H$, and for each $v\in V(H)$, the number of ways to choose its unique out-neighbor is exactly $d^+_H(v)$.
    Set $T \defeq \log_{\ell} \left( (1+\ve)^n / 2\right)$ and let $\calD_1 \defeq \{D\in \calD: |\mathrm{Comp}(D)|\leq T\}$ and $\calD_2 \defeq \calD \setminus \calD_1$. 
    Then we have
    \begin{equation}\label{eq:D1}
        \sum_{D\in \calD_1} \ell^{|\mathrm{Comp}(D)|} \leq \ell^T |\calD_1| \leq \frac{1}{2}\cdot (1 + \ve)^n |\calD| \leq \frac{1}{2}\cdot (1 + \ve)^n \prod_{v\in V(H)} (d^+_H(v) + C).
    \end{equation}

    We now consider $\calD_2$. Obviously, we have
    \begin{equation}\label{eq:trivial-D2}
        \sum_{D\in \calD_2} \ell^{|\mathrm{Comp}(D)|} \leq \ell^n |\calD_2|,
    \end{equation}
    so it remains to bound the size of $\calD_2$.

    For every digraph $D\in \calD_2$, at least $\lceil |\mathrm{Comp}(D)|/2 \rceil \geq \lceil T/2 \rceil$ components of $D$ have size at most $2n / T$ by Markov's inequality. We enumerate the set $\mathrm{Comp}(D) = \{D_1, \dots, D_t\}$ with $|V(D_1)| \leq |V(D_2)| \leq \cdots \leq |V(D_t)|$. We note that $t \geq T$ and $|V(D_i)| \leq 2n/T$ for all $i \leq \lceil T/2 \rceil$.
    Also, since each component of $D$ is a connected out-degree-$1$ regular digraph, it contains a unique directed cycle as a subdigraph. Note that we also consider a loop to be a directed cycle. Hence, for each $i\in [t]$, the digraph $D_i$ contains a vertex $v_i$ that lies in a directed cycle. This means $D_i$ remains connected even after removing the unique directed edge from $v_i$.
    Denote $R(D) \subseteq V(D)$ by the set $\{v_1, \dots, v_{\lceil T/2 \rceil}\}$. 
    
    Then it holds that
    \begin{equation}\label{eq:D2-roots}
        |\calD_2| \leq \sum_{R\in \binom{V(H)}{\lceil T/2 \rceil}} |\{D\in \calD_2: R(D) = R\}|.
    \end{equation}

    Denote $\calD_R$ by the set $\{D\in \calD_2: R(D) = R\}$.
    We now fix $R\subseteq V(H)$ of size $\lceil T/2 \rceil$. To bound the quantity $|\calD_R|$, we first assign the unique out-neighbor of each vertex $v\in V(H)\setminus R$. Let $\calD'_R$ be the set of subdigraphs $D'$ of $H$ such that $d^+_{D'}(v) = 0$ if $v\in R$ and $d^+_{D'}(v) = 1$ otherwise. Since each vertex $v\in V(H)\setminus R$ has at most $d_H^+(v)$ possible out-neighbors, we have
    \begin{equation}\label{eq:D'R}
        |\calD'_R| \leq \prod_{v\in V(H)\setminus R} d^+_H(v).
    \end{equation}
    
    Let $D'$ be an arbitrary member of $\calD'_R$. We claim that the number of distinct ways to extend $D'$ to a member of $\calD_R$ is bounded. 
    Observe that for every $D\in \calD_R$, each $v\in R$ has its unique neighbor in the connected component of $D$ containing $v$, whose size is at most $2n/T$. Thus, for each $v\in R$, there are at most $2n/T + C'$ choices for the directed edge from $v$ when extending $D'$ to a member of $\calD_R$, since there are at most $C'$ directed loops at $v$. Hence, the number of distinct extensions of $D'$ to a member of $\calD_R$ is at most $(2n/T + C')^{\lceil T/2 \rceil}$. Together with \eqref{eq:D'R}, we obtain
    \allowdisplaybreaks
    \begin{align}
        |\calD_R| &\leq (2n/T + C')^{\lceil T/2\rceil} \cdot |\calD'_R| \nonumber \\
        &\leq (2n/T + C')^{\lceil T/2\rceil} \cdot \prod_{v\in V(H)\setminus R} d^+_H(v) \nonumber \\
        &\leq \prod_{r\in R} \frac{2n/T + C'}{d^+_H(r) + C} \cdot \prod_{v\in V(H)} (d^+_H(v) + C) \nonumber \\
        &\leq \left(\frac{2n/T + C'}{C}\right)^{T/2}  \cdot \prod_{v\in V(H)} (d^+_H(v) + C). \label{eq:DR-last}
    \end{align}

    Let $\alpha \defeq \frac{2 \log (1 + \ve)}{\log \ell} > 0$. By choosing $C$ large enough, we may assume that $n \geq \frac{2 \log \ell}{\log (1 + \ve)}$. Then we have
    $$
        T = \log_{\ell} \left( \frac{(1 + \ve)^n}{2}\right) \geq \alpha n.
    $$
    Then by \eqref{eq:trivial-D2}, \eqref{eq:D2-roots}, \eqref{eq:DR-last}, it holds that
    \allowdisplaybreaks
    \begin{align}
        \sum_{D\in \calD_2} \ell^{|\mathrm{Comp}(D)|} &\leq \ell^n |\calD_2| \nonumber \\
        &\leq \ell^n \sum_{R\in \binom{V(H)}{\lceil T/2 \rceil}} |\calD_R| \nonumber \\
        &\leq \ell^n \cdot 2^n \cdot \left(\frac{2n/T + C'}{C}\right)^{T/2}  \cdot \prod_{v\in V(H)} (d^+_H(v) + C) \nonumber \\
        &\leq \left[2\ell \cdot \left( \frac{2\alpha + C'}{C} \right)^{\alpha / 2}  \right]^n \cdot \prod_{v\in V(H)} (d^+_H(v) + C). \label{weighted-D2-last}
    \end{align}
    Since $\alpha$ is a positive constant that only depends on $\ell, \ve, C'$, there exists large enough constant $C_0 > 0$ such that 
    $\left[2\ell \cdot \left( \frac{2\alpha + C'}{C} \right)^{\alpha / 2}  \right]^n \leq \frac{1}{2} \cdot (1 + \ve)^n$ holds. We now take $C = C_0$. Then by \eqref{weighted-D2-last}, we finally deduce that
    \begin{equation}\label{eq:D2-final}
        \sum_{D\in \calD_2} \ell^{|\mathrm{Comp}(D)|} \leq \frac{1}{2}\cdot (1 + \ve)^n \cdot \prod_{v\in V(H)} (d^+_H(v) + C).
    \end{equation}
    
    By combining \eqref{eq:D1} and \eqref{eq:D2-final}, we obtain the desired inequality. This completes the proof.
\end{proof}

We now prove Lemma~\ref{lem:partition-cycle-sum}.

\begin{proof}[Proof of Lemma~\ref{lem:partition-cycle-sum}]
    We may assume that $C'$ is a positive integer. Let $\dir{G'}$ be a digraph obtained from $G$ by replacing each edge of $G$ by a digon and adding exactly $C'$ directed loops to every vertex. Denote $\calD$ the set of out-degree $1$-regular spanning subdigraphs of $\dir{G'}$. For a digraph $D$, we say a function $f: V(D)\to \mathbb{Z}_{\ell}$ as \emph{cyclic labelling} of $D$ if $f(v) = f(u) + 1$ whenever $u\neq v$ and $\ori{uv}\in E(D)$.

    \begin{claim}\label{clm:cyclic-labelling}
        Let $D$ be a digraph. Then the number of cyclic labellings of $D$ is at most $\ell^{|\mathrm{Comp}(D)|}$.
    \end{claim}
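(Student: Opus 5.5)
The plan is to show that a cyclic labelling is determined, on each connected component of the underlying graph of $D$, by its value at a single vertex. The bound then follows by multiplying over components.

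First, I fix a component $K \in \mathrm{Comp}(D)$ and a vertex $r \in V(K)$. Every vertex $v \in V(K)$ is joined to $r$ by a walk $r = w_0, w_1, \dots, w_k = v$ in the underlying graph of $D$. For each step $w_{j-1}w_j$, either $\ori{w_{j-1}w_j} \in E(D)$ or $\ori{w_j w_{j-1}} \in E(D)$. A loop never lies on such a walk, and it imposes no condition because the definition only constrains arcs with $u \neq v$. By the defining condition $f(v) = f(u) + 1$, each step therefore gives either $f(w_j) = f(w_{j-1}) + 1$ or $f(w_j) = f(w_{j-1}) - 1$ in $\mathbb{Z}_{\ell}$. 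Inducting along the walk, $f(v)$ is a function of $f(r)$ and the walk alone. Hence the restriction $f|_{V(K)}$ is determined by $f(r) \in \mathbb{Z}_\ell$, so there are at most $\ell$ possible restrictions to $K$.

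Second, the components partition $V(D)$. The map sending a cyclic labelling $f$ to the tuple of its values at the chosen representatives of the components is therefore injective. Its image has size at most $\ell^{|\mathrm{Comp}(D)|}$, which proves the claim.

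There is no real obstacle here. The only points needing care are that loops (and multi-loops) impose no constraint, since the definition only applies when $u \neq v$, and that consistency around cycles is not needed: we only need an upper bound, so I do not have to check that every choice of $f(r)$ actually extends to a cyclic labelling.
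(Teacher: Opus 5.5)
Your proposal is correct and matches the paper's proof: both fix a vertex in each component and propagate the label along an oriented path, with each arc contributing $\pm 1$ in $\mathbb{Z}_\ell$. So the labelling on a component is determined by one of $\ell$ values, and the bound follows by multiplying over components.
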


    \begin{claimproof}[Proof of Claim~\ref{clm:cyclic-labelling}]
        Let $D'$ be one of the components of $D$. It suffices to show that the number of cyclic labellings of $D'$ is at most $\ell$.
        Choose an arbitrary vertex $v\in V(D')$. Let $f'$ be a cyclic labelling of $D'$ with $f'(v) = i$ for some value $i\in \mathbb{Z}_{\ell}$. Since the underlying graph of $D'$ is connected, for all $u\in V(D') \setminus \{v\}$, there exists an oriented path $P$ from $v$ to $u$ in $D'$. Assume $P$ has $x$ and $y$ directed forward and directed backward to $u$, respectively. Then by the definition of cyclic labelling, we have $f'(u) \equiv i + x - y \pmod{\ell}$. Hence, $f'$ is uniquely determined from the value of $f'(v)$. As the number of possible choices for $f'(v)$ is $\ell$, the number of cyclic labellings of $D'$ is at most $\ell$. This completes the proof.
    \end{claimproof}

    We now show that a double-counting argument gives that the left-hand side of \eqref{eq:key-inequality} is at most the sum, over all $D \in \calD$, of the number of cyclic labellings of $D$. Together with Claim~\ref{clm:cyclic-labelling}, we claim the following.

    \begin{claim}\label{clm:double-counting-components}
        $
            \text{(LHS) of \eqref{eq:key-inequality}} \leq \sum_{D\in \calD} \ell^{|\mathrm{Comp}(D)|}
        $
    \end{claim}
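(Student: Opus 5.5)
We will expand the product on the left-hand side of \eqref{eq:key-inequality} into a sum over choices, and then re-index by the resulting digraph. Fix an $\ell$-partition $\calP=(V_0,\dots,V_{\ell-1})$. For $v\in V_i$, the quantity $d^+_G(v;(\calP,\ori{C_\ell}))+C'$ is exactly the number of out-arcs of $v$ in $\dir{G'}$ that are either one of the $C'$ loops at $v$ or an arc $\ori{vu}$ with $u\in V_{i+1}$. (Here the definition of $d^+$ is applied after relabelling $\mathbb{Z}_\ell$ so that $v$'s class plays the role of $0$.)

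Expanding the product over $v\in V(G)$ gives a sum over ways to choose one such out-arc for every vertex. Each choice is a spanning subdigraph $D\in\calD$ of $\dir{G'}$ with every out-degree equal to $1$. The condition on the choice says precisely that the class-index map $f_\calP\colon v\mapsto i$ (for $v\in V_i$) satisfies $f_\calP(u)=f_\calP(v)+1$ for every non-loop arc $\ori{vu}\in E(D)$. In other words, $f_\calP$ is a cyclic labelling of $D$. Thus
$$\prod_{v}\Big[d^+_G(v;(\calP,\ori{C_\ell}))+C'\Big]=\big|\{D\in\calD:\ f_\calP \text{ is a cyclic labelling of } D\}\big|.$$

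Summing over all $\ell$-partitions $\calP$ and exchanging the order of summation, the left-hand side becomes $\sum_{D\in\calD}|\{\calP: f_\calP\text{ cyclic labelling of }D\}|$. The map $\calP\mapsto f_\calP$ is a bijection between $\ell$-partitions (as ordered tuples indexed by $\mathbb{Z}_\ell$) and functions $V(G)\to\mathbb{Z}_\ell$. So the inner count is at most the number of cyclic labellings of $D$, which is at most $\ell^{|\mathrm{Comp}(D)|}$ by Claim~\ref{clm:cyclic-labelling}. This gives Claim~\ref{clm:double-counting-components}.

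The only subtle point is to be careful that loops impose no constraint, which matches the definition of cyclic labelling (it requires $u\neq v$). We should also note that distinct choices of arcs give distinct $D$, even among parallel loops, since loops are distinct arcs of the multi-loop digraph. Hence the expansion is an exact count rather than an overcount.
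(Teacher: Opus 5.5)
Your proposal is correct and is essentially the paper's argument: the paper likewise shows that, for a fixed $\ell$-partition $\calP$, the product equals the number of $D\in\calD$ compatible with $\calP$ (every non-loop arc goes from $V_i$ to $V_{i+1}$, i.e.\ the class-index map is a cyclic labelling of $D$). It then exchanges the order of summation and bounds the number of compatible partitions for each $D$ by the number of cyclic labellings, using Claim~\ref{clm:cyclic-labelling}.
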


    \begin{claimproof}[Proof of Claim~\ref{clm:double-counting-components}]
        Let $\calQ$ be the set of pairs $(D, \calP)$ such that $D \in \calD$ and $\calP$ is an $\ell$-partition of $V(G)$ where $\calP = (V_0, \dots, V_{\ell-1})$ that satisfies the following property. For every non-loop arc $\ori{uv}$, there exists $i\in \mathbb{Z}_{\ell}$ such that $u\in V_i$ and $v\in V_{i+1}$.
        For a fixed $\ell$-partition $\calP$, the value $\prod_{v\in V(G)} \bigg[ d^+_G\left(v; \left(\calP, \ori{C_{\ell}} \right) \right) + C' \bigg]$ is equal to the number of digraphs $D\in \calD$ such that $(D, \calP)\in \calQ$. Thus, the left-hand side of \eqref{eq:key-inequality} equals $|\calQ|$. By double counting, it suffices to show that for each $D\in \calD$, the number of $ \ell$-partitions $\calP$ such that $(D, \calP) \in \calQ$ is at most $\ell^{|\mathrm{Comp}(D)|}$ to conclude the proof. 
        For a fixed $D\in \calD$, an $\ell$-partition $\calP = (V_0, \dots, V_{\ell-1})$ such that $(D, \calP)\in \calQ$ corresponds to a unique cyclic labelling $f$ of $D$ by defining $f^{-1}(i) := V_i$ for each $i\in \mathbb{Z}_{\ell}$. Thus, the number of such $\calP$ is at most the number of cyclic labellings of $D$, which is at most $\ell^{|\mathrm{Comp}(D)|}$ given by Claim~\ref{clm:cyclic-labelling}. This completes the proof.
    \end{claimproof}

    Since $\dir{G}$ is a digraph with the number of loops for each vertex being at most $C'$, by Lemma~\ref{lem:exponential-count}, there exists $C > 0$ that only depends on $\ell, \ve, C'$ such that
    $$
        \sum_{D\in \calD} \ell^{|\mathrm{Comp}(D)|} \leq (1 + \ve)^n \prod_{v\in V(G)} (d_G(v) + C).
    $$ 
    Together with Claim~\ref{clm:double-counting-components}, we obtain the desired inequality. This completes the proof.
\end{proof}

We are now ready to prove Theorem~\ref{thm:cycle-factor}. A key ingredient in the proof is H\"{o}lder's inequality.

\begin{proof}[Proof of Theorem~\ref{thm:cycle-factor}]
    We fix parameters as
    $$
        0 < \frac{1}{C} \ll \frac{1}{C'} \ll \ve' \ll \ve, \frac{1}{\ell} < 1.
    $$
    
    Let $m\defeq n/\ell$.
    To count the number of embeddings of $C_{\ell}$-factors, namely $m \cdot C_{\ell}$, we fix an enumeration of cycles of length $\ell$ in the graph $m \cdot C_{\ell}$ as $C^{(1)}_{\ell}, \dots, C^{(m)}_{\ell}$. Assume that for each cycle of length $\ell$, the vertices are cyclically labelled with $\mathbb{Z}_{\ell}$ and we denote $c(v)$ as such label for each $v\in V(m \cdot C_{\ell})$. Then we define the labelling function $\iota: V(m \cdot C_{\ell}) \to \mathbb{Z}_{\ell} \times [m]$ such that $\iota(v) = (c(v), t)$ if $v\in C^{(t)}_{\ell}$. 

    As we aim to bound the number of embeddings of a $C_{\ell}$-factor, we first assign labels from $\mathbb{Z}_{\ell}$ to all the vertices of $G$. To this end, for an arbitrary $\ell$-equipartition $\calP = (V_0, \dots, V_{\ell-1})$ of $V(G)$, let $\mathrm{Emb}_{\calP}(m \cdot C_{\ell}; G)$ the set of embeddings $\phi \in \mathrm{Emb}(m \cdot C_{\ell}; G)$ such that $\phi(v) \in V_i$ if and only if $(\iota(v))_1 = i$ for each $v\in V(m \cdot C_{\ell})$ and $i\in \mathbb{Z}_{\ell}$.
    Then, we have
    \begin{equation*}
        \lvert \mathrm{Emb}(m \cdot C_{\ell}; G) \rvert = \sum_{\calP: \text{ $\ell$-equipartition}} \lvert \mathrm{Emb}_{\calP}(m \cdot C_{\ell}; G) \rvert.
    \end{equation*}

    Recall that $\ori{C_\ell}$ denotes the directed cycle on the vertex set $\mathbb{Z}_{\ell}$ with edges $i\to (i+1)$ for all $i\in \mathbb{Z}_{\ell}$. 

    \begin{claim}\label{clm:matching}
        For a given $\ell$-equipartition $\calP = (V_0, \dots, V_{\ell - 1})$ and each $i\in \mathbb{Z}_{\ell}$, it holds that
        \begin{equation}\label{eq:upperbound-from-dir-path}
            \lvert \mathrm{Emb}_{\calP}(m \cdot C_{\ell}; G) \rvert \leq m! \cdot \left( \frac{1 + \ve'}{e} \right)^{(\ell - 1)m} \cdot \prod_{v\in V(G)\setminus V_{i}} \bigg[ d_G^+\left(v; \left(\calP, \ori{C_{\ell}}\right)\right) + C' \bigg].
        \end{equation}
    \end{claim}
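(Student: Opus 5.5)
Fix the equipartition $\calP=(V_0,\dots,V_{\ell-1})$ and the index $i$; by cyclic symmetry of $\ori{C_\ell}$ we may assume $i=\ell-1$, so $V_i$ is the class whose outgoing cycle edges (to $V_0$) we ignore. An embedding $\phi\in\mathrm{Emb}_{\calP}(m\cdot C_\ell;G)$ is determined by two pieces of data. The first is the bijection $t\mapsto \phi(\iota^{-1}(0,t))$ from $[m]$ onto $V_0$. The second is, for each $j=0,\dots,\ell-2$, the map sending $\phi(\iota^{-1}(j,t))$ to $\phi(\iota^{-1}(j+1,t))$. That second map is a perfect matching in the bipartite graph $G[V_j,V_{j+1}]$. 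Dropping the requirement that the edges between $V_{\ell-1}$ and $V_0$ close the cycles only increases the count. Hence
\[
|\mathrm{Emb}_{\calP}(m\cdot C_\ell;G)|\le m!\cdot\prod_{j=0}^{\ell-2} N_{\mathrm{factor}}\bigl(K_2;G[V_j,V_{j+1}]\bigr).
\]
The first piece contributes the $m!$ factor; the product bounds the choices of the matchings independently.

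Next apply the Br\'egman--Minc inequality (Theorem~\ref{thm:bregman-minc}) to each $G[V_j,V_{j+1}]$, taking $L=V_j$. A vertex $v\in V_j$ has degree $|N_G(v)\cap V_{j+1}|=d^+_G(v;(\calP,\ori{C_\ell}))=:d_v$ there. This gives
\[
\prod_{j=0}^{\ell-2}\prod_{v\in V_j}(d_v!)^{1/d_v}.
\]
If some $d_v=0$, the corresponding matching count is $0$ and the inequality is trivial, so we may assume all $d_v\ge1$.

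Now use Proposition~\ref{prop:factorial-linear} with $\ve'$, choosing $C'\ge C(\ve')$. This gives $(d_v!)^{1/d_v}\le(1+\ve')\frac{d_v+C'}{e}$. The number of vertices outside $V_{\ell-1}$ is exactly $(\ell-1)m$, since $\calP$ is an equipartition. The constant factors therefore multiply to $\bigl(\frac{1+\ve'}{e}\bigr)^{(\ell-1)m}$, which yields \eqref{eq:upperbound-from-dir-path}.

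The only point needing care is the bookkeeping that the embedding is recovered injectively from the starting bijection together with the $\ell-1$ matchings. With the $\iota$-labelling this is immediate: following the matchings from $\phi(\iota^{-1}(0,t))$ determines $\phi$ on all of $C^{(t)}_\ell$. Otherwise the argument is routine.
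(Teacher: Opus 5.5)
Your proposal is correct and follows the paper's proof essentially step for step: you bound $|\mathrm{Emb}_{\calP}(m\cdot C_\ell;G)|$ by $m!$ times the product of the numbers of perfect matchings in $G[V_j,V_{j+1}]$ for $j\neq i$, then apply the Br\'egman--Minc inequality with $L=V_j$ and linearize via Proposition~\ref{prop:factorial-linear}. Your explicit injectivity argument via the $\iota$-labelling, and your treatment of the case $d_v=0$ (where $(0!)^{1/0}$ is undefined but the matching count vanishes), are small clarifications that the paper leaves implicit.
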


    \begin{claimproof}[Proof of Claim~\ref{clm:matching}]
        Fix an $\ell$-equipartition $\calP = (V_0, \dots, V_{\ell - 1})$ and $i\in \mathbb{Z}_{\ell}$. For each $j\in \mathbb{Z}_{\ell} \setminus \{i\}$, let $M_j$ be the set of perfect matchings in the bipartite graph $G[V_j, V_{j+1}]$, which is a bipartite subgraph induced by $V_j$ and $V_{j+1}$. 
        We observe that for each $j\in \mathbb{Z}_{\ell} \setminus \{i\}$, an arbitrary choice of $\psi_j\in M_j$, the graph $\bigcup_{j\in \mathbb{Z}_{\ell} \setminus \{i\}} \psi_j$ forms a $P_{\ell}$-factor of $G$, here $P_{\ell}$ is a path of length $\ell - 1$. Also, each image of $\phi \in \mathrm{Emb}_{\calP}(m\cdot C_{\ell}; G)$ projected between $V_j$ and $V_{j+1}$ is a perfect matching. Also, there is at most one way to extend such an $P_{\ell}$-factor to a $C_{\ell}$-factor, which is adding edges between the two endpoints of each path if it exists. By considering the orderings of each path in a $P_{\ell}$-factor, we need to multiply $m!$ since such a $P_{\ell}$-factor consists of $m$ paths. Thus, we have
        \begin{equation}\label{eq:prod-matching}
            \lvert \mathrm{Emb}_{\calP}(m \cdot C_{\ell}; G) \rvert \leq m! \cdot \prod_{j\in \mathbb{Z}_{\ell}\setminus \{i\}} |M_j|.
        \end{equation}

        Let $H_j \defeq G[V_j, V_{j+1}]$. Then Br\'{e}gman--Minc inequality (Theorem~\ref{thm:bregman-minc}) implies that
        \begin{equation*}
            |M_j| \leq \prod_{v\in V_j} \left( d_{H_j}(v)! \right)^{\frac{1}{d_{H_j}(v)}}.
        \end{equation*}
        Together with Proposition~\ref{prop:factorial-linear}, we have
        \begin{equation}\label{eq:linear-bregman-minc}
            |M_j| \leq \left( \frac{1 + \ve'}{e} \right)^m \cdot \prod_{v\in V_j} \left(d_{H_j}(v) + C'\right).
        \end{equation}
        Observe that for each $j\neq i$ and $v\in V_j$, the equality $d_{H_j}(v) =  d_G^+\left(v; \left(\calP, \ori{C_{\ell}}\right)\right)$ holds. Hence, from \eqref{eq:linear-bregman-minc}, we have
        \begin{equation*}
            |M_j| \leq \left( \frac{1 + \ve'}{e} \right)^m \cdot \prod_{v\in V_j} \bigg[ d_G^+\left(v; \left(\calP, \ori{C_{\ell}}\right)\right) + C' \bigg].
        \end{equation*}
        This, together with \eqref{eq:prod-matching}, yields the desired inequality. This completes the proof.
    \end{claimproof}

    By applying \eqref{eq:upperbound-from-dir-path} for all $i\in \mathbb{Z}_{\ell}$ and taking a geometric mean, we have
    \allowdisplaybreaks
    \begin{align*}
        \lvert \mathrm{Emb}_{\calP}(m \cdot C_{\ell}; G) \rvert &\leq m! \cdot \left( \frac{1 + \ve'}{e} \right)^{(\ell - 1)m} \cdot \bigg[ \prod_{i\in \mathbb{Z}_{\ell}}\prod_{v\in V(G)\setminus V_{i}} \bigg[ d_G^+\left(v; \left(\calP, \ori{C_{\ell}}\right)\right) + C' \bigg] \bigg]^{1/\ell}\\
        &= m! \cdot \left( \frac{1 + \ve'}{e} \right)^{(\ell - 1)m} \cdot  \prod_{v\in V(G)} \bigg[ d_G^+\left(v; \left(\calP, \ori{C_{\ell}}\right)\right) + C' \bigg]^{(\ell-1)/\ell}
    \end{align*}
    The last equality holds since for each $v\in V(G)$, the term $\big[ d_G^+\left(v; \left(\calP, \ori{C_{\ell}}\right)\right) + C' \big]$ appears exactly $(\ell - 1)$ times in the right-hand side of the first inequality. Then by H\"{o}lder's inequality, we have
    \allowdisplaybreaks
    \begin{align}
        \lvert \mathrm{Emb}(m \cdot C_{\ell}; G) \rvert &= \sum_{\calP: \text{ $\ell$-equipartition}} \lvert \mathrm{Emb}_{\calP}(m \cdot C_{\ell}; G) \rvert \nonumber \\
        &\leq  m! \cdot \left( \frac{1 + \ve'}{e} \right)^{(\ell - 1)m} \cdot \sum_{\calP: \text{ $\ell$-equipartition}} \prod_{v\in V(G)} \bigg[ d_G^+\left(v; \left(\calP, \ori{C_{\ell}}\right)\right) + C' \bigg]^{(\ell-1)/\ell} \nonumber \\
        &\leq m! \cdot \left( \frac{1 + \ve'}{e} \right)^{(\ell - 1)m} \cdot \left( \sum_{\calP: \text{ $\ell$-equipartition}} 1^{\ell} \right)^{1/\ell} \nonumber \\
        &\qquad \qquad \cdot \bigg[ \sum_{\calP: \text{ $\ell$-equipartition}} \prod_{v\in V(G)} \bigg[ d_G^+\left(v; \left(\calP, \ori{C_{\ell}}\right)\right) + C' \bigg]\bigg]^{(\ell-1)/\ell}. \label{eq:after-Holder} 
    \end{align}
    
    We note that the number of $\ell$-equipartitions of a set of size $n$ is at most $\ell^n$. Thus,
    \begin{equation}\label{eq:equi-partition-upper}
        \left( \sum_{\calP: \text{ $\ell$-equipartition}} 1^{\ell} \right)^{1/\ell} \leq \ell^{n/\ell}.
    \end{equation}
    Also, by Lemma~\ref{lem:partition-cycle-sum}, we have
    \begin{equation}\label{eq:key-2}
        \sum_{\calP: \text{ $\ell$-equipartition}} \prod_{v\in V(G)} \bigg[ d^+_G\left(v; \left(\calP, \ori{C_{\ell}} \right) \right) + C' \bigg]
        \leq (1 + \ve')^n \prod_{v\in V(G)} (d_G(v) + C).
    \end{equation}
    By combining \eqref{eq:after-Holder}, \eqref{eq:equi-partition-upper}, and \eqref{eq:key-2}, we obtain
    \allowdisplaybreaks
    \begin{align*}
        \lvert \mathrm{Emb}(m \cdot C_{\ell}; G) \rvert &\leq m! \cdot \left( \frac{1 + \ve'}{e} \right)^{(\ell - 1)m}  \cdot \ell^{n/\ell} \cdot (1 + \ve')^{(\ell - 1)m} \cdot \left(\prod_{v\in V(G)} (d_G(v) + C) \right)^{1 - \frac{1}{\ell}}\\
        &\leq \left( \frac{n}{\ell} \right)! \cdot \left( (1  + \ve) \ell \right)^{\frac{n}{\ell}} \cdot \left( \prod_{v\in V(G)} \frac{d_G(v) + C}{e} \right)^{1 - \frac{1}{\ell}}.
    \end{align*}
    This completes the proof.
\end{proof}

\subsection{General connected graphs}\label{subsec:general}

The purpose of this section is to establish a Kahn--Lov\'{a}sz-type theorem for general connected graphs $F$, stated below.

\begin{theorem}\label{thm:general-factors}
    Let $F$ be a connected graph with a spanning tree whose bipartition classes have sizes $a$ and $b$. Then for every $\ve > 0$, there exists a constant $C = C(a + b, \ve) > 0$ such that the following holds. For all $n$-vertex graph $G$, we have
    $$
        N_{\mathrm{factor}}(F; G) \leq \left( (1  + \ve) \frac{|V(F)|}{|\mathrm{Aut}(F)|} \right)^{\frac{n}{|V(F)|}} \cdot \left( a^{a}b^{b} \right)^{\frac{|V(F)|-1}{|V(F)|^2}n} \cdot \left( \prod_{v\in V(G)} \frac{d_G(v) + C}{e} \right)^{1 - \frac{1}{|V(F)|}}.
    $$
\end{theorem}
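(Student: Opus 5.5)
Let $T$ be the spanning tree of $F$, let $\ell\defeq a+b=|V(F)|$, and write $m\defeq n/\ell$. By Lemma~\ref{lem:aut-double-counting} applied to $m\cdot T\subseteq m\cdot F$ and \eqref{eq:embedding-automorphism}, we have
\[
N_{\mathrm{factor}}(F;G)\le \frac{|\mathrm{Emb}(m\cdot T;G)|}{m!\,|\Aut(F)|^{m}} .
\]
So it suffices to prove a tree analogue of Theorem~\ref{thm:cycle-factor}: the bound $|\mathrm{Emb}(m\cdot T;G)|\le m!\,((1+\ve)\ell)^{m}(a^ab^b)^{(\ell-1)n/\ell^2}\prod_v((d_G(v)+C)/e)^{1-1/\ell}$.

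\textbf{Fixed partition.} Label $V(T)$ by $\mathbb{Z}_\ell$ and sum over $\ell$-equipartitions $\calP=(V_0,\dots,V_{\ell-1})$, counting embeddings sending label $i$ into $V_i$.
\begin{itemize}
\item Fix a root $r\in V(T)$ and orient every edge toward $r$, giving an oriented tree $\ori{T_r}$ on $\mathbb{Z}_\ell$. Each non-root $j$ has a unique parent $p(j)$.
\item An embedding with the given labels is then determined, up to the $m!$ orderings of components, by a choice of perfect matching of $G[V_j,V_{p(j)}]$ for every non-root $j$.
\item Apply Theorem~\ref{thm:bregman-minc} from the side $V_j$ together with Proposition~\ref{prop:factorial-linear}. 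This bounds the count by $m!\,((1+\ve')/e)^{(\ell-1)m}\prod_{v\notin V_r}[d^+_G(v;(\calP,\ori{T_r}))+C']$.
\end{itemize}
Taking the geometric mean over all $\ell$ roots, each vertex $v\in V_j$ receives exponent $1/\ell$ on each of the $\ell-1$ factors $[d^+_G(v;(\calP,\ori{T_r}))+C']$, $r\ne j$. Each such factor is $|N_G(v)\cap V_{p_r(j)}|+C'$.

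\textbf{Summing over partitions.} Apply H\"older exactly as in the proof of Theorem~\ref{thm:cycle-factor}. The count of equipartitions contributes $\ell^{n/\ell}$, and it remains to bound
\[
\sum_{\calP}\prod_{v}\Bigl(\prod_{r\ne j(v)}\bigl(|N(v)\cap V_{p_r(j)}|+C'\bigr)\Bigr)^{1/(\ell-1)} .
\]
Use AM-GM on the inner geometric mean: it is at most $\frac{1}{\ell-1}\sum_{r\ne j}(|N(v)\cap V_{p_r(j)}|+C')$, which equals $\sum_{k}w_{jk}\,|N(v)\cap V_k|+C'$. Here $w_{jk}$ is the fraction of roots $r\ne j$ for which the parent of $j$ is $k$. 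Note $w_{jk}>0$ only if $jk\in E(T)$. Moreover, the parent of $j$ toward $r$ is the neighbour of $j$ on the path to $r$, so $w_{jk}=s_{jk}/(\ell-1)$, where $s_{jk}$ is the size of the component of $T-jk$ containing $k$.

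\textbf{Weighted double counting.} Now mimic Lemma~\ref{lem:partition-cycle-sum}. Expand the product of linear forms as a sum over choices, for each $v$, of either a loop ($C'$ choices) or an arc $v\to u$ with weight $w_{j(v)k}$ subject to $u\in V_k$. For a fixed out-degree-one digraph $D$ in $\dir{G'}$, the compatible labellings are maps $f:V\to\mathbb{Z}_\ell$ with $f(v)f(u)\in E(T)$ along each non-loop arc. The weight is $\prod_{\text{arcs}}w_{f(v)f(u)}$.

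Fixing the label at the cycle vertex of a component and propagating backward along in-trees, each vertex $x$ contributes a factor $\sum_k w_{kj}$ when its out-neighbour has label $j$. Since a tree is bipartite, this sum is bounded by a quantity depending only on $j$. Bound the weighted number of labellings per component by $\ell\cdot\prod_x(\text{max row sums})$ and convert this into a per-vertex factor. Then Lemma~\ref{lem:exponential-count} absorbs the $\ell^{|\mathrm{Comp}(D)|}$ into $(1+\ve')^n$.

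\textbf{Main obstacle.} The real difficulty is that the per-vertex factors must produce exactly $(a^ab^b)^{1/\ell}$ per $\ell$ vertices, after raising to the power $(\ell-1)/\ell$. The crude row-sum bound will not give the sharp $a^ab^b$. My first attempt would be to change the linear form before expanding. For $v$ in a class of colour size $a$, bound $\sum_k w_{jk}|N(v)\cap V_k|\le |N(v)\cap U_{\bar c}|$ up to rescaling, where $U_{\bar c}$ is the union of the opposite-colour classes. Then count partitions by colour: $v$ picks an out-neighbour in the opposite colour class. Assigning one of $a$ (or $b$) specific labels within a colour class gives the factors $a$ and $b$ per vertex. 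This yields roughly $a^{a}b^{b}$ per $\ell$ vertices, with components forcing only the $2$-colouring, which costs $2^{|\mathrm{Comp}|}$. The exponent bookkeeping, which reconciles the $1/\ell$ from H\"older with this loss, is where I expect the argument to need care.
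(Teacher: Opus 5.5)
Your final route is exactly the paper's proof via Theorem~\ref{thm:tree-factor} and Lemma~\ref{lem:partition-tree-sum}: you dominate each root-dependent out-degree by the number of neighbours of $v$ in the opposite colour class, count the at most $a^{an/\ell}b^{bn/\ell}$ equipartitions refining a given colour $2$-partition, and handle the sum over $2$-partitions by the $\ell=2$ case of Lemma~\ref{lem:partition-cycle-sum} (the $2^{|\mathrm{Comp}(D)|}$ weight), so the weighted $w_{jk}$ double counting is an unnecessary detour. The bookkeeping you flagged is immediate: no rescaling is needed because the weights satisfy $w_{jk}\le 1$ and vanish off $N_T(j)$ (the paper even skips AM--GM, using $d^+_G(v;(\calP,\ori{T_i}))\le d^+_G(v;(\calP,\dir{T}))$ before averaging over roots), and raising $(a^ab^b)^{n/\ell}$ to the H\"older power $(\ell-1)/\ell$ gives exactly $(a^ab^b)^{(\ell-1)n/\ell^2}$.
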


Since every connected graph contains a spanning tree, the following theorem implies Theorem~\ref{thm:main-factors}.

\begin{theorem}\label{thm:tree-factor}
    For every tree $T$ whose bipartition classes have sizes $a$ and $b$, and every real number $\ve>0$, there exists a constant $C = C(a + b, \ve) > 0$ such that 
    \begin{equation*}
        \bigg|\mathrm{Emb}\left(\frac{n}{a + b}\cdot T; G\right)\bigg| \leq \left( \frac{n}{a+b} \right)! \cdot ((1 + \ve)(a+b))^{\frac{n}{a+b}} \cdot \left( a^{a}b^{b} \right)^{\frac{(a+b-1)}{(a+b)^2}n} \cdot \left( \prod_{v\in V(G)} \frac{d_G(v) + C}{e} \right)^{1 - \frac{1}{a+b}}
    \end{equation*}
    holds for all $n$-vertex graph $G$ where $n$ is divisible by $a + b$.
\end{theorem}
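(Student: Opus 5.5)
The plan is to run the proof of Theorem~\ref{thm:cycle-factor} again, with the oriented cycle replaced by the tree $T$ oriented towards a varying root. The factor $a^ab^b$ will come from counting equipartitions compatible with a fixed out-degree-one digraph. Let $\ell\defeq a+b$, $m\defeq n/\ell$, and identify $V(T)$ with an $\ell$-element label set. For an equipartition $\calP=(V_x)_{x\in V(T)}$ with $|V_x|=m$, let $\mathrm{Emb}_{\calP}$ be the set of embeddings sending every copy of $x$ into $V_x$. Then $|\mathrm{Emb}(m\cdot T;G)|=\sum_{\calP}|\mathrm{Emb}_{\calP}|$.

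\textbf{Step 1 (Br\'egman--Minc for each root).} Fix a root $r\in V(T)$, and for $x\neq r$ let $p_r(x)$ be the parent of $x$. An embedding in $\mathrm{Emb}_{\calP}$ restricts to a perfect matching of $G[V_x,V_{p_r(x)}]$ for every $x\neq r$. Conversely, since $T$ is a tree, a choice of such matchings determines the $T$-factor: start from each vertex of $V_r$ and follow the matchings. Ordering the $m$ copies contributes a factor $m!$. Applying Theorem~\ref{thm:bregman-minc} from the $V_x$ side and then Proposition~\ref{prop:factorial-linear} gives
$$|\mathrm{Emb}_{\calP}|\le m!\Big(\tfrac{1+\ve'}{e}\Big)^{(\ell-1)m}\prod_{x\neq r}\prod_{v\in V_x}\big(d_G(v,V_{p_r(x)})+C'\big).$$

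\textbf{Step 2 (geometric mean over roots).} For an edge $xy$ of $T$, let $s_{xy}$ be the order of the component of $T-xy$ that contains $y$. Then $p_r(x)=y$ holds for exactly $s_{xy}$ roots $r$, and $\sum_{y\sim x}s_{xy}=\ell-1$. Taking the geometric mean over the $\ell$ roots gives, for each $v\in V_x$, the factor $\prod_{y\sim x}(d_G(v,V_y)+C')^{s_{xy}/\ell}$. By weighted AM--GM this factor is at most $\big(d^+_G(v;(\calP,\dir{T}))+\ell C'\big)^{(\ell-1)/\ell}$, where $\dir{T}$ is $T$ with both orientations of every edge. Summing over equipartitions and applying H\"older exactly as in \eqref{eq:after-Holder}, it then suffices to prove the analogue of Lemma~\ref{lem:partition-cycle-sum}:
$$\sum_{\calP\ \text{equipartition}}\prod_{v}\big(d^+_G(v;(\calP,\dir{T}))+C''\big)\le (1+\ve')^n\,(a^ab^b)^{n/\ell}\prod_v(d_G(v)+C).$$
Indeed, raising this to the power $(\ell-1)/\ell$ and multiplying by the $\ell^{n/\ell}$ coming from the count of equipartitions produces exactly the claimed bound.

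\textbf{Step 3 (the key sum).} As in Claim~\ref{clm:double-counting-components}, I will double count pairs $(D,\calP)$. Here $D$ ranges over out-degree-one spanning subdigraphs of $\dir{G'}$ (each edge of $G$ doubled into a digon, plus $C''$ loops at every vertex), and $\calP$ is an equipartition whose labelling $f$ maps every non-loop arc onto an edge of $T$; that is, $f$ is a homomorphism from $D$ to $T$ off the loops. The main obstacle is that, unlike the cycle case, $f$ is \emph{not} determined up to $\ell$ choices per component, since each step may go to any tree neighbour. I would handle this by coarsening. Since $T$ is bipartite with classes $A$ and $B$, composing $f$ with the projection to $\{A,B\}$ gives a proper $2$-labelling of each component, so there are at most $2^{|\mathrm{Comp}(D)|}$ choices for it. Once this side assignment is fixed, exactly $am$ vertices of $G$ go to $A$ and must be split into $a$ classes of size $m$. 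This can be done in at most $(am)!/(m!)^a\le a^{am}$ ways, and similarly in at most $b^{bm}$ ways on the $B$ side. The product is $a^{am}b^{bm}=(a^ab^b)^{n/\ell}$, which uses the equipartition constraint crucially. Hence the left-hand side is at most $(a^ab^b)^{n/\ell}\sum_{D}2^{|\mathrm{Comp}(D)|}$. Lemma~\ref{lem:exponential-count} with $\ell=2$ then bounds this last sum by $(1+\ve')^n\prod_v(d_G(v)+C)$, which finishes the argument after choosing $1/C\ll 1/C''\ll\ve'\ll\ve,1/\ell$.
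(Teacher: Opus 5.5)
Your proposal is correct and follows essentially the same route as the paper: Br\'egman--Minc for each choice of root of $T$, a geometric mean over the roots, H\"older over equipartitions, and a key sum handled by passing to the bipartition of $T$ (at most $a^{am}b^{bm}$ refinements of a given side assignment) and then applying Lemma~\ref{lem:exponential-count} with $\ell=2$. The differences are cosmetic. The paper bounds each $d^+_G(v;(\calP,\ori{T_i}))$ directly by $d^+_G(v;(\calP,\dir{T}))$ rather than using weighted AM--GM. It also packages your coarsened double count as the termwise domination $d^+_G(v;(\calP,\dir{T}))\le d^+_G(v;(B(\calP),\ori{C_2}))$, followed by Lemma~\ref{lem:partition-cycle-sum} with $\ell=2$.
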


Theorem~\ref{thm:general-factors} follows from Theorem~\ref{thm:tree-factor} and Lemma~\ref{lem:aut-double-counting} exactly as Theorem~\ref{thm:main-factors} follows from Theorem~\ref{thm:cycle-factor}; we omit the details. The proof of Theorem~\ref{thm:tree-factor} follows the same strategy as that of Theorem~\ref{thm:cycle-factor}, with suitable modifications. We start with the following lemma.

\begin{lemma}\label{lem:partition-tree-sum}
    Let $n, a, b \geq 1$ be integers where $n$ is divisible by $a + b$. Let $T$ be a labelled tree on the vertex set $\mathbb{Z}_{(a+b)}$, whose unique bipartition has size $a$ and $b$. Denote $\dir{T}$ by the digraph obtained from $T$ by replacing every edge of $T$ with a digon.
    Then for all real number $\ve > 0$ and $C' > 0$, there exists $C > 0$ only depending on $\ve, a+b$, and $C'$ such that for all $n$-vertex graph $G$, it holds that
    \begin{equation}\label{eq:key-inequality-tree}
        \sum_{\calP: \text{ $(a+b)$-equipartition}} \prod_{v\in V(G)} \bigg[ d^+_G\left(v; \left(\calP, \dir{T} \right) \right) + C' \bigg]
        \leq \left((1 + \ve) a^{\frac{a}{a+b}} b^{\frac{b}{a+b}}\right)^n \cdot \prod_{v\in V(G)} (d_G(v) + C).
    \end{equation}
\end{lemma}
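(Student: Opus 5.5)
The plan is to follow the proof of Lemma~\ref{lem:partition-cycle-sum}: double count, then apply Lemma~\ref{lem:exponential-count}. The one new ingredient is that the equipartition condition is used to produce the factor $a^{a/(a+b)}b^{b/(a+b)}$. As before, we may assume $C'$ is a positive integer.

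\textbf{Double-counting setup.} Let $\dir{G'}$ be obtained from $G$ by replacing every edge with a digon and adding exactly $C'$ directed loops at every vertex. Let $\calD$ be the set of spanning subdigraphs of $\dir{G'}$ in which every vertex has out-degree $1$. Write $\calP=(V_0,\dots,V_{a+b-1})$ and $f_\calP$ for the induced label map $V(G)\to\mathbb{Z}_{a+b}$.
\begin{itemize}
\item[$\bullet$] For a fixed $(a+b)$-equipartition $\calP$, the product $\prod_{v}\big[d^+_G(v;(\calP,\dir{T}))+C'\big]$ counts exactly those $D\in\calD$ in which every non-loop arc $\ori{uv}$ satisfies $f_\calP(u)f_\calP(v)\in E(T)$. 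Indeed, the out-arc of $v$ is either one of the $C'$ loops or an edge to a vertex in a class $V_j$ with $j$ adjacent to $f_\calP(v)$ in $T$.
\item[$\bullet$] Hence the left-hand side of \eqref{eq:key-inequality-tree} equals the number of compatible pairs $(D,\calP)$.
\item[$\bullet$] It therefore suffices to show that every $D\in\calD$ is compatible with at most $2^{|\mathrm{Comp}(D)|}\,a^{\frac{a}{a+b}n}\,b^{\frac{b}{a+b}n}$ equipartitions.
\end{itemize}
Then Lemma~\ref{lem:exponential-count}, applied with $\ell=2$ and $H=\dir{G'}$ (which satisfies $d^+_H(v)=d_G(v)+C'$), gives $\sum_{D\in\calD}2^{|\mathrm{Comp}(D)|}\le(1+\ve)^n\prod_v(d_G(v)+C)$ after enlarging $C$ by $C'$. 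This finishes the proof.

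\textbf{Counting compatible equipartitions for a fixed $D$.} Let $A,B\subseteq\mathbb{Z}_{a+b}$ be the bipartition classes of $T$, with $|A|=a$ and $|B|=b$.
\begin{itemize}
\item[$\bullet$] If $\calP$ is compatible with $D$, then every non-loop arc of $D$ joins a vertex with label in $A$ to a vertex with label in $B$. Loops impose no constraint.
\item[$\bullet$] Within each connected component of $D$, the side ($A$ or $B$) of every vertex is therefore determined by the side of any single vertex. This is the argument of Claim~\ref{clm:cyclic-labelling} with parities in place of labels.
\item[$\bullet$] So there are at most $2^{|\mathrm{Comp}(D)|}$ choices for the set $S\subseteq V(G)$ of vertices receiving labels in $A$.
\item[$\bullet$] Since $\calP$ is an equipartition, $|S|=\frac{an}{a+b}$ is forced. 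Once $S$ is fixed, the labels of the vertices of $S$ can be chosen in at most $a^{|S|}=a^{\frac{a}{a+b}n}$ ways, and those of $V(G)\setminus S$ in at most $b^{\frac{b}{a+b}n}$ ways.
\item[$\bullet$] Here we deliberately forget all finer constraints: that labels must actually form a homomorphism to $T$, and that each class has size exactly $\frac{n}{a+b}$. These only decrease the count.
\end{itemize}
Multiplying gives the claimed bound.

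\textbf{Main obstacle.} The one subtle point is where the factor $a^{a/(a+b)}b^{b/(a+b)}$ per vertex comes from. A naive count of homomorphisms of a component into $T$ is governed by the degrees of $T$, not by $a$ and $b$. The fix is to use the equipartition to pin down $|S|$ exactly rather than summing over all possible sizes of $S$; otherwise we would lose a factor like $(a+b)^n$. The remaining loss, $2^{|\mathrm{Comp}(D)|}$, is exactly of the type that Lemma~\ref{lem:exponential-count} absorbs into the $(1+\ve)^n$ factor.
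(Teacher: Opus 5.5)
Your proof is correct and is essentially the paper's argument. The paper bounds $d^+_G(v;(\calP,\dir{T}))\le d^+_G(v;(B(\calP),\ori{C_2}))$, where $B(\calP)$ merges the classes into the two sides of $T$, shows there are at most $(a^{a/(a+b)}b^{b/(a+b)})^n$ equipartitions per $2$-partition, and then cites Lemma~\ref{lem:partition-cycle-sum} with $\ell=2$; you instead unfold that lemma's double count directly and apply Lemma~\ref{lem:exponential-count} with $\ell=2$, and you also handle the shift from $d_G(v)+C'$ to $d_G(v)+C$ explicitly.
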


\begin{proof}[Proof of Lemma~\ref{lem:partition-tree-sum}]
    Since $T$ is a bipartite graph, we may assume that its bipartition is $(\{0, \dots, a-1\}, \{a, \dots, a+b-1 \})$. For a given $(a+b)$-equipartition $\calP = (V_0, \dots, V_{a+b-1})$, We denote by $B(\calP)$ the $2$-partition $(U_0, U_1)$ such that $U_0 = \bigcup_{0\leq i \leq a-1} V_i$ and $U_1 = V(G)\setminus U_0$.
    We observe that by its definition, we have
    \begin{equation}\label{eq:P-BP-outdegree}
        d^+_G\left(v; \left(\calP, \dir{T} \right) \right) \leq d^+_G\left(v; \left(B(\calP), \ori{C_2} \right) \right)
    \end{equation}
    for all $(a+b)$-partitions $\calP$ and vertices $v\in V(G)$.

    \begin{claim}\label{clm:number-of-B(P)}
        For each $2$-partition $\calQ$, the number of $t$-equipartitions $\calP$ with $\calQ = B(\calP)$ is at most
        $$
            \left( a^{\frac{a}{a+b}} b^{\frac{b}{a+b}}\right)^n.
        $$
    \end{claim}

    \begin{claimproof}[Proof of Claim~\ref{clm:number-of-B(P)}]
        Let $\calQ = (U_0, U_1)$. If $|U_0| \neq an/(a+b)$, then there is no $(a+b)$-equipartition $\calP$ with $\calQ = B(\calP)$. Thus, we may assume that $|U_0| = an/(a+b)$ and $|U_1| = bn/(a+b)$. Then the number of such $(a+b)$-equipartitions is bounded by the product of the number of $a$-equipartitions of $U_0$ and $b$-equipartitions of $U_1$, which is at most
        $$
            \left( a^{\frac{a}{a+b}} b^{\frac{b}{a+b}}\right)^n.
        $$
        This completes the proof.
    \end{claimproof}

    By \eqref{eq:P-BP-outdegree} and Claim~\ref{clm:number-of-B(P)}, we have
    \allowdisplaybreaks
    \begin{align*}
        \text{(LHS) of \eqref{eq:key-inequality-tree}} &\leq \left( a^{\frac{a}{a+b}} b^{\frac{b}{a+b}}\right)^n \cdot \sum_{\calQ: \text{ $2$-partition}} \prod_{v\in V(G)} \bigg[ d^+_G\left(v; \left(\calQ, \ori{C_2} \right) \right) + C' \bigg]\\
        &\leq \left((1 + \ve) a^{\frac{a}{a+b}} b^{\frac{b}{a+b}}\right)^n \cdot \prod_{v\in V(G)} (d_G(v) + C),
    \end{align*}
    where the last inequality holds by Lemma~\ref{lem:partition-cycle-sum}. This completes the proof.
\end{proof}

We now prove Theorem~\ref{thm:tree-factor}.

\begin{proof}[Proof of Theorem~\ref{thm:tree-factor}]
    We fix parameters as
    $$
        0 < \frac{1}{C} \ll \frac{1}{C'} \ll \ve' \ll \ve, \frac{1}{a+b} < 1.
    $$
    
    Let $m\defeq n/(a + b)$.
    Similarly to the proof of Theorem~\ref{thm:cycle-factor}, we fix an enumeration of trees $T$ in the graph $m\cdot T$ as $T^{(1)}, \dots, T^{(m)}$. We now regard $T$ as a labelled tree on the vertex set $\mathbb{Z}_{t}$ and denote $c(v)$ as such a label for each $v\in V(m\cdot T)$.
    Define $\iota: V(m\cdot T)\to \mathbb{Z}_{(a+b)} \times [m]$ as $\iota(v) = (c(v), s)$ if $v\in T^{(s)}$.

    Similarly to the proof of Theorem~\ref{thm:cycle-factor}, for a given $(a+b)$-equipartition $\calP = (V_0, \dots, V_{a+b-1})$, we define $\mathrm{Emb}_{\calP}(m\cdot T; G)$ by the set of embeddings $\phi \in \mathrm{Emb}(m\cdot T; G)$ such that $\phi(v)\in V_i$ if and only if $(\iota(v))_1 = i$ for each $v\in V(m\cdot T)$ and $i\in \mathbb{Z}_{(a+b)}$. Then we have
    \begin{equation*}
        \lvert \mathrm{Emb}(m \cdot T; G) \rvert = \sum_{\calP: \text{ $(a+b)$-equipartition}} \lvert \mathrm{Emb}_{\calP}(m \cdot T; G) \rvert.
    \end{equation*}

    We denote by $\dir{T}$ the directed graph obtained from $T$ by replacing every edge with a digon.
    For each $i\in \mathbb{Z}_{(a + b)}$, we write $\ori{T_i}$ as an oriented graph whose underlying graph is $T$ in which every edge is directed towards $i$ along the unique path to $i$. We note that all the vertices of $\ori{T_i}$ except $i$ have out-degree one and $i$ has out-degree zero.

    \begin{claim}\label{clm:matching-tree}
        For a given $(a+b)$-equipartition $\calP = (V_0, \dots, V_{a+b-1})$ and each $i\in \mathbb{Z}_{(a+b)}$, it holds that
        \begin{equation}\label{eq:upperbound-from-rooted-tree}
            \lvert \mathrm{Emb}_{\calP}(m \cdot T; G) \rvert \leq m! \cdot \left( \frac{1 + \ve'}{e} \right)^{(a + b - 1)m} \cdot \prod_{v\in V(G)\setminus V_{i}} \bigg[ d_G^+\left(v; \left(\calP, \ori{T_i}\right)\right) + C' \bigg].
        \end{equation}
    \end{claim}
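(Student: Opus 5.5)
The plan follows the proof of Claim~\ref{clm:matching} almost verbatim, with the directed cycle $\ori{C_\ell}$ replaced by the rooted orientation $\ori{T_i}$. Fix the equipartition $\calP$ and the root $i$.

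\textbf{Step 1 (decomposition into matchings).} For every arc $\ori{jk}$ of $\ori{T_i}$, where $k$ is the parent of $j$, let $M_j$ be the set of perfect matchings of the bipartite graph $H_j \defeq G[V_j, V_k]$. Each such pair $(V_j, V_k)$ has sizes $m$ and $m$, so $M_j$ is well defined. There is one such $j$ for every $j \neq i$, since each non-root vertex of $\ori{T_i}$ has out-degree exactly one.

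\textbf{Step 2 (injectivity up to ordering).} Take $\phi \in \mathrm{Emb}_{\calP}(m\cdot T; G)$. For each $j \neq i$, the images of the edges of the copies $T^{(s)}$ between label classes $j$ and its parent form a perfect matching in $M_j$. Conversely, a tuple $(\psi_j)_{j\neq i}$ determines the image subgraph completely. That subgraph is a $T$-factor in which every vertex of $V_i$ lies in exactly one component. This holds because every vertex in $V_j$ follows its matching edges up to the root class, and the union of matchings along a tree has exactly $m$ components, each containing one vertex of $V_i$. The embedding $\phi$ is then recovered once we decide which copy $T^{(s)}$ is sent to which component. Within a component, the labels are forced by the classes, since $\phi(v)\in V_{c(v)}$. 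This gives
$$\lvert \mathrm{Emb}_{\calP}(m\cdot T; G)\rvert \leq m!\prod_{j\neq i}|M_j|.$$
The one point to check here is that the matchings along the tree edges do assemble into components isomorphic to $T$ with the correct labels. I expect this to be the main (though mild) obstacle. It follows by induction from the root outward: each vertex of $V_j$ has a unique partner in its parent class under $\psi_j$, and the edge sets for different tree edges are disjoint.

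\textbf{Step 3 (Brégman--Minc and linearization).} Apply Theorem~\ref{thm:bregman-minc} to each $H_j$ with $L = V_j$, and then Proposition~\ref{prop:factorial-linear} with $\ve'$ and $C'$ chosen as in the parameter hierarchy. This gives
$$|M_j| \leq \left(\frac{1+\ve'}{e}\right)^m \prod_{v\in V_j}\bigl(d_{H_j}(v)+C'\bigr).$$
For vertices with $d_{H_j}(v)=0$ the bound is trivially fine, since then $M_j=\emptyset$.

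For $v \in V_j$ with $j\neq i$, the set $N^+_{\ori{T_i}}(j)$ is exactly $\{k\}$. Hence $d_{H_j}(v) = d^+_G(v;(\calP,\ori{T_i}))$. Multiplying over the $a+b-1$ classes $j\neq i$ and inserting the result into the bound of Step 2 gives \eqref{eq:upperbound-from-rooted-tree}.
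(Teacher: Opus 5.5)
Your proposal is correct and follows the paper's proof essentially step for step. Both arguments decompose an embedding in $\mathrm{Emb}_{\calP}(m\cdot T;G)$ into perfect matchings of $G[V_j,V_{j'}]$ along the arcs of $\ori{T_i}$, pay a factor $m!$ for assigning copies to components, and then apply Br\'{e}gman--Minc with Proposition~\ref{prop:factorial-linear}, identifying $d_{H_j}(v)$ with $d^+_G(v;(\calP,\ori{T_i}))$. Your extra checks are correct: the union of the matchings always assembles into a correctly labelled $T$-factor, and a degree-zero vertex forces $M_j=\emptyset$. Note, though, that the upper bound only needs the injectivity direction (an embedding is recovered from its matchings together with the copy-to-component assignment), so the step you flag as the main obstacle is not actually required.
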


    \begin{claimproof}[Proof of Claim~\ref{clm:matching-tree}]
        For each $j\in \mathbb{Z}_{(a+b)} \setminus \{i\}$, denote $j'$ by the unique out-neighbor of $j$. Let $H_j$ be the bipartite graph $G[V_j, V_{j'}]$ and let $M_j$ be the set of perfect matchings of $H_j$. We note that for each $\phi \in \mathrm{Emb}_{\calP}(m\cdot T; G)$ and $j\in \mathbb{Z}_{(a+b)}\setminus \{i\}$, the projection of $\phi$ on to $H_j$ induces a perfect matching. Also, the union of perfect matchings chosen from $M_j$ forms an image $m\cdot T$ for some $\phi \in \mathrm{Emb}_{\calP}(m \cdot T; G)$. Thus, we have
        \begin{equation}\label{eq:prod-matching-tree}
            \lvert \mathrm{Emb}_{\calP}(m \cdot T; G) \rvert \leq m! \cdot \prod_{j\in \mathbb{Z}_{(a + b)}\setminus \{i\}} |M_j|.
        \end{equation}

        Similarly to the proof of Claim~\ref{clm:matching}, Br\'{e}gman--Minc inequality (Theorem~\ref{thm:bregman-minc}) and Proposition~\ref{prop:factorial-linear} implies that
        \begin{equation}\label{eq:linear-bregman-minc-tree}
            |M_j| \leq \left( \frac{1 + \ve'}{e} \right)^m \cdot \prod_{v\in V_j} \left(d_{H_j}(v) + C'\right).
        \end{equation}
        Since $d_{H_j}(v) = d_G^+\left(v; \left(\calP, \ori{T_i}\right)\right)$ for each $j\in \mathbb{Z}_{(a+b)}\setminus \{i\}$ and $v\in V_j$, the inequalities \eqref{eq:prod-matching-tree} and \eqref{eq:linear-bregman-minc-tree} implies that
        \allowdisplaybreaks
        \begin{align*}
            \lvert \mathrm{Emb}_{\calP}(m \cdot T; G) \rvert &\leq m! \cdot \prod_{j\in \mathbb{Z}_{(a + b)}\setminus \{i\}} |M_j| \\
            &\leq m! \cdot \left( \frac{1 + \ve'}{e} \right)^{(a + b - 1)m} \cdot \prod_{v\in V(G)\setminus V_{i}} \bigg[ d_G^+\left(v; \left(\calP, \ori{T_i}\right)\right) + C' \bigg].
        \end{align*}
        This completes the proof.
    \end{claimproof}

    Observe that for a given $(a+b)$-equipartition $\calP = (V_0, \dots, V_{a+b-1})$ and $i\in \mathbb{Z}_{(a+b)}$, it holds that 
    $$
        d_G^+\left(v; \left(\calP, \ori{T_i}\right)\right) \leq d_G^+\left(v; \left(\calP, \dir{T}\right)\right)
    $$ for all $v\in V(G)\setminus V_i$.
    Thus, \eqref{eq:upperbound-from-rooted-tree} implies that
    \begin{equation}\label{eq:upperbound-from-tree}
        \lvert \mathrm{Emb}_{\calP}(m \cdot T; G) \rvert \leq m! \cdot \left( \frac{1 + \ve'}{e} \right)^{(a + b - 1)m} \cdot \prod_{v\in V(G)\setminus V_{i}} \bigg[ d_G^+\left(v; \left(\calP, \dir{T}\right)\right) + C' \bigg].
    \end{equation}
    By applying \eqref{eq:upperbound-from-tree} for all $i\in \mathbb{Z}_{(a+b)}$ and taking a geometric mean, we have
    \allowdisplaybreaks
    \begin{align*}
        \lvert \mathrm{Emb}_{\calP}(m \cdot T; G) \rvert &\leq m! \cdot \left( \frac{1 + \ve'}{e} \right)^{(a + b - 1)m} \cdot \bigg[ \prod_{i\in \mathbb{Z}_{(a+b)}}\prod_{v\in V(G)\setminus V_{i}} \bigg[ d_G^+\left(v; \left(\calP, \dir{T}\right)\right) + C' \bigg] \bigg]^{1/(a+b)}\\
        &= m! \cdot \left( \frac{1 + \ve'}{e} \right)^{(a + b - 1)m} \cdot  \prod_{v\in V(G)} \bigg[ d_G^+\left(v; \left(\calP, \dir{T}\right)\right) + C' \bigg]^{(a + b - 1)/(a + b)}.
    \end{align*}
    The last equality holds since for each $v\in V(G)$, the term $\big[ d_G^+\left(v; \left(\calP, \dir{T}\right)\right) + C' \big]$ appears exactly $(a + b - 1)$ times in the right-hand side of the first inequality.
    We now apply H\"{o}lder's inequality similarly to the proof of Theorem~\ref{thm:cycle-factor}. Together with Lemma~\ref{lem:partition-tree-sum}, we have
    \allowdisplaybreaks
    \begin{align}
        \lvert \mathrm{Emb}(m \cdot T; G) \rvert &= \sum_{\calP: \text{ $(a + b)$-equipartition}} \lvert \mathrm{Emb}_{\calP}(m \cdot T; G) \rvert \nonumber \\
        &\leq  m! \cdot \left( \frac{1 + \ve'}{e} \right)^{(a + b - 1)m} \cdot \sum_{\calP: \text{ $(a + b)$-equipartition}} \prod_{v\in V(G)} \bigg[ d_G^+\left(v; \left(\calP, \dir{T}\right)\right) + C' \bigg]^{(a + b - 1)/(a + b)} \nonumber \\
        &\leq m! \cdot \left( \frac{1 + \ve'}{e} \right)^{(a + b - 1)m} \cdot \left( \sum_{\calP: \text{ $(a + b)$-equipartition}} 1^{(a+b)} \right)^{1/(a+b)} \nonumber \\
        &\qquad \cdot \bigg[ \sum_{\calP: \text{ $(a+b)$-equipartition}} \prod_{v\in V(G)} \bigg[ d_G^+\left(v; \left(\calP, \dir{T}\right)\right) + C' \bigg]\bigg]^{(a + b-1)/(a + b)} \label{eq:use-holder-tree} \\
        &\leq m! \cdot \left( \frac{1 + \ve'}{e} \right)^{(a + b - 1)m} \cdot (a+b)^m \nonumber \\
        &\qquad \qquad \cdot \left( \left((1 + \ve') a^{\frac{a}{a+b}} b^{\frac{b}{a+b}}\right)^n \cdot \prod_{v\in V(G)} (d_G(v) + C) \right)^{(a+b-1)/(a+b)} \label{eq:use-partition-tree-sum} \\
        &\leq \left( \frac{n}{a+b} \right)! \cdot ((1 + \ve)(a+b))^{\frac{n}{a+b}} \cdot \left( a^{a}b^{b} \right)^{\frac{(a+b-1)}{(a+b)^2}n} \cdot \left( \prod_{v\in V(G)} \frac{d_G(v) + C}{e} \right)^{1 - \frac{1}{a+b}}. \nonumber
    \end{align}
    The inequality \eqref{eq:use-holder-tree} follows from H\"older's inequality, while \eqref{eq:use-partition-tree-sum} follows from Lemma~\ref{lem:partition-tree-sum}. This completes the proof.
    
\end{proof}


\section{Multigraph Kahn--Lov\'{a}sz theorem}\label{sec:multigraph}

In this section, we prove Theorem~\ref{thm:two-cycles} by using a multigraph analogue of the Kahn--Lov\'{a}sz theorem, which is as follows. We regard two perfect matchings as distinct if they consist of different edge copies, even when they have the same underlying simple matching.

\begin{theorem}\label{thm:multigraph-KL}
    Let $M$ be an $n$-vertex loopless multigraph without isolated vertices. For each $v\in V(M)$, let $\mu_v\defeq \max\{\mu(uv): u\in V(M)\}$ and let $d_v$ be a real number satisfying $d_M(v)\leq d_v$.
    Then we have
    \begin{equation*}
        N_{\mathrm{factor}}(K_2; M) \leq \exp\left( \sum_{v\in V(M)} \frac{2\mu_v}{\sqrt{d_{v}}} \right) \cdot \prod_{v\in V(M)} \left( \frac{d_{v}}{e}\right)^{\frac{1}{2}}.
    \end{equation*}
\end{theorem}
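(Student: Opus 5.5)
We follow Radhakrishnan's entropy proof of the Br\'egman--Minc inequality, in the form adapted to the Kahn--Lov\'asz theorem via ordered pairs of perfect matchings. We may assume $n$ is even and that $M$ has a perfect matching. Let $\mathcal{M}$ be the set of perfect matchings of $M$, with edge copies distinguished, and let $N=|\mathcal{M}|$. Draw $(P_1,P_2)$ uniformly from $\mathcal{M}\times\mathcal{M}$, so that $H(P_1,P_2)=2\log N$.

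The union $P_1\cup P_2$ is a spanning multigraph in which every vertex has degree $2$. It therefore decomposes into double edges (the same edge copy used twice) and even cycles, where a $2$-cycle formed by two distinct parallel copies counts as a cycle. Conversely, a $2$-regular spanning subgraph $Q$ with $c(Q)$ cycles arises from exactly $2^{c(Q)}$ ordered pairs. Thus the pair is equivalent to the random variable $X=(x_v)_{v\in V(M)}$, where $x_v$ is the edge copy used by $P_1$ at $v$ and $y_v$ the one used by $P_2$. Equivalently, $X$ can be viewed as a random orientation in which each vertex $v$ picks an out-edge, namely its $P_1$-partner reached along the cycle direction.

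The Kahn--Lov\'asz entropy argument proceeds as follows. Choose a uniformly random ordering $\sigma$ of $V(M)$ and reveal, vertex by vertex, the $P_1$-edge at $v$. When $v$ is reached, the number of still available choices is at most the number of edge copies at $v$ whose other endpoint has not yet been revealed. Averaging over $\sigma$, and using that in $P_1\cup P_2$ each vertex's relevant set of candidates is reduced in the standard way, gives
\[
  2\log N \le \sum_{v}\frac{1}{d_M(v)}\sum_{k=1}^{d_M(v)}\log k\;+\;\text{(error)},
\]
that is, $N\le\prod_v (d_M(v)!)^{1/(2d_M(v))}\cdot(\text{error})$.

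In the multigraph setting, the step ``the neighbor $u$ of $v$ already revealed removes one choice'' removes only one edge copy, not all $\mu(uv)$ copies. This is the source of the error term. The candidate count at $v$ is governed by the number of distinct neighbors processed, and each processed neighbor eliminates $\mu(uv)$ copies. So when $v$ sits at relative position $t\in[0,1]$ in the ordering, the available count is roughly $d_M(v)(1-t)$, but it is only guaranteed to lie within an additive $\mu_v$ of that value. Concretely, the available count is $d_M(v)$ minus a sum of multiplicities over the earlier neighbors. Conditioning on the matched neighbor being later, this count lies in $[\,j,\ j+\mu_v\,]$-type ranges, so the averaged bound becomes
\[
  \frac{1}{d_v}\int_0^{d_v}\log(s+\mu_v)\,ds.
\]
This expression equals $\log(d_v/e)+O(\mu_v/\sqrt{d_v})$ after splitting the integral at $s=\sqrt{d_v}$. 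On the range $s\ge\sqrt{d_v}$ we use $\log(1+\mu_v/s)\le \mu_v/s$, and the small range contributes $O(\log d_v/\sqrt{d_v})$. Careful bookkeeping should yield exactly $2\mu_v/\sqrt{d_v}$. Halving, because the pair counts twice, produces the exponent $\tfrac12$ together with $\exp(\sum_v 2\mu_v/\sqrt{d_v})$ after dividing by $2$ appropriately. Monotonicity in $d_v\ge d_M(v)$ is handled by noting that $\frac{1}{d}\int_0^{d}\log(s+\mu)\,ds$ is increasing in $d$.

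The main obstacle is carrying out Radhakrishnan's randomized-ordering argument for non-bipartite $M$ with multiplicities. The classical Kahn--Lov\'asz reduction, which passes to pairs of matchings and observes that the cycle structure doubles the count, must be shown to be compatible with the multiplicity loss. The estimate of the available choices given the random order must also be made rigorous: the position of the partner is uniform among the at most $d_M(v)/1$ neighbors, now weighted by multiplicity. I would first attempt the bipartite double cover $M\times K_2$. There $N^2\le \mathrm{per}$ of the symmetric multiplicity matrix, by the Alon--Friedland argument, and I would prove a multigraph Br\'egman bound
\[
  \mathrm{per}(A)\le\prod_v \exp\!\left(\frac{2\mu_v}{\sqrt{d_v}}\right)\frac{d_v}{e}
\]
for nonnegative integer matrices with row sums at most $d_v$ and maximum entry $\mu_v$, using Radhakrishnan's entropy proof. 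Taking square roots then gives the theorem.
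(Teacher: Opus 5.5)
Your final plan is correct, and it takes a genuinely different route from the paper. You pass to the bipartite double cover, using $N_{\mathrm{factor}}(K_2;M)^2\le\mathrm{per}(A)$ for the multiplicity matrix $A$, and then prove a multigraph Br\'egman bound. The Alon--Friedland pairing does extend to multigraphs with copies distinguished. A transposition $(u\,v)$ has weight $\mu(uv)^2$ in $\mathrm{per}(A)$, which matches the $\mu(uv)$ doubled copies plus the $\mu(uv)(\mu(uv)-1)$ ordered pairs of distinct copies. Longer even cycles match as in the simple case.

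The paper never forms pairs. It runs the entropy argument directly on $M$ with uniform labels $\sigma(v)\in[0,1]$. Vertex $v$ contributes only if it is revealed before its partner (probability $\sigma(v)$). A competing copy $vu'$ survives only if both $u'$ and its partner are unrevealed (probability $\sigma(v)^2$). This gives $\int_0^1 x\log(\mu_v+d_vx^2)\,dx$ per vertex. Substituting $y=x^2$, this equals $\frac12\int_0^1\log(\mu_v+d_vy)\,dy$, which is exactly half of your per-row quantity $\frac{1}{d_v}\int_0^{d_v}\log(s+\mu_v)\,ds$. So the two routes produce identical bounds. Yours is more modular: the non-bipartite bookkeeping is absorbed into an elementary pairing identity, and you get a permanent inequality as a by-product. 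The paper's is a single self-contained computation with no pairing step.

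Several parts of your write-up should be replaced.

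\begin{itemize}
\item Discard the middle sketch that reveals only $P_1$-edges against $2\log N$. Exposing $P_1$ accounts for only $\log N$ of entropy, so it does not work.
\item In the bipartite step, the surviving count at row $v$ is not deterministically within $\mu_v$ of $d_M(v)(1-t)$. Given the order, it is a random weighted sum. The correct step is Jensen. Reveal rows in increasing $\sigma$, and let $\pi$ be the random perfect matching of the double cover with $\sigma(v)=1-t$. The expected number of surviving copies at $v$ is $A_{v\pi(v)}+(d_M(v)-A_{v\pi(v)})t\le\mu_v+d_vt$. Hence the contribution of row $v$ is at most $\int_0^1\log(\mu_v+d_vt)\,dt$.
\item No splitting of the integral is needed. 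Exactly, $\int_0^1\log(\mu_v+d_vt)\,dt=\log(d_v/e)+\log(1+\mu_v/d_v)+\frac{\mu_v}{d_v}\log(1+d_v/\mu_v)$. Using $\log(1+x)\le\min(x,\sqrt{x})$, together with $d_v\ge1$ and $\mu_v\ge1$, this is at most $\log(d_v/e)+2\mu_v/\sqrt{d_v}$.
\item Your rough bound of $O(\log d_v/\sqrt{d_v})$ for the small range is not uniform enough as stated. The $\log d_v$ terms have to cancel for the final constant to come out.
\end{itemize}

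After taking square roots you even get the factor $\exp\bigl(\sum_v\mu_v/\sqrt{d_v}\bigr)$. The same improvement is available in the paper's computation.
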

We introduce $d_v$ instead of working directly with $d_M(v)$ because the function
$
x \mapsto \frac{\log x}{2} + \frac{2\mu}{\sqrt{x}}
$
is neither globally increasing nor concave. This lack of monotonicity and concavity makes it difficult to directly apply estimates involving $d_M(v)$ to the Kruskal--Katona-type problem.

We prove Theorem~\ref{thm:multigraph-KL} using the entropy method, following the approach pioneered by Radhakrishnan~\cite{Radhakrishnan} in his proof of Theorem~\ref{thm:bregman-minc}. Our argument is also inspired by the work of Linial and Luria on counting Steiner triple systems and perfect matching decompositions of complete graphs~\cite{Linial-Luria}, as well as Luria's upper bound on the number of perfect matchings in regular hypergraphs~\cite{Luria}.

\begin{proof}[Proof of Theorem~\ref{thm:multigraph-KL}]
    Let $\Psi$ denote the set of perfect matchings in $M$. We plan to estimate the entropy of a perfect matching $\psi$ chosen uniformly at random from $\Psi$. For each vertex $v\in V(M)$, let $Z_v$ be the edge of $\psi$ that covers $v$. Then by the monotonicity of entropy, we have
    \begin{equation*}
        H(\psi) \leq H(Z_v: v\in V(M)).
    \end{equation*}
    Assign independently to each vertex $v\in V(M)$ a random label $\sigma(v)$ uniformly distributed on $[0,1]$. Almost surely, these labels are distinct and hence induce an ordering of $V(M)$.
    Then, by the chain rule for entropy applied to $H(Z_v: v\in V(M))$, we have
    \begin{equation*}
        H(\psi) \leq \sum_{v\in V(M)} H(Z_v| Z_{v'}\text{, $\sigma(v') > \sigma (v)$}).
    \end{equation*}
    Taking expectations over $\sigma$ gives,
    \allowdisplaybreaks
    \begin{align}
        H(\psi) &\leq \mathbb{E}_{\sigma}\left[ \sum_{v\in V(M)} H(Z_v| Z_{v'}\text{, $\sigma(v') > \sigma (v)$}) \right] \nonumber \\
        &= \mathbb{E}_{\sigma}\left[ \sum_{v\in V(M)} \mathbb{E}_{(Z_{v'} \text{, $\sigma(v') > \sigma (v)$})}[ H(Z_v| Z_{v'} = z_{v'}\text{, $\sigma(v') > \sigma (v)$})] \right]. \label{eq:chain}
    \end{align}

    We now fix $\sigma$ and $v\in V(M)$.
    We denote by $N_v$ the number of possible edges to be $Z_v$ under the given previous choices. Then by the maximality of the uniform entropy and \eqref{eq:chain}, we have
    \begin{equation*}
        H(\psi) \leq \mathbb{E}_{\psi} \left[ \sum_{v\in V(M)} \mathbb{E}_{\sigma}[ \log (N_v)] \right].
    \end{equation*}
    Assume an edge $e$ whose endpoints are $u$ and $v$ such that $uv\in E(\psi)$ and $\sigma(u) > \sigma(v)$. Then, when we reach the time that exposes $v$, we already know that $Z_v = e$. Denote $O_v$ by the event that $\sigma(u) < \sigma(v)$. We observe that since $\sigma$ was chosen uniformly at random, if we restrict $\sigma(v)$ to a fixed value, then $\Pr[O_v] = \sigma(v)$. Thus, we have
    \allowdisplaybreaks
    \begin{align}
         H(\psi) &\leq \mathbb{E}_{\psi} \left[ \sum_{v\in V(M)} \mathbb{E}_{\sigma}[ \log (N_v)] \right] \nonumber \\
         &= \mathbb{E}_{\psi} \left[ \sum_{v\in V(M)} \mathbb{E}_{\sigma(v)} \mathbb{E}_{\sigma|\sigma(v)}[\Pr[O_v]\cdot \log (N_v)] \right] \nonumber \\
         &= \mathbb{E}_{\psi} \left[ \sum_{v\in V(M)} \mathbb{E}_{\sigma(v)}[ \sigma(v) \cdot \mathbb{E}_{\sigma|\sigma(v)}[\log (N_v)] ]\right] \nonumber \\
         &\leq \mathbb{E}_{\psi} \left[ \sum_{v\in V(M)} \mathbb{E}_{\sigma(v)}[ \sigma(v) \cdot \log (\mathbb{E}_{\sigma|\sigma(v)}[N_v])] \right] \nonumber \\
         &= \mathbb{E}_{\psi}\left[ \sum_{v\in V(M)} \int_0^1 \sigma(v) \log\left(\mathbb{E}_{\sigma|\sigma(v)}[N_v]\right) d\sigma(v) \right]. \label{eq:after-jensen}
    \end{align}
    The penultimate inequality holds by Jensen's inequality.

    Let $e \in E(\psi)$ be an edge that covers $v$ and let $u$ be the other endpoint of $v$. Then every multiple edge of $uv$ is counted in $N_v$.
    Let $e'$ be an edge of $M$ whose endpoints are $u'$ and $v$, where $u' \neq u$, and let $u''$ be the other endpoint of the edge $Z_{u'}$.
    Observe that $e'$ is counted in $N_v$ only if $\sigma(u'), \sigma(u'') < \sigma(v)$. Thus, by linearity of expectation, it holds that
    \begin{equation}\label{eq:exp-Nv}
        \mathbb{E}_{\sigma|\sigma(v)}[N_v] = \mu(uv) + (d_M(v)-\mu(uv))\cdot \sigma(v)^2 \leq \mu_v + (d_M(v) - \mu_v)\cdot \sigma(v)^2.
    \end{equation}
    By combining \eqref{eq:after-jensen} and \eqref{eq:exp-Nv}, we have
    \allowdisplaybreaks
    \begin{align}
        H(\psi) &\leq \mathbb{E}_{\psi}\left[ \sum_{v\in V(M)} \int_0^1 \sigma(v) \log\left( \mu_v + (d_M(v) - \mu_v)\cdot \sigma(v)^2 \right) d\sigma(v) \right] \nonumber \\
        &= \sum_{v\in V(M)} \int_0^1 [x \log(\mu_v + (d_M(v) - \mu_v)x^2)] dx \nonumber \\
        &\leq \sum_{v\in V(M)} \int_0^1 [x \log(\mu_v + d_v x^2)] dx. \label{eq:integral-form}
    \end{align}
    For $a,b>0$, a direct calculation gives
    \begin{equation*}
        \int_0^1 [x \log(a + bx^2)]dx = \frac{(a+b)\log(a+b) - a\log a - b}{2b}.
    \end{equation*}
    Thus, for each $v\in V(M)$, we have
    \allowdisplaybreaks
    \begin{align}
         \int_0^1 [x \log(\mu_v + d_v x^2)] dx &= \frac{(d_v + \mu_v) \log(d_v + \mu_v) - \mu_v \log \mu_v}{2 d_v} - \frac{1}{2} \nonumber \\
         &\leq \frac{1}{2}\cdot \left( \left(1 + \frac{\mu_v}{d_v}\right)\left(\log d_v + \log\left(1 + \frac{\mu_v}{d_v}\right)\right) - 1 \right) \nonumber \\
         &\leq \frac{1}{2}\cdot \left( \log d_v + \frac{\mu_v}{\sqrt{d_v}} + \frac{2\mu_v}{d_v} - 1 \right) \nonumber \\
         &\leq \frac{1}{2} \cdot \log \left(\frac{d_v}{e} \right) + \frac{2\mu_v}{\sqrt{d_v}}. \label{eq:integral-estimate}
    \end{align}
    The penultimate inequality holds since $\log x \leq \sqrt{x}$ and $\log(1 + x) \leq x$ for all $x > 0$.

    Then by \eqref{eq:integral-form} and \eqref{eq:integral-estimate},
    \begin{equation*}
        H(\psi) \leq \frac{1}{2} \cdot \log \left( \prod_{v\in V(M)} \frac{d_v}{e} \right) + \sum_{v\in V(M)} \frac{2\mu_v}{\sqrt{d_v}}. 
    \end{equation*}
    
    Since $\psi$ is chosen uniformly, we have $\log|\Psi| = H(\psi)$. Thus, 
    \begin{equation*}
        N_{\mathrm{factor}}(K_2; M) = |\Psi| \leq \exp\left( \sum_{v\in V(M)} \frac{2\mu_v}{\sqrt{d_v}} \right) \cdot \prod_{v\in V(M)} \left( \frac{d_v}{e}\right)^{\frac{1}{2}}.
    \end{equation*}
    This completes the proof.
\end{proof}

As a direct corollary, we have the following.

\begin{corollary}\label{cor:multigraph-KK}
    For all $\mu \geq 1$ and  $\ve > 0$, there exists $C = C(\mu, \ve) > 0$ such that the following holds. Let $M$ be an $n$-vertex $m$-edge loopless multigraph where $m \geq C n$ and $\mu(uv) \leq \mu$ for all $u, v \in V(M)$. Then
    \begin{equation*}
        N_{\mathrm{factor}}(K_2; M) \leq \left( (1 + \ve) \frac{2m}{e n} \right)^{\frac{n}{2}}.
    \end{equation*}
\end{corollary}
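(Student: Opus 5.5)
We derive the statement from Theorem~\ref{thm:multigraph-KL} by choosing the parameters $d_v$ carefully, since we want the bound to depend only on the average degree $2m/n$.

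We may assume $M$ has a perfect matching, so $n$ is even and $M$ has no isolated vertices. Set $\bar d \defeq 2m/n \geq 2C$. The natural choice $d_v = d_M(v)$ fails in two ways. First, low-degree vertices make the error term $2\mu_v/\sqrt{d_v}$ large. Second, the function $x\mapsto \tfrac12\log x + 2\mu/\sqrt{x}$ is not concave, so we cannot pass directly to the average. To avoid both problems, we choose
\[
d_v \defeq d_M(v) + \delta\bar d,
\]
where $\delta=\delta(\ve)>0$ is a small constant. This is a valid choice, since $d_M(v)\le d_v$.

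Theorem~\ref{thm:multigraph-KL} together with $\mu_v\le\mu$ then gives
\[
N_{\mathrm{factor}}(K_2;M)\le \exp\Big(\sum_v \frac{2\mu}{\sqrt{\delta \bar d}}\Big)\prod_v\Big(\frac{d_M(v)+\delta\bar d}{e}\Big)^{1/2}.
\]
For the exponential factor, since $\bar d\ge 2C$, we get
\[
\exp\big(2\mu n/\sqrt{2\delta C}\big)\le (1+\ve/3)^{n/2},
\]
provided $C$ is chosen large in terms of $\mu$, $\ve$ and $\delta$.

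For the product, we apply AM--GM to obtain
\[
\prod_v (d_M(v)+\delta\bar d)\le \Big(\frac1n\sum_v (d_M(v)+\delta \bar d)\Big)^n=\big((1+\delta)\bar d\big)^n.
\]
Substituting, the bound becomes
\[
N_{\mathrm{factor}}(K_2;M)\le\Big((1+\ve/3)(1+\delta)\frac{2m}{en}\Big)^{n/2}.
\]

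It remains to fix the constants. We take $\delta=\ve/3$, so that $(1+\ve/3)^2\le 1+\ve$ for $\ve\le1$. We then take $C=C(\mu,\ve)$ large enough for the exponential estimate above to hold. This gives the claimed bound.

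The only delicate point is the choice of $d_v$. The additive shift $\delta \bar d$ controls the error term uniformly, because every $d_v\ge\delta\bar d$, and it also keeps the main term linear in the degrees so that AM--GM applies. Beyond that, the argument is routine bookkeeping of constants.
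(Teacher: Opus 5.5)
Your proof is correct. It shares the paper's overall plan: apply Theorem~\ref{thm:multigraph-KL} with shifted parameters $d_v \ge d_M(v)$, then pass to the average degree. The difference is in how that averaging step is done.

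The paper uses the constant shift $d_v = d_M(v) + 9\mu^2$. With this choice the per-vertex error $2\mu_v/\sqrt{d_v}$ is only bounded by $2/3$, which is not small at low-degree vertices. So the paper keeps error and main term together in $f(x) = \frac{\log x - 1}{2} + \frac{2\mu}{\sqrt{x}}$. It then checks that $f$ is concave on $[9\mu^2,\infty)$ and applies Jensen.

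Your shift $\delta\bar d$ is proportional to the average degree, which forces every $d_v \ge \delta\bar d$. The error term is then uniformly small and can be bounded vertex by vertex. The main term is a product of linear factors, so plain AM--GM suffices and no concavity computation is needed.

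The cost is the extra multiplicative factor $(1+\delta)$ in the main term. Even with $\delta$ optimized, your route gives a relative error of order $(\mu^2/\bar d)^{1/3}$, so $C$ of order $\mu^2/\ve^3$. The paper's constant shift gives $O(\mu/\sqrt{\bar d})$, so $C$ of order $\mu^2/\ve^2$. For the application to Theorem~\ref{thm:two-cycles} this difference is immaterial.

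Your restriction to $\ve \le 1$ is harmless, since the statement only gets weaker as $\ve$ grows.
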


\begin{proof}[Proof of Corollary~\ref{cor:multigraph-KK}]
    We fix parameters as 
    $$
        0 < \frac{1}{C} \ll \ve' \ll \frac{1}{\mu}, \ve \leq 1.
    $$

    If $M$ has an isolated vertex, then trivially we obtain the desired inequality. Thus, we may assume that $M$ has no isolated vertices.
    
    Let $f: \mathbb{R}_{\geq 1} \to \mathbb{R}$ be a function such that $f(x) = \frac{\log x - 1}{2} + \frac{2\mu}{\sqrt{x}}$.
    Denote $\Psi$ by the set of perfect matchings in $M$. Then by Theorem~\ref{thm:multigraph-KL} with $d_v = d_M(v) + 9\mu^2$ for each $v\in V(M)$, we have
    \begin{equation}\label{eq:f(dMv+9mu2)}
        \log |\Psi| \leq \frac{1}{2} \cdot \sum_{v\in V(M)} (\log (d_M(v) + 9\mu^2) - 1)  + \sum_{v\in V(M)} \frac{2\mu}{\sqrt{d_M(v) + 9\mu^2}} = \sum_{v\in V(M)} f(d_M(v) + 9\mu^2).
    \end{equation}

    Since $f''(x) = \frac{3\mu - \sqrt{x}}{2 x^2\sqrt{x}}$, the function $f$ is concave on the interval $[9\mu^2, \infty)$. Since $d_M(v) \geq 1$ and $\sum_{v\in V(M)} d_M(v)  = 2m$, the right-hand side of \eqref{eq:f(dMv+9mu2)} is maximized when we replace $d_M(v) + 9\mu^2$ by $2m/n + 9\mu^2$ by Jensen's inequality.
    As $2m / n \geq 2C$, which is sufficiently large, we have
    \allowdisplaybreaks
    \begin{align*}
        \log |\Psi| &\leq n \cdot f\left( \frac{2m}{n} + 9\mu^2 \right) \\
        &\leq \frac{n}{2} \log \left( \frac{2m/n + 9\mu^2}{e} \right) + \frac{2\mu n}{\sqrt{2m/n}} \\
        &\leq \frac{n}{2} \log \left((1 + \ve') \frac{2m}{en} \right) + \ve' n \\
        &\leq \frac{n}{2} \log\left( (1 + \ve) \frac{2m}{en} \right).
    \end{align*}
    Thus, we obtain that
    $
        |\Psi| \leq \left( (1 + \ve) \frac{2m}{e n} \right)^{n/2}.
    $
    This completes the proof.
\end{proof}

We are now ready to prove Theorem~\ref{thm:two-cycles}.

\begin{proof}[Proof of Theorem~\ref{thm:two-cycles}]
    Let $n, \ell \geq 2$ be integers such that $n$ is divisible by $2\ell$. Let $F$ be a connected graph on $2\ell$ vertices that contains $2 \cdot C_{\ell}$ as a spanning subgraph.
    Let $W_\ell$ be the graph obtained from two vertex-disjoint copies of $C_\ell$ by adding one edge joining the two cycles. Since $F$ is connected, $W_{\ell}$ is a spanning subgraph of $F$. 
    The case $\ell=2$ is almost identical. The only difference is that $|\Aut(W_2)|=2$ and $|\Aut(C_2)|=2$, whereas $|\Aut(W_{\ell})|=8$ and $|\Aut(C_{\ell})|=2\ell$ for every $\ell \geq 3$. We therefore prove only the case $\ell \geq 3$.

    We now fix parameters as follows.
    $$
        0 < \frac{1}{C} \ll \frac{1}{\ell}, \ve \leq 1.
    $$

    Let $G$ be an $n$-vertex graph on $m$ edges with $m \geq C n$. Let $\calC$ and $\calW$ be the set of $C_{\ell}$-factors and $W_{\ell}$-factors in $G$, respectively. 
    Since $C_{\ell}$ is obviously Hamiltonian and $|\Aut(C_{\ell})| = 2\ell$, by Corollary~\ref{cor:factor-kruskal-katona}, it holds that
    \begin{equation}\label{eq:calc-upper}
        |\calC| \leq \left( \frac{(1 + \ve)}{2^{1/(\ell - 1)}} \cdot \frac{2m}{en} \right)^{\left( 1 - \frac{1}{\ell} \right)n}.
    \end{equation}

    We fix a $C_{\ell}$-factor $H\in \calC$ and enumerate each $C_{\ell}$ in $H$ as $C_{\ell}^{(1)}, \dots, C_{\ell}^{(n/\ell)}$. 
    We define an auxiliary multigraph $M_H$ on the vertex set $\{C_{\ell}^{(1)}, \dots, C_{\ell}^{(n/\ell)}\}$ such that $\mu(C_{\ell}^{(i)}, C_{\ell}^{(j)})$ be the number of crossing edges between $C_{\ell}^{(i)}$ and $C_{\ell}^{(j)}$ for each distinct $i, j \in [n/\ell]$. Then the number of perfect matchings in $M_H$ is exactly the number of distinct ways to extend $H$ to a $W_{\ell}$-factor. Also, for a given $W_{\ell}$-factor, it is always obtained by this process by decomposing each $W_{\ell}$ in the $W_{\ell}$-factor into $2\cdot C_{\ell}$ and an edge. Thus, we summarize as follows.
    \begin{equation}\label{eq:ClWl-identity}
        |\calW| = \sum_{H\in \calC} N_{\mathrm{factor}}(K_2; M_H).
    \end{equation}

    By our construction of $M_H$, we observe that
    \begin{equation*}
        \frac{m}{2} \leq m - \binom{\ell}{2} \cdot \frac{n}{\ell} \leq |E(M_H)| \leq m.
    \end{equation*}
    Also, the multiplicity of each edge of $M_H$ is bounded above by $\ell^2$.
    Therefore, by Corollary~\ref{cor:multigraph-KK}, we have
    \begin{equation}\label{eq:pm-in-Mh}
        N_{\mathrm{factor}}(K_2; M_H) \leq \left( (1 + \ve) \frac{2 \ell m}{e n} \right)^{\frac{n}{2\ell}},
    \end{equation}
    for each $H\in \calC$.
    By combining \eqref{eq:calc-upper}, \eqref{eq:ClWl-identity}, and \eqref{eq:pm-in-Mh}, we have
    \allowdisplaybreaks
    \begin{align*}
         |\calW| &= \sum_{H\in \calC} N_{\mathrm{factor}}(K_2; M_H) \\
         &\leq |\calC| \cdot \left( (1 + \ve) \frac{2 \ell m}{e n} \right)^{\frac{n}{2\ell}} \\
         &\leq \left( \frac{(1 + \ve)}{2^{1/(\ell - 1)}} \cdot \frac{2m}{en} \right)^{\left( 1 - \frac{1}{\ell} \right)n} \cdot  \left( (1 + \ve) \frac{2 \ell m}{e n} \right)^{\frac{n}{2\ell}} \\
         &\leq 2^{-\frac{n}{\ell}} \cdot \ell^{\frac{n}{2\ell}} \cdot \left((1 + \ve) \frac{2m}{en} \right)^{\left( 1 - \frac{1}{2\ell} \right)n}.
    \end{align*}
    Thus, by Lemma~\ref{lem:aut-double-counting}, we have 
    \allowdisplaybreaks
    \begin{align*}
        N_{\mathrm{factor}}(F; G) &\leq \left( \frac{|\Aut(W_{\ell})|}{|\Aut(F)|} \right)^{\frac{n}{2\ell}} |\calW| \\
        &\leq |\Aut(F)|^{-\frac{n}{2\ell}} \cdot 2^{\frac{3n}{2\ell}} \cdot 2^{-\frac{n}{\ell}} \cdot \ell^{\frac{n}{2\ell}} \cdot \left((1 + \ve) \frac{2m}{en} \right)^{\left( 1 - \frac{1}{2\ell} \right)n} \\
        &\leq \left( \frac{2\ell}{|\Aut(F)|} \right)^{\frac{n}{2\ell}} \cdot \left((1 + \ve) \frac{2m}{en} \right)^{\left( 1 - \frac{1}{2\ell} \right)n} \\
        &= \left[(1 + \ve) \left( \frac{|V(F)|}{|\mathrm{Aut}(F)|} \right)^{\frac{1}{|V(F)| - 1}} \frac{2m}{e n} \right]^{\left(1 - \frac{1}{|V(F)|}\right) n}.
    \end{align*}
    This completes the proof.
\end{proof}


\section{Concluding Remarks}\label{sec:concluding}
In this paper, we study the number $N_{\mathrm{factor}}(F; G)$ for connected graphs $F$. In particular, we establish an asymptotically sharp Kahn--Lov\'{a}sz-type inequality for every Hamiltonian graph $F$. As a direct consequence, we also obtain asymptotically sharp Kruskal--Katona-type inequalities for $F$-factors.
On the other hand, as shown in Section~\ref{sec:lowerbounds}, the behavior can be substantially different for certain connected non-Hamiltonian graphs $F$. Although Theorem~\ref{thm:general-factors} provides a Kahn--Lov\'{a}sz-type inequality for every connected graph $F$, there remains a considerable gap between the upper and lower bounds in general, both for the degree-sequence setting and for the corresponding Kruskal--Katona-type problem. This naturally leads to the following problem.

\begin{problem}\label{prob:kruskal-katona-type}
    Determine $N_F(n,m)$ up to a subexponential multiplicative factor for every connected graph $F$. 
\end{problem}

We note that the asymptotically sharp lower-bound constructions for Theorem~\ref{thm:main-factors} and Corollary~\ref{cor:factor-kruskal-katona}, which concern Hamiltonian graphs $F$, are given by disjoint unions of cliques, as described in Section~\ref{sec:lowerbounds}. We believe that the same phenomenon should hold more generally for every connected graph $F$ admitting a perfect fractional matching. We conclude the paper with the following conjecture.

\begin{conjecture}\label{conj:KL-pfm}
    For every $\ve > 0$ and a connected graph $F$ admitting a perfect fractional matching, there exists a constant $C = C(F, \ve) > 0$ such that the following holds. For all $n$-vertex graph $G$, we have
    \begin{equation*}
        N_{\mathrm{factor}}(F; G) \leq \left( (1  + \ve) \frac{|V(F)|}{|\mathrm{Aut}(F)|} \right)^{\frac{n}{|V(F)|}} \cdot \left( \prod_{v\in V(G)} \frac{d_G(v) + C}{e} \right)^{1 - \frac{1}{|V(F)|}}.
    \end{equation*} 
\end{conjecture}


\vspace{-0.2cm}

\providecommand{\MR}[1]{}
\providecommand{\MRhref}[2]{%
  \href{http://www.ams.org/mathscinet-getitem?mr=#1}{#2}
}

    \bibliographystyle{amsplain_initials_nobysame}
    \addcontentsline{toc}{section}{References}
    \bibliography{bibfile}


\end{document}